\newcommand{\vx}{\bm{x}}
\newcommand{\vy}{\bm{y}}
\newcommand{\vp}{\bm{p}}
\newcommand{\vn}{\bm{n}}
\newcommand{\vvE}{\vec{\bm{E}}}
\newcommand{\vvPi}{\vec{\bm{\Pi}}}
\newcommand{\vvx}{\vec{\bm{x}}}
\newcommand{\vvy}{\vec{\bm{y}}}
\newcommand{\vvp}{\vec{\bm{p}}}
\newcommand{\vvr}{\vec{\bm{r}}}
\newcommand{\mE}{\mathbb{E}}
\newcommand{\mG}{\mathbb{G}}
\newcommand{\mI}{\mathbb{I}}
\newcommand{\mP}{\mathbb{P}}
\newcommand{\mQ}{\mathbb{Q}}
\newcommand{\mH}{\mathbb{H}}
\newcommand{\bGa}{\bm{\alpha}}
\newcommand{\cA}{\mathcal{A}}
\newcommand{\cI}{\vec{\mathcal{I}}_{\text{KM}}}
\newcommand{\cO}{\mathcal{O}}
\newcommand{\bbI}{\mathbb{I}_{\text{KM}}}
\newcommand{\tr}{\top}
\newcommand{\mes}{\operatorname{mes}}
\newcommand{\md}{\mathrm{d}}
\newcommand{\mi}{\mathrm{i}}
\newcommand{\real}{\mathbb{R}}
\newcommand{\complex}{\mathbb{C}}
\renewcommand{\Re}{\text{Re}\,}
\newcommand{\Mb}[1]{\left[{#1}\right]}
\newcommand{\Mcb}[1]{\left\{{#1}\right\}}
\newtheorem{proposition}{Proposition}[section]
\newtheorem{remark}{Remark}[section]
\newtheorem{lemma}{Lemma}[section]
\DeclareMathOperator{\cond}{cond}
\newcommand{\diver}{\operatorname{div}}
\newcommand{\bcurl}{\operatorname{\bf curl}}
\begin{document}
\title{Imaging polarizable dipoles}
\author{Maxence Cassier \and Fernando Guevara Vasquez}
\address{Mathematics Department, University of Utah, Salt Lake City UT
84112}
\email{cassier@math.utah.edu}
\address{Mathematics Department, University of Utah, Salt Lake City UT
84112}
\email{fguevara@math.utah.edu}
\subjclass[2000]{35R30, 78A46}
\keywords{Kirchhoff migration, Maxwell equations, polarizability tensor,
polarization vector}
\begin{abstract}
We present a method for imaging the polarization vector of an electric
dipole distribution in a homogeneous medium from measurements of the
electric field made at a passive array. We study an electromagnetic
version of Kirchhoff imaging and prove, in the Fraunhofer asymptotic
regime, that range and cross-range resolution estimates are identical to
those in acoustics. Our asymptotic analysis provides error estimates for
the cross-range dipole orientation reconstruction and shows that the
range component of the dipole orientation is lost in this regime. A
naive generalization of the Kirchhoff imaging function is afflicted by
oscillatory artifacts in range, that we characterize and correct.  We
also consider the active imaging problem which consists in imaging both
the position and polarizability tensors of small scatterers in the
medium using an array of collocated sources and receivers. As in the
passive array case, we provide resolution estimates that are consistent
with the acoustic case and give error estimates for the cross-range
entries of the polarizability tensor. Our theoretical results are
illustrated by numerical experiments.
\end{abstract}

\maketitle

\tableofcontents

\section{Introduction}

The behavior of small scatterers in a homogeneous medium and subject to an
electromagnetic field can be well described by a polarizability tensor
(see e.g.  \cite{Novotny:2012:PNO}), which characterizes the effective
response of the scatterer to the incident electric field. We present here
an imaging method that produces tensor valued images of the
polarizability tensor in a medium, by using measurements made at an
array of collocated sources and receivers. This is {\em active imaging}
because the array generates waves to probe the medium and could have
applications to radar \cite{Cheney:2009:FRI}. We also consider
the {\em passive imaging} problem which consists in imaging the location
and polarization vector of small electromagnetic sources within the
medium, from measurements of their emitted electrical field on a
(passive) array of receivers.

The imaging technique we use here is an analogue of Kirchhoff imaging
(see e.g. \cite{Blei:2013:MSI}) in electromagnetics. This a
well-understood technique in acoustics for both passive and active
imaging. To our knowledge Kirchhoff imaging has not been thoroughly
studied for electromagnetics. In acoustics, the Kirchhoff image
resolution in a plane parallel to the array (i.e. the cross-range) is
given by the Rayleigh criterion $\lambda L / a$, where $\lambda$ is the
wavelength, $L$ is the distance between the array and the imaging plane
and $a$ is array aperture (or diameter). The resolution in depth (in the
range direction, perpendicular to the array) is governed by the
frequency bandwidth $B$ of the measurements and is given by $c/B$, where
$c$ is wave velocity in the medium. For a mathematical derivation of
these resolution results see e.g. \cite{Borcea:2007:OWD}.

One could view the active imaging problem we consider as an inverse
medium problem. In the full aperture case (i.e. when the array
completely surrounds the medium) the inverse medium problem is known to
have a unique solution for the Maxwell equations
\cite{Ola:1993:IBV,Colton:2013:IAE} and linearized approaches (i.e.
based on the Born approximation) exist, see e.g.
\cite{Devaney:2012:MFI}. The passive problem for electromagnetics has
been studied in \cite{Marengo:1999:ISP}. To our knowledge, this work
presents the first error estimates for reconstruction of polarization
vectors (for the passive problem) and polarizability tensors (for the
active problem).

Another way of approaching the active imaging problem for small
inclusions is to study the behavior of the measurements as the size of
the inclusions is driven to zero. This was done for the conductivity
equation in \cite{Ced::ICI:1998} and was used to image small
inhomogeneities in a known conductive medium in e.g.
\cite{Ced::ICI:1998,Bruhl:DIT:2003}.  It was demonstrated that
information about the shape of the inclusions (namely an ellipsoid
approximation) can be obtained with this method \cite{Ced::ICI:1998}.
This asymptotic expansion has been carried out for other equations, e.g.
for the Maxwell equations \cite{Vog::AFP:2000}.  The asymptotic
expansion can actually be carried further, revealing the so-called
generalized polarizability tensors \cite{Ammari::2003} which encode more
information about the shape of the inclusions \cite{Ammari::2007}.  This
approach has been applied to acoustics \cite{Ammari::2013} and in the
context of electromagnetism to detect small dielectric inhomogeneities
characterized by a contrast in permittivity and permeability (see
\cite{Am::AFP:2001, Ammari::2003:ESD, Vog::AFP:2000} for the asymptotic
analysis and \cite{Ammari::2014:TCE} for the imaging application). We
emphasize that in our approach we are not after the geometric
properties of each inclusion separately. We only seek the first
polarization tensor (or matrix\footnote{Since all tensors we consider
are matrices, we interchangeably use ``tensor'' and ``matrix''.}) at an imaging point, whether there is a
scatterer or not, and study the resolution and accuracy of this image.

Our paper is organized as follows. The passive imaging problem is
studied in section~\ref{sec:passive}. The active case is considered in
section~\ref{sec:active}. We conclude with a brief summary and future in
section~\ref{sec:future}.

\section{The passive imaging problem}
\label{sec:passive}
It is convenient to use the dyadic Green function to
represent Maxwell equation solutions, so we recall it in
section~\ref{sec:dyadic:green}. Then in section~\ref{sec:math:form} we
define the passive imaging problem and the data we use to image. Our
analysis relies on the Fraunhofer asymptotic regime which is detailed
in section~\ref{sub.asympfraun}. The generalization of the Kirchhoff
imaging function to a vector valued image is explained in
section~\ref{sec:kirch}. The remaining sections include the analysis of
different aspects of this imaging function. The resolution estimate of the position in a
plane parallel to the array (cross-range direction) is done in
section~\ref{sec:crossrange:passive}. The extraction of the dipole
polarization vector data from the vector valued Kirchhoff image is done
in section~\ref{sec:crossrangep}. Section~\ref{sec:range:passive}
includes the resolution of the position in the range direction, i.e. the direction
perpendicular to the array, and section~\ref{sec:rangepol:passive}
studies the variation in range of the image and shows how to mitigate
oscillations in range of the reconstructed polarization vector. We
complement the theory with numerical experiments in
section~\ref{sec:numexp:passive2}.

\subsection{The dyadic Green function}
\label{sec:dyadic:green}
We consider an electromagnetic homogeneous medium characterized by its dielectric permittivity $\varepsilon$ and magnetic permeability $\mu$ which define its constant speed of propagation
$c=(\varepsilon \mu)^{-1/2}$. We denote by $\omega$ the angular
frequency and $k=\omega/c$ the wavenumber and by $\mG(\vvx,\vvy;k)$  the
matrix valued Green function associated with the time-harmonic Maxwell
equations, i.e. the solution to
\begin{equation}\label{eq.greentensor}
\bcurl_{\vvx}  \, \bcurl_{\vvx}  \, \mG(\vvx,\vvy;k)-  k^2
\mG(\vvx,\vvy;k)= \mI  \, \delta_{\vvy},
\end{equation}
where $\mI$ stands for the $3\times 3$ identity matrix, $ \delta_{\vvy}$
is the Dirac distribution at $\vvx=\vvy$ and $\bcurl_{\vvx}$ for the
curl operator of a $3\times3$ matrix, defined as the curl of its three
columns with respect to the $\vvx$ variable.  This so-called dyadic
Green function $\mG(\vvx,\vvy;k)$, is given by (see e.g.
\cite{Novotny:2012:PNO})
\begin{equation}\label{eq.defgreendyadic}
 \mG(\vvx,\vvy;k) = G(\vvx,\vvy;k) \Mb{ (1+m(kr)) \mI - (1+3m(kr))
 \frac{\vvr \vvr^\tr}{r^2} },
\end{equation}
where $\vvr = \vvx - \vvy$, $r = \|\vvr\|$,  $m(kr)\equiv (\mi kr -
1)/(kr)^2$ and $G(\vvx,\vvy;k)$ is the acoustic Green function in three
dimensions
\[
 G(\vvx,\vvy;k) = \frac{\exp[\mi k\|\vvx-\vvy\|]}{4\pi \|\vvx-\vvy\|}.
\]
The dyadic Green function $\mG(\vvx,\vvy;k)$ is diagonalizable in an
orthonormal basis because it is a normal
matrix (i.e. $\mG(\vvx,\vvy;k)$ commutes with $\mG(\vvx,\vvy;k)^* =
\overline{\mG}(\vvx,\vvy;k)$). Its eigenvalues are $\lambda_1(k,r)= -2
\,m(kr) \, G(\vvx,\vvy;k)$ (with multiplicity 1 and corresponding
eigenvector $\vvr/r$) and $\lambda_2(k,r)=\, (1+m(kr)) \,
G(\vvx,\vvy;k)$ (with multiplicity 2 and eigenspace being the orthogonal
of $\vvr$).
We note that the largest eigenvalue in magnitude of $\mG(\vvx,\vvy;k)$
is $\lambda_2(k,r)$ when $kr \gg 1$, and thus the Green function
becomes increasingly ill-conditioned  as $kr \to \infty$:
\[
\cond(\mG(\vvx,\vvy;k))=\frac{ \|\lambda_2(k,r) \|}{ \|\lambda_1(k,r)
\|}=\left\|\frac{m(kr)+1}{2m(kr)}\right\|=\frac{kr}{2}+\cO(1).
\]
\subsection{Mathematical formulation of the passive imaging problem}
\label{sec:math:form}

We denote by $a$ the characteristic size of an array $\cA$ of
collocated transmitters and receivers.  For example a square array in
the $z=0$ plane of side $a$ is given by $\cA = \{ \vx_r \in
\real^2~|~ \| \vx_r \|_\infty \leq a/2 \}$. Note that we drop the arrow
for two dimensional vectors, i.e. $\vx_r \in \real^2$ denotes the two first
components of $\vvx_r \in \real^3$.

We consider a family of $N$ radiating electric dipoles located at
$\vvy_1, \vvy_2,\ldots, \vy_N$ with respective polarization vectors (also called dipole moments)
$\vvp_1, \vvp_2,\ldots, \vvp_N$ (see figure \ref{fig.med}). 

\begin{figure}[!ht]
\begin{center}
 \includegraphics[width=0.65\textwidth]{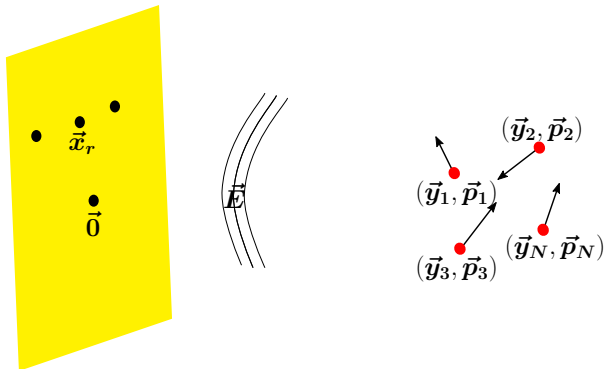}
\end{center}
 \caption{Description of the passive imaging problem.}
 \label{fig.med}
\end{figure}

The electric field (see \cite{Novotny:2012:PNO}) emitted by
this family of dipoles is a solution to the following time-harmonic equation on $\real^3$
\[
\bcurl  \, \bcurl  \, \vvE-  k^2 \vvE=  \mu  \, \omega^2 \sum_{n=1}^{N}   \vvp_n \, \delta_{\vvy_n}.
\]
and thus can be expressed by virtue of \eqref{eq.greentensor} in terms
of dyadic Green functions
\[
 \vvE(\vvx;k) = \mu \, \omega^2\sum_{j=1}^{N} \mG(\vvx,\vvy_j;k)\,  \vvp_j.
\]
The {\em passive imaging problem} consists in finding the positions
$\vvy_1, \ldots, \vvy_N$ and polarizations $\vvp_{1}, \ldots, \vvp_{N}$
of  the electric dipoles from the data
\begin{equation}\label{eq.data}
 \vvPi(\vx_r;k) =  \mu \, \omega^2\sum_{j=1}^{N} \mG((\vx_r,0),\vvy_j;k)\,  \vvp_j ,
\end{equation}
for $\vx_r \in \cA \subset \real^2$, in other words full measurements of
the electrical field on the array $\cA$.  This is the best case
scenario, as it assumes we can measure all three components of the
electric field at each point of the array.

\subsection{The dyadic Green function in the Fraunhofer asymptotic
regime}
\label{sub.asympfraun}

Let $\vvy=(\vy, L+\eta)$ and $\vvx_r=(\vx_r,0)$ be respectively an
imaging point and a receiver. The characteristic propagation distance
here is $L$. We assume that the scatterers lie in a known imaging window
(see figure \ref{fig.Frau}) of characteristic size $b$ in cross-range
and $h$ in range, i.e. $\| \vy \|= \cO(b) ~\text{and}~  | \eta |=\cO
(h)$. We assume we are in the Fraunhofer asymptotic regime, i.e. that
the following scalings hold (see e.g.
\cite{Borcea:2008:EII,Born:1959:POE}).
\begin{itemize}
\item  $kL \gg 1$, (high frequency or large propagation distance)
\item
Fresnel number $\Theta_a \equiv \displaystyle \frac{k a^2}{L}\ll k L$,
i.e. small aperture: $a\ll L$,
\item Fresnel number $\Theta_b \equiv \displaystyle \frac{k b^2}{L}\ll kL$,
i.e. small imaging window in cross-range: $b\ll L,$
\item Fresnel number $\Theta_h \equiv \displaystyle \frac{k h^2}{L}\ll kL$, i.e.  small
imaging window in range $h\ll L$.
\end{itemize}
Moreover we assume that
\[
 \Theta_{b} \ll 1,  ~ 
  ~ 1 \ll \Theta_{a} \ll
 \frac{L^2}{a^2 }, \mbox{ and }   kh=\cO( 1 ),
\]
which amount to assume that the imaging window is small compared to the
array aperture, i.e. $b\ll a$.

\begin{figure}[!ht]
\begin{center}
 \includegraphics[width=0.65\textwidth]{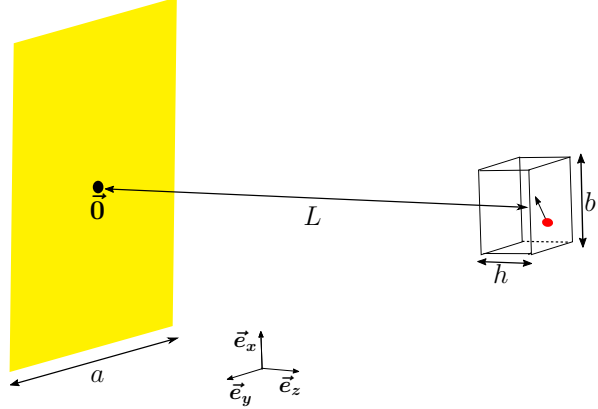}
\end{center}
 \caption{Characteristic lengths involved in the Fraunhofer asymptotic
 regime.}
 \label{fig.Frau}
\end{figure}

We first approximate the acoustic Green function $G(\vvx_r,\vvy;k)$ as
in \cite{Borcea:2008:EII} by expanding the distance $\|\vvx_r-\vvy\|$:
\begin{eqnarray}\label{eq.ampl}
\displaystyle \|\vvx_r-\vvy\|&=& L  \left(\frac{\|\vx_r-\vy\|^2}{L^2}+\left(1+\frac{\eta}{L}\right)^2\right)^{\frac{1}{2}}\nonumber \\
&=& L \left(1+\cO\left(\frac{a^2}{L^2}\right)\right)\nonumber  \\
&=& L  \left(1+o(1)\right),
\end{eqnarray}
since $kh=\cO( 1 )$ implies that
$\cO(h/L)=\cO\left(1/(kL)\right)=\cO\left(a^2/(\Theta_a
L^2)\right)=o\left(a^2/L^2\right)$.

Expanding the phase leads to
\begin{eqnarray}\label{eq.phase}
\displaystyle k \|\vvx_r-\vvy\|&=&k L \left(1+
\frac{\|\vx_r\|^2}{L^2}+\frac{2\vx_r\cdot\vy}{L^2}+\frac{2\eta}{L}
+\cO\left(\frac{b^2}{L^2}\right)+\cO\left(\frac{h^2}{L^2}\right)\right)^\frac{1}{2}\nonumber \\
&=& k L+\frac{k\|\vx_r\|^2}{2
L}+\frac{k\vx_r\cdot\vy}{L}+k\eta+\cO\left(\Theta_b\right)+\cO\left(\frac{a^2\Theta_a}{L^2}\right) \nonumber\\
&=& k L+\frac{k\|\vx_r\|^2}{2 L}+\frac{k\vx_r\cdot\vy}{L}+k\eta +o(1),
\end{eqnarray}
since $kh=\cO (1)$ implies that $\displaystyle \cO(h\Theta_a/ L )=\cO(a^2/L^2)=o(a^2\Theta_a/L^2)$ and $\cO(\Theta_h)=\cO(h/L)=o(a^2/L^2)$. 

Using (\ref{eq.ampl}) and (\ref{eq.phase}), we get the following
expansion for the acoustic Green function
\begin{eqnarray}\label{eq.greenpara}
G(\vvx_r, \vvy;k)&=&\widetilde{G}(\vvx_r, \vvy;k) \left( 1+\cO\left(\frac{a^2\Theta_a}{L^2}\right)+\cO\left(\Theta_b\right)\right) \nonumber\\
&=&  \widetilde{G}(\vvx_r, \vvy;k)  ~ (1+o(1)),
\end{eqnarray}
where the  Fraunhofer or paraxial approximation of the acoustic Green
function is
\[
\widetilde{G}(\vvx_r, \vvy;k) \equiv \frac{1}{4\pi L}  ~  \exp\Mb{\mi ( k
L+\frac{k\|\vx_r\|^2}{2 L}+\frac{k\vx_r\cdot\vy}{L}+k\eta )}.
\]
Using that $m(kr)= \cO(1/kL)$, the dyadic Green function
\eqref{eq.defgreendyadic} becomes
\begin{equation}\label{eq.dyadic}
\mG(\vvx_r, \vvy;k)=G(\vvx_r, \vvy;k)  \left(\mP(\vvx_r,\vvy)+
\cO\left(\frac{1}{kL}\right) \right),
\end{equation}
where we used the orthogonal projector $\mP(\vvx_r,\vvy)$ defined by
\[
\mP(\vvx_r,\vvy)=\mI-\frac{(\vvx_r-\vvy)(\vvx_r-\vvy)^{\tr}}{\|\vvx_r-\vvy\|^2}.
\]
Finally, combining the relations (\ref{eq.greenpara}) and (\ref{eq.dyadic}), we obtain the asymptotic of the dyadic Green function in the Fraunhofer regime:
\begin{eqnarray}\label{eq.dyadicbis}
\mG(\vvx_r, \vvy;k)&=&  \widetilde{G}(\vvx_r, \vvy;k)\left( 1+\cO\left(\frac{a^2\Theta_a}{L^2}\right)+\cO\left(\Theta_b\right)\right)  \left(\mP(\vvx_r,\vvy)+ \cO\left(\frac{1}{kL}\right) \right) \nonumber\\
&=& \widetilde{G}(\vvx_r, \vvy;k) \left(\mP(\vvx_r,\vvy)+\cO\left(\frac{a^2\Theta_a}{L^2}\right)+ \cO\left(\Theta_b\right)\right) \nonumber\\
&=& \widetilde{G}(\vvx_r, \vvy;k) \left(\mP(\vvx_r,\vvy)+o(1)\right).
\end{eqnarray}

\subsection{The Kirchhoff imaging function in electromagnetics}
\label{sec:kirch}
In acoustics, the Kirchhoff imaging function is, up to conjugation,
time reversing the data recorded at the array and propagating it to the
medium \cite{Blei:2013:MSI,Borcea:2007:OWD,Pra:94:ETR}. For the Maxwell
equations we consider the analogous imaging function
\begin{equation}
  \cI(\vvy;k) = (\mu \omega^2)^{-1} \int_{\cA} \md\vx_r\,
  \overline{\mG((\vx_r,0),\vvy;k)}~\vvPi(\vx_r;k).
\end{equation}
In contrast with the acoustic case, the image we obtain is a vector
field. Also the factor $(\mu \omega^2)^{-1}$ is there to offset a
similar factor in the data \eqref{eq.data} and simplifies our asymptotic
analysis. The imaging function with data \eqref{eq.data} is then
\begin{equation}\label{eq.image}
 \cI(\vvy;k)=   \sum_{j=1}^N \mH(\vvy,\vvy_j;k) \vvp_{j},
\end{equation}
where  the $3\times 3$ matrix $\mH(\vvy',\vvy;k)$ is 
\begin{eqnarray}\label{eq.defmatA}
\mH(\vvy,\vvy';k)=  \int_{\cA} \md\vx_r \,
 \overline{\mG(\vvx_r,\vvy;k)}  ~ \mG(\vvx_r,\vvy';k).
\end{eqnarray}
The image at $\vvy$ of a dipole located at $\vvy_*$  with polarization
$\vp_*$ is thus $\mH(\vvy,\vvy_*; k) \vp_*$. The matrix
$\mH(\vvy,\vvy_*; k)$ is thus, in some sense, a matrix valued point
spread function for the Kirchhoff imaging function.

To analyze the Fraunhofer asymptotics of 
\eqref{eq.image}, we introduce for
$\vvy=(\vy,L+\eta)$ and $\vvy'= (\vy',L+\eta') \in
\real^3$ the $3\times 3$ matrix $\widetilde{\mH}(\vvy,\vvy';k)$ defined by
\begin{eqnarray}\label{eq.matIcross}
\widetilde{\mH}(\vvy,\vvy';k)&=&  \int_{\cA}\md\vx_r\, \overline{\widetilde{G}(\vvx_r, \vvy;k)}
\widetilde{G}(\vvx_r, \vvy';k)\mP(\vvx_r,\vvy)  \mP(\vvx_r,\vvy')  \nonumber  \\
&=&\frac{\exp[\mi k(\eta'-\eta)]}{(4\pi L)^2}\int_{\cA} \md\vx_r\,
\exp\Mb{\mi k \left(\frac{\vx_r\cdot(\vy'-\vy)}{L}\right)}
\mP(\vvx_r,\vvy)  \mP(\vvx_r,\vvy'). \nonumber\\
\end{eqnarray}

\begin{proposition}\label{prop.kirchfraun}
The Kirchhoff imaging function (\ref{eq.image}) associated with data
\eqref{eq.data} is, in the Fraunhofer regime,
\begin{equation}\label{eq.decompimage}
\cI(\vvy;k)=   \sum_{j=1}^N\left[
\widetilde{\mH}(\vvy,\vvy_j;k)  + \cO\left(\frac{a^4\Theta_a}{L^4}\right)+ \cO\left(\frac{a^2\Theta_b}{L^2} \right) \right] \vvp_{j}.
\end{equation}
\end{proposition}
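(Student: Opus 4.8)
The plan is to substitute the Fraunhofer asymptotics of the dyadic Green function, equation \eqref{eq.dyadicbis}, into the definition \eqref{eq.defmatA} of the matrix valued point spread function $\mH$, and then propagate the error terms through the array integral and the (finite) sum over sources. Writing $E(\vvx_r,\vvy) = \cO(a^2\Theta_a/L^2) + \cO(\Theta_b)$ for the matrix valued, entrywise remainder appearing in \eqref{eq.dyadicbis} — which is uniform in $\vvx_r \in \cA$ and in the imaging window because each of the estimates \eqref{eq.ampl}--\eqref{eq.dyadic} is — one has
\[
\overline{\mG(\vvx_r,\vvy;k)}\,\mG(\vvx_r,\vvy';k)
= \overline{\widetilde{G}(\vvx_r,\vvy;k)}\,\widetilde{G}(\vvx_r,\vvy';k)\,\bigl(\mP(\vvx_r,\vvy)+E(\vvx_r,\vvy)\bigr)\bigl(\mP(\vvx_r,\vvy')+E(\vvx_r,\vvy')\bigr).
\]
Expanding the product of the two bracketed factors and using that the orthogonal projectors $\mP(\vvx_r,\vvy)$, $\mP(\vvx_r,\vvy')$ have operator norm $1$, the cross terms $\mP E'$ and $E\mP'$ are again of size $\cO(a^2\Theta_a/L^2)+\cO(\Theta_b)$, while $EE'$ is of strictly higher order since both $a^2\Theta_a/L^2\ll1$ and $\Theta_b\ll1$; hence
\[
\overline{\mG}\,\mG' = \overline{\widetilde{G}}\,\widetilde{G}'\Bigl(\mP(\vvx_r,\vvy)\mP(\vvx_r,\vvy') + \cO\!\Bigl(\frac{a^2\Theta_a}{L^2}\Bigr) + \cO(\Theta_b)\Bigr).
\]

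Next I would integrate over the array $\cA$. Since $|\widetilde{G}(\vvx_r,\vvy;k)| = 1/(4\pi L)$, the prefactor $\overline{\widetilde{G}}\,\widetilde{G}'$ has modulus $(4\pi L)^{-2}$, and $\mes(\cA) = \cO(a^2)$. Thus the integral of the leading term is exactly $\widetilde{\mH}(\vvy,\vvy';k)$ as defined in \eqref{eq.matIcross}, while the integral of the remainder is controlled by
\[
\frac{\cO(a^2)}{(4\pi L)^2}\Bigl(\cO\!\Bigl(\frac{a^2\Theta_a}{L^2}\Bigr) + \cO(\Theta_b)\Bigr) = \cO\!\Bigl(\frac{a^4\Theta_a}{L^4}\Bigr) + \cO\!\Bigl(\frac{a^2\Theta_b}{L^2}\Bigr).
\]
This yields $\mH(\vvy,\vvy';k) = \widetilde{\mH}(\vvy,\vvy';k) + \cO(a^4\Theta_a/L^4) + \cO(a^2\Theta_b/L^2)$. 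Inserting this into $\cI(\vvy;k) = \sum_{j=1}^N \mH(\vvy,\vvy_j;k)\vvp_j$ from \eqref{eq.image}, and noting that the sum has a fixed number $N$ of terms so the errors simply add, gives the claimed expansion \eqref{eq.decompimage}.

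The only delicate point — and the one I would be most careful about — is ensuring that the remainder in \eqref{eq.dyadicbis} is genuinely \emph{uniform} over $\vvx_r\in\cA$ and over the imaging window, and that multiplying such a matrix valued $\cO(\cdot)$ error on either side by the orthogonal projectors does not inflate its order; both facts rest on the boundedness of $\mP$. Everything else is the routine bookkeeping of $\cO$ terms under a bounded-domain integral and a finite sum, and the identification of the leading term with $\widetilde{\mH}$ directly from its definition.
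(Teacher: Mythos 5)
Your proposal is correct and follows essentially the same route as the paper: substitute the Fraunhofer asymptotics \eqref{eq.dyadicbis} into the definition \eqref{eq.defmatA} of $\mH$, use the uniform boundedness of the orthogonal projectors and the fact that $\mes(\cA)=\cO(a^2)$ together with the prefactor $(4\pi L)^{-2}$ to bound the error matrix by $\cO(a^4\Theta_a/L^4)+\cO(a^2\Theta_b/L^2)$, and conclude by linearity of the finite sum in \eqref{eq.image}. Your explicit expansion of the product $(\mP+E)(\mP'+E')$ is just a slightly more detailed bookkeeping of the same error estimate the paper performs.
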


\begin{proof}
Using the asymptotic of the dyadic Green function \eqref{eq.dyadicbis}, one can rewrite the matrices $\mH(\vvy,\vvy_j;k)$ in the  imaging function \eqref{eq.image} as
\begin{equation}\label{eq.asymtAg}
\mH(\vvy,\vvy_j;k)=\widetilde{\mH}(\vvy,\vvy_j;k)  + \mE(\vvy,\vvy_j;k) 
\end{equation}
with a matrix $\mE(\vvy,\vvy_j;k)$ whose asymptotic is given by:
\begin{eqnarray*}
\mE(\vvy,\vvy_j;k)&=&\frac{1}{(4\pi L)^2}\int_{\cA} \md\vx_r
\left(\cO\left(\frac{a^2\Theta_a}{L^2}\right)+
\cO\left(\Theta_b\right)\right) \\
&=&  \cO\left(\frac{a^4\Theta_a}{L^4}\right)+ \cO\left(\frac{a^2\Theta_b}{L^2}\right).
\end{eqnarray*}
This last asymptotic follows from two facts. First the matrix norm
of the orthogonal projectors
$\mP(\vvx_r,\vvy)$ and  $\mP(\vvx_r,\vvy')$ is uniformly bounded in
$\cA$ (for any matrix norm choice). Second, the area of the array
$\cA$ is $\cO(a^2)$.
\end{proof}

\subsection{Cross-range estimation of positions}
\label{sec:crossrange:passive}

We start by analyzing the spatial resolution of the Kirchhoff imaging
function \eqref{eq.image}. This is done by  looking at the decay properties of the point
spread function $\mH(\vvy,\vvy_*;k)$ for $\vvy$ away from a fixed dipole location
$\vvy_*$. This decay comes from the (approximate) orthogonality of the
acoustic Green functions $G(\vvx_r,\vvy_*;k)$ and $G(\vvx_r,\vvy;k)$ as
functions of $\vx_r$ in $L^2(\cA)$ and when $\vvy$ and $\vvy_*$ are
``well separated'', see \cite{Borcea:2007:OWD,Cass:2014:SFU,Pra:94:ETR}. 

Since the imaging function is linear in the data we consider (without
loss of generality) the case of a single obstacle located at
$\vvy_* = (\vy_*,L + \eta_*)$. This is valid if we assume that the
dipole polarizations have the same order of magnitude (i.e.
$\|\vvp_i\|=\cO(1), \, i=1,\ldots, N$) and this implicitly assumed in the
following. The resolution analysis in this section is done in the
cross-range plane $z=L+\eta_*$ passing through the single obstacle.
The range analysis is done later in section~\ref{sec:range:passive}. The
main result of this section is summarized by the following proposition.

\begin{proposition}[Imaging
function decrease in cross-range] \label{eq.propcrossrang}
The Kirchhoff imaging function \eqref{eq.image} of a dipole located at
$\vvy_*=(\vy_*, L+\eta_*)$ and evaluated at  $\vvy=(\vy,
L+\eta_{*})$ satisfies for $\vvy=(\vy,L+\eta_*)$ with $\vy\neq \vy_*$
\begin{equation}\label{eq.asymp1}
 \|\cI(\vvy;k)\| =   \frac{ a^2}{L^2}  \, \left( \cO\Big(  \frac{ L}{
 a \, k   \left\|\vy-\vy_* \right\|}\Big) +o(1) \right),
\end{equation}
where the $o(1)$ is explicitly given by $\cO(a^2\Theta_a/L^2)+\cO(\Theta_b)$.
When the shape of the array $\cA$ is a disk of radius $a$, one has 
\begin{eqnarray}\label{eq.asymp2}
\cI(\vvy_*;k)=\frac{  a^2 }{16 \pi L^2} \begin{pmatrix} 1 & 0 & 0\\ 0 & 1 & 0\\  0 &0 &0  \end{pmatrix}  \vvp_*+o\left(\frac{a^2}{L^2}\right).
\end{eqnarray}
\end{proposition}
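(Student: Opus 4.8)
The plan is to reduce everything to the asymptotic representation of Proposition~\ref{prop.kirchfraun}, so that it suffices to estimate the model matrix $\widetilde{\mH}(\vvy,\vvy_*;k)$ when $\vvy = (\vy, L+\eta_*)$ lies in the same cross-range plane as $\vvy_* = (\vy_*, L+\eta_*)$. In that plane $\eta = \eta_*$, so the scalar prefactor $\exp[\mi k(\eta_*-\eta)]/(4\pi L)^2$ in \eqref{eq.matIcross} is just $1/(4\pi L)^2$, and $\widetilde{\mH}$ becomes $(4\pi L)^{-2}\int_{\cA}\md\vx_r\, \exp[\mi k \vx_r\cdot(\vy_*-\vy)/L]\, \mP(\vvx_r,\vvy)\mP(\vvx_r,\vvy')$. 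First I would observe that in the Fraunhofer regime the projectors $\mP(\vvx_r,\vvy)$ and $\mP(\vvx_r,\vvy_*)$ are both within $\cO(b/L)$ (equivalently $\cO(\sqrt{\Theta_b/(kL)})$, hence $o(1)$) of the fixed projector $\mP_0 = \mI - \vb\vb^\tr$ onto the plane $z=0$, where $\vb=(0,0,1)^\tr$; this is because $(\vvx_r-\vvy)/\|\vvx_r-\vvy\| = (\vx_r-\vy, -(L+\eta_*))/L\cdot(1+o(1))$ and the cross-range part is $\cO((a+b)/L)=\cO(a/L)$. So $\mP(\vvx_r,\vvy)\mP(\vvx_r,\vvy') = \mP_0 + \cO(a/L)$ uniformly on $\cA$, and the matrix norm of the product is uniformly bounded by a constant.

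Next I would factor $\widetilde{\mH}(\vvy,\vvy_*;k) = (4\pi L)^{-2}\big[\mP_0\,\widehat{\chi_\cA}(k(\vy-\vy_*)/L) + \cO(a/L)\cdot\cO(a^2)\big]$, where $\widehat{\chi_\cA}(\vxi) = \int_\cA \md\vx_r\, \exp[-\mi\vxi\cdot\vx_r]$ is the Fourier transform of the indicator of the array. For the decay estimate \eqref{eq.asymp1}, the point is that $|\widehat{\chi_\cA}(\vxi)| = \cO(a/\|\vxi\|)$ for $\|\vxi\|$ large — this is the standard oscillatory-integral bound for the Fourier transform of a bounded region with piecewise smooth boundary (integrate by parts once in a direction along which $\vxi$ has a nonzero component, or invoke the known stationary-phase/van der Corput estimate), giving the factor $\cO\big(a/(k\|\vy-\vy_*\|/L)\big) = \cO\big(aL/(k\|\vy-\vy_*\|)\big)$. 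Combining with the $1/(4\pi L)^2$ prefactor, the $\mP_0$ term contributes $\cO\big(a/(L^2 k\|\vy-\vy_*\|)\big) = (a^2/L^2)\cO\big(L/(a k\|\vy-\vy_*\|)\big)$, while the correction term and the $\mE$ term from Proposition~\ref{prop.kirchfraun} are absorbed into $(a^2/L^2)\cdot o(1)$ with the $o(1)$ explicitly $\cO(a^2\Theta_a/L^2)+\cO(\Theta_b)$ (noting $\cO(a/L) = o(1)$ is subsumed, and one should double-check the precise bookkeeping to confirm these two terms dominate). Applying $\widetilde{\mH}(\vvy,\vvy_*;k)$ to $\vvp_*$ with $\|\vvp_*\|=\cO(1)$ and taking norms yields \eqref{eq.asymp1}.

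For the on-array value \eqref{eq.asymp2}, I would set $\vy = \vy_*$, so the exponential is $1$ and $\widetilde{\mH}(\vvy_*,\vvy_*;k) = (4\pi L)^{-2}\int_\cA \md\vx_r\, \mP(\vvx_r,\vvy_*)^2 = (4\pi L)^{-2}\int_\cA \md\vx_r\, \mP(\vvx_r,\vvy_*)$ since projectors are idempotent. Replacing $\mP(\vvx_r,\vvy_*)$ by $\mP_0 = \diag(1,1,0)$ up to $\cO(a/L)$ and using that the area of a disk of radius $a$ is $\pi a^2$ gives $\widetilde{\mH}(\vvy_*,\vvy_*;k) = (\pi a^2)/(4\pi L)^2\,\mP_0 + \cO(a^3/L^3) = (a^2/(16\pi L^2))\diag(1,1,0) + o(a^2/L^2)$; combining with Proposition~\ref{prop.kirchfraun} (whose error is also $o(a^2/L^2)$) and applying to $\vvp_*$ gives \eqref{eq.asymp2}. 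The main obstacle is the oscillatory-integral decay bound $|\widehat{\chi_\cA}(\vxi)| = \cO(a/\|\vxi\|)$: for a square array a one-dimensional integration by parts in each variable makes this elementary, but for a general array shape (or the disk) one needs the classical estimate on the Fourier transform of the indicator of a convex body, and care is needed because the bound degrades near directions normal to flat pieces of $\partial\cA$ — for the square this means one must argue componentwise and handle the product structure, which is where the constant in the $\cO$ hides.
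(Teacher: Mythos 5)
Your argument is essentially sound and reaches both \eqref{eq.asymp1} and \eqref{eq.asymp2}, but by a genuinely different route from the paper's. For the decay you freeze the matrix amplitude at the constant projector $\mP_0=\diag(1,1,0)$ and reduce the oscillatory integral to the Fourier transform of the array indicator, invoking $|\hat{\chi}_{\cA}(\vxi)|=\cO(a/\|\vxi\|)$; this bound does hold uniformly in the direction of $\vxi$ (one divergence-theorem integration by parts, with constant proportional to the perimeter $\cO(a)$), so your worry about directions normal to flat boundary pieces only concerns sharper $\|\vxi\|^{-3/2}$-type decay and is not an obstacle. The paper instead keeps the varying amplitude $\mP(\vvx_r,\vvy)\mP(\vvx_r,\vvy_*)$ inside the integration by parts (lemma~\ref{lem.crossrange2}), bounding the boundary term by the perimeter and the interior term through the gradient of the projector product, and it evaluates the diagonal value for the disk exactly in cylindrical coordinates (lemma~\ref{lem.crossrange1}), getting the error $\cO(a^4/L^4)+\cO(a^2b/L^3)$ where your cruder $\cO(a^3/L^3)$ is still sufficient for \eqref{eq.asymp2}, since only $o(a^2/L^2)$ is claimed there.

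The one place your bookkeeping falls short of the full statement is the sentence identifying the $o(1)$ in \eqref{eq.asymp1} as $\cO(a^2\Theta_a/L^2)+\cO(\Theta_b)$, and you correctly flagged it. Freezing the amplitude costs an additive $(a^2/L^2)\,\cO(a/L)$ which you estimate in absolute value, so it receives no oscillatory gain. Under the stated scalings this term need not be dominated by the claimed remainders: $a/L\leq a^2\Theta_a/L^2$ holds iff $\Theta_a\geq L/a$, which is not implied by $1\ll\Theta_a\ll L^2/a^2$, and $a/L\leq\Theta_b$ holds iff $a\leq kb^2$, which is also not guaranteed; nor is it absorbed by the leading $\cO\big(L/(ak\|\vy-\vy_*\|)\big)$ term unless $\Theta_a\|\vy-\vy_*\|/L\lesssim 1$. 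So your argument proves \eqref{eq.asymp1} with $o(1)=\cO(a/L)+\cO(a^2\Theta_a/L^2)+\cO(\Theta_b)$, i.e.\ a slightly weaker error characterization. The paper avoids this because the amplitude variation stays inside the integration by parts, so its contribution is multiplied by the same $L/(ak\|\vy-\vy_*\|)$ factor and is absorbed into the leading term. To repair your version, apply the same one-step integration by parts to the remainder $\mP(\vvx_r,\vvy)\mP(\vvx_r,\vvy_*)-\mP_0$, whose entries are $\cO(a/L)$ with $\vx_r$-gradient $\cO(1/L)$; this yields a contribution of size $(a^2/L^2)\,\cO\big(L/(ak\|\vy-\vy_*\|)\big)\cdot\cO(a/L)$, recovering the paper's explicit form (and at that point you have essentially reproduced lemma~\ref{lem.crossrange2}).
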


Concretely, the asymptotics \eqref{eq.asymp1} and \eqref{eq.asymp2} in
proposition \ref{eq.propcrossrang} means that the image decays as we
move away from the dipole location $\vvy_*$. The image is asymptotically
zero compared to its value at $\vvy_*$, when $\left\|\vy-\vy_* \right\|$
is large with respect to $L/ak$. Hence the size of the
focal spot  in the cross-range is
given by the Rayleigh resolution $L/(ak)$, as is the case in acoustics
(see e.g. \cite{Blei:2013:MSI,Borcea:2007:OWD}).

\begin{remark}
In the particular case where the dipole polarization is aligned with the
$z-$axis, the image at the dipole location is
$\cI(\vvy_*;k)=o(a^2/L^2)$ (see \eqref{eq.asymp2}). We estimated  in
\eqref{eq.asymp1} that the imaging function is small compared to
$a^2/L^2$ for imaging points far way from the dipole location. Therefore
it is not clear from this analysis if the image reveals the position
of such a dipole. This case is considered later in
section~\ref{sec:range:passive}, when we consider multi-frequency
images.
\end{remark}

The proof of proposition \ref{eq.propcrossrang} is a direct consequence
of two results. The first one shows the decay of property of the image
function using a stationary phase argument
(lemma~\ref{lem.crossrange2}). The second one, explicitly calculates
the value of the image at the dipole location using a circular array
(lemma \ref{lem.crossrange1}).

\begin{lemma}\label{lem.crossrange2}
For $\vvy=(\vy,L+\eta)$, $\vvy'=(\vy',L+\eta')$ with  $\vvy \neq \vvy'$, the matrix $\widetilde{\mH}(\vvy,\vvy';k)$ defined by
\eqref{eq.matIcross} satisfies 
\begin{equation}\label{eq.decreascrossrange1}
\left\|\widetilde{\mH}(\vvy,\vvy';k)\right\|
 =  \frac{a^2}{L^2}\, \cO\left(   \frac{ L}{  a \, k \left\|\vy-\vy'
 \right\| } \right),
\end{equation}
where the constant in the $\cO$ notation depends only on the shape of
the array $\cA$ and the depth of the imaging window.
\end{lemma}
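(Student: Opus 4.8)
The plan is to reduce the matrix estimate \eqref{eq.decreascrossrange1} to a scalar oscillatory integral estimate by controlling the projector factors, and then to apply a (non-stationary phase / integration by parts) argument on the scalar integral. Starting from the definition \eqref{eq.matIcross}, the constant factor $\exp[\mi k(\eta'-\eta)]/(4\pi L)^2$ has modulus $1/(4\pi L)^2$, so it suffices to bound
\[
\left\| \int_{\cA} \md\vx_r \, \exp\Mb{\mi k \frac{\vx_r\cdot(\vy'-\vy)}{L}} \mP(\vvx_r,\vvy)\mP(\vvx_r,\vvy') \right\|
\]
by $\cO\!\left( \frac{L}{k\|\vy-\vy'\|}\right)$, since multiplying by $1/(4\pi L)^2$ and rescaling by $a^2/a^2$ then produces the stated $\frac{a^2}{L^2}\cO\!\left(\frac{L}{ak\|\vy-\vy'\|}\right)$.

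First I would record that the matrix-valued amplitude $\vx_r \mapsto \mP(\vvx_r,\vvy)\mP(\vvx_r,\vvy')$ is smooth on $\cA$ (the denominators $\|\vvx_r-\vvy\|^2$ and $\|\vvx_r-\vvy'\|^2$ stay bounded below by roughly $L^2$ since the imaging window is at depth $\sim L$), that it is uniformly bounded in operator norm (each $\mP$ is an orthogonal projector, hence norm $\le 1$), and — crucially for the integration by parts — that its gradient in $\vx_r$ is $\cO(1/L)$ entrywise, uniformly on $\cA$; the integration is over a fixed-shape region of area $\cO(a^2)$. Then I would apply integration by parts in the direction $\hat{\vu} = (\vy'-\vy)/\|\vy'-\vy\|$: since $\hat{\vu}\cdot\nabla_{\vx_r} \exp[\mi k \vx_r\cdot(\vy'-\vy)/L] = \mi k \|\vy'-\vy\|/L \cdot \exp[\cdots]$, one integration by parts in $\vx_r$ along $\hat{\vu}$ gains the factor $L/(k\|\vy-\vy'\|)$ at the cost of differentiating the smooth amplitude (contributing an $\cO(1/L)\cdot\cO(a)$ term from the area, i.e. still $\cO(a^2\cdot\frac{1}{L})$ overall after the $L/(ak\|\vy-\vy'\|)$ gain is accounted against the $a^2$) and a boundary term on $\partial\cA$ (of length $\cO(a)$, amplitude $\cO(1)$, and with the same $L/(k\|\vy-\vy'\|)$ gain). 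Collecting the bulk and boundary contributions gives the bound $\cO(a^2)\cdot\frac{L}{k\|\vy-\vy'\|}$ for the integral, hence $\frac{1}{(4\pi L)^2}\cdot\cO(a^2)\cdot\frac{L}{k\|\vy-\vy'\|} = \frac{a^2}{L^2}\cO\!\left(\frac{L}{ak\|\vy-\vy'\|}\right)$, as claimed.

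The main obstacle — and the point requiring care — is justifying that all constants in the $\cO$ notation depend only on the array shape and the depth of the imaging window, and not on $\vy,\vy'$ themselves: this needs the uniform lower bound $\|\vvx_r - \vvy\| \gtrsim L$ for $\vx_r\in\cA$ and $\vvy$ in the imaging window (which follows from $h\ll L$ and $b\ll L$), together with uniform smoothness bounds on $\mP(\vvx_r,\cdot)$ over that window. A secondary subtlety is that when $\vy$ and $\vy'$ differ only in range ($\vy=\vy'$ but $\eta\neq\eta'$) the integral does not decay; but the hypothesis of the lemma is $\vvy\neq\vvy'$ with both at depths $L+\eta$ and $L+\eta'$, and the stated conclusion involves $\|\vy-\vy'\|$, so the estimate is only asserted to be useful when $\vy\neq\vy'$ — I would note that when $\vy=\vy'$ the bound is vacuous (the right side is $+\infty$) and nothing needs to be proved. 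One could alternatively phrase the argument via the classical non-stationary phase lemma (e.g. repeated integration by parts with no critical points), but a single integration by parts already yields the $1/(k\|\vy-\vy'\|)$ rate that matches the Rayleigh resolution, so I would not iterate.
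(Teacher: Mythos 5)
Your argument is essentially the paper's argument: the paper rescales to $\widetilde{\cA}=a^{-1}\cA$ and applies a divergence-theorem/integration-by-parts identity (Wong's method) to the linear phase, which for a constant phase gradient is exactly the single directional integration by parts you perform on the unscaled array; your amplitude bounds (projector entries bounded by $1$, gradient of $\mP(\vvx_r,\vvy)\mP(\vvx_r,\vvy')$ of size $\cO(1/L)$ uniformly over the imaging window, boundary term over $\partial\cA$ of length $\cO(a)$, bulk term of size $\cO(a^2/L)$) correspond one-to-one to the paper's bounds $|\mathbb{M}_{p,q}|\le 3$ and $\mathbb{K}_{p,q}\le 12\,a\max(|L+\eta|^{-1},|L+\eta'|^{-1})$ on the rescaled array. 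Your remark that the bound is vacuous when $\vy=\vy'$ is also consistent with how the lemma is used.

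The one flaw is the final bookkeeping, which as written does not give the stated estimate. Your own intermediate estimates give, for the integral over $\cA$,
\begin{equation*}
\Bigl\|\int_{\cA}\md\vx_r\,e^{\mi k\vx_r\cdot(\vy'-\vy)/L}\,\mP(\vvx_r,\vvy)\mP(\vvx_r,\vvy')\Bigr\|
\;\le\; \frac{L}{k\|\vy-\vy'\|}\bigl[\cO(a)+\cO(a^2/L)\bigr]
\;=\;\cO\Bigl(\frac{aL}{k\|\vy-\vy'\|}\Bigr),
\end{equation*}
i.e. the boundary contribution $\cO(a)$ dominates and the combined factor is $\cO(a)$, not the $\cO(a^2)$ you wrote; note also that the integral has the dimensions of an area, so your stated reduction target $\cO(L/(k\|\vy-\vy'\|))$ and the concluding identity $\frac{1}{(4\pi L)^2}\cO(a^2)\frac{L}{k\|\vy-\vy'\|}=\frac{a^2}{L^2}\cO\bigl(\frac{L}{ak\|\vy-\vy'\|}\bigr)$ are dimensionally inconsistent and false as written. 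With the corrected count, multiplying by the prefactor $1/(4\pi L)^2$ gives $\cO\bigl(\frac{a}{Lk\|\vy-\vy'\|}\bigr)=\frac{a^2}{L^2}\,\cO\bigl(\frac{L}{ak\|\vy-\vy'\|}\bigr)$, which is exactly \eqref{eq.decreascrossrange1}, with constants depending only on the shape of $\cA$ (through its perimeter and area) and on the depth of the imaging window (through the lower bound $\|\vvx_r-\vvy\|\gtrsim L$), as required.
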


\begin{proof}
To avoid writing a phase term many times, we introduce the matrix 
\begin{equation}\label{eq.tildeIj}
\widetilde{\mH}'(\vvy,\vvy';k)=\exp[-\mi
k(\eta-\eta')]\mH(\vvy,\vvy';k),
\end{equation}
which satisfies $\| \widetilde{\mH}'(\vvy,\vvy';k) \|=\|
\mH(\vvy,\vvy';k) \|$. 
To prove (\ref{eq.decreascrossrange1}), it is more convenient to use the
rescaled array $\widetilde{\cA} = a^{-1} \cA$. By the change of
variable $\widetilde{\vx}_r= \vx_r/a$, we can rewrite the matrix $
\widetilde{\mH}'(\vvy,\vvy';k) $ defined by (\ref{eq.matIcross}) and (\ref{eq.tildeIj}) as
\begin{equation}\label{eq.matIjcross2}
 \widetilde{\mH}'(\vvy,\vvy';k)=\frac{a^2}{(4\pi L)^2}
 \int_{\widetilde{\cA}} \md \widetilde{\vx}_r\, \exp\Mb{\frac{ \mi k a}{L}  \,
 \widetilde{\vx}_r\cdot(\vy'-\vy)} \mP(a \widetilde{\vvx}_r,\vvy)  \mP(a
 \widetilde{\vvx}_r,\vvy'),
\end{equation}
where $\widetilde{\vvx}_r=(\widetilde{\vx}_r,0)$.
Then, we follow the method  proposed in \cite{Wong:2014:AAI}, based on an integration by parts,
to study the asymptotic of multivariate oscillating integrals. We denote by $f$ the scalar function:  
 $$ f(\widetilde{\vx}_r)=\frac{k a}{L}\, [\widetilde{\vx}_r\cdot(\vy'-\vy)],$$
and by $ \mathbb{M}(\cdot,\vvy,\vvy')$ and $
\mathbb{K}(\cdot,\vvy,\vvy')$ the $3\times3$ matrix-valued functions
defined componentwise by
$$
\mathbb{M}_{p,q}(\widetilde{\vvx}_r,\vvy,\vvy')=\big(\mP(a
\widetilde{\vvx}_r,\vvy)  \mP(a \widetilde{\vvx}_r,\vvy' )\big)_{p,q} \
\mbox{ and } \
\mathbb{K}_{p,q}(\widetilde{\vvx}_r,\vvy,\vvy')=\|\nabla_{
\widetilde{x}_r} \mathbb{M}_{p,q}(\widetilde{\vvx}_r,\vvy,\vvy')\|,
$$
for $p,q\in\{1,2,3\}$ on the rescaled array $\widetilde{\cA}$. 
Using the identity:
$$
\diver_{\widetilde{\vx}_r}\left(
\frac{\nabla{f}_{\widetilde{\vx}_r}}{\|\nabla{f}_{\widetilde{\vx}_r}\|^2}
\mathbb{M}_{p,q} e^{\mi f}\right)
=
e^{\mi f}\diver_{\widetilde{\vx}_r}\left(
\frac{\nabla{f}_{\widetilde{\vx}_r}}{\|\nabla{f}_{\widetilde{\vx}_r}\|^2}
\mathbb{M}_{p,q}\right)
+
\mi \, e^{\mi f}  \mathbb{M}_{p,q},
$$
and the divergence theorem, we can rewrite the entries of $\widetilde{\mH}'(\vvy,\vvy';k)$ as
$$
\widetilde{\mH}'_{p,q}(\vvy,\vvy';k)
=
\frac{-\mi a^2}{(4\pi L)^2\|\nabla{f}_{\widetilde{\vx}_r}\|}
\left[ \int_{\partial \widetilde{\cA} } \md \gamma \,
\frac{\nabla{f}_{\widetilde{\vx}_r}}{\|\nabla{f}_{\widetilde{\vx}_r}\| }\cdot \vn \,\mathbb{M}_{p,q} e^{\mi f} 
- \int_{\widetilde{\cA} } \md \widetilde{\vx}_r \,e^{\mi f}
\diver_{\widetilde{\vx}_r}\left(
\frac{\nabla{f}_{\widetilde{\vx}_r}}{\|\nabla{f}_{\widetilde{\vx}_r}\|}\mathbb{M}_{p,q}\right)
\right].
$$
Replacing $\nabla{f}_{\widetilde{\vx}_r}$ by its constant value $(ka/L)
(\vy'- \vy)$, expressing the divergence of the second integral and  then
using the Cauchy-Schwarz inequality we obtain
\begin{equation}\label{eq.decreascrossrange2}
\| \widetilde{\mH}'(\vvy,\vvy';k)_{pq}\|\leq \frac{(4\pi)^{-2}  a }{ k
\,L \left\|\vy'-\vy \right\| }\left( \int_{\partial \widetilde{\cA} }
\md \gamma \,
|\mathbb{M}_{p,q}(\widetilde{\vvx}_r,\vvy,\vvy')|+
\int_{\widetilde{\cA}} \md \widetilde{\vx}_r \mathbb{K}_{pq}(\widetilde{\vvx}_r,\vvy,\vvy' )\right).
\end{equation}
As the entries of an orthogonal projection are dominated by $1$, we have
$$
 |\mathbb{M}_{p,q}(\widetilde{\vvx}_r,\vvy,\vvy')| \leq \sum_{l=1}^3  |\mathbb{P}_{p,l}(a\widetilde{\vvx}_r,\vvy)| \, |\mathbb{P}_{l,q}(a\widetilde{\vvx}_r,\vvy') | \leq 3
$$
which leads immediately to \begin{equation}\label{eq.domborderterm}
\int_{\partial \widetilde{\cA} } \md \gamma \,
|\mathbb{M}_{p,q}(\widetilde{\vx}_r,\vy,\vy')| \leq 3  \mes \partial
\widetilde{\cA},
\end{equation}
where $\mes \partial \widetilde{\cA}$ is the perimeter of
$\widetilde{\cA}$.  A direct calculation gives 
\begin{equation*}\label{eq.domgrad}
|\nabla \mathbb{P}_{p,q} ( \,a \widetilde{\vvx}_r,\vvy)|\leq 2 a
|L+\eta|^{-1},
\end{equation*}
for all points $\vvy=(\vy, L+\eta) \in \mathbb{R}^3$ with $L+\eta \neq 0$.
Applying this inequality and the fact that the entries of an orthogonal
projector are bounded by $1$ we get
\begin{eqnarray*}
 \mathbb{K}_{p,q}(\widetilde{\vvx}_r,\vvy,\vvy')& \leq& \sum_{k=1}^3  |\nabla \mathbb{P}_{p,k}(a \widetilde{\vvx}_r,\vvy)| \, |\mathbb{P}_{k,q}(\widetilde{\vvx}_r,\vvy') | + | \mathbb{P}_{p,k}(a \widetilde{\vvx}_r,\vvy)| \, |\nabla \mathbb{P}_{k,q}(\widetilde{\vvx}_r,\vvy') | \\
& \leq & 12\, a\, \max( |L+\eta|^{-1}, |L+\eta'|^{-1} ).
\end{eqnarray*}
This last inequality leads immediately to
 \begin{equation}\label{eq.domgradint}
\int_{\widetilde{\cA}} \md \widetilde{\vx}_r
\mathbb{K}_{p,q}(\widetilde{\vvx}_r,\vvy,\vvy' )\leq 12\, a\,
\mes(\widetilde{\cA}) \max( |L+\eta|^{-1}, |L+\eta'|^{-1} )=\cO\left( \frac{a}{L} \right)=o(1),
\end{equation}
where $\mes(\widetilde{\cA})$ is the area of $\widetilde{\cA}$.
Finally, the asymptotic \eqref{eq.decreascrossrange1} for the matrix
$\mH(\vvy,\vvy';k)$ follows from the relations
\eqref{eq.decreascrossrange2},  \eqref{eq.domborderterm},
\eqref{eq.domgradint} and using the $\sup$ norm for the entries of a
matrix. By equivalence of norms, it also holds for other matrix norms.
\end{proof}

We now continue with lemma~\ref{lem.crossrange1} and study the
asymptotics of $\widetilde{\mH}(\vvy_*,\vvy_*;k)$. This is a special
case because the integral \eqref{eq.matIcross} is not oscillatory, is
independent of $k$ and simplifies to
$$\widetilde{\mH}(\vvy_*,\vvy_*;k)=\frac{1}{(4\pi L)^2}  \int_{\cA}
\md\vx_r\, \mP(\vvx_r,\vvy_*).$$ We compute explicitly this integral when
the array is a disk, but this could be generalized to other simple
geometries.  
\begin{lemma} \label{lem.crossrange1} When the
array $\cA$ is a disk of radius $a$, we have the asymptotic
\begin{equation}\label{eq.asymp}
\widetilde{\mH}(\vvy_*,\vvy_*;k)= \frac{  a^2 }{16 \pi L^2} \begin{pmatrix} 1 & 0 & 0\\ 0 & 1 & 0\\  0 &0 &0  \end{pmatrix} + \cO\left(\frac{a^4}{L^4}\right)+ \cO\left(\frac{a^2 b}{L^3} \right).
\end{equation}
\end{lemma}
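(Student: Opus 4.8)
The plan is to compute the non-oscillatory integral
$\widetilde{\mH}(\vvy_*,\vvy_*;k)=(4\pi L)^{-2}\int_{\cA}\md\vx_r\,\mP(\vvx_r,\vvy_*)$
block by block, expanding the projector in the small parameter $a/L$. Writing $\bm{s}=\vx_r-\vy_*\in\real^2$ and $D=\|\bm{s}\|^2+(L+\eta_*)^2=\|\vvx_r-\vvy_*\|^2$, the splitting $\real^3=\real^2\times\real$ gives, since $\vvx_r-\vvy_*=(\bm{s},-(L+\eta_*))$,
\[
\mP(\vvx_r,\vvy_*)=
\begin{pmatrix}
\mathbb{I}_2-D^{-1}\,\bm{s}\,\bm{s}^{\tr} & D^{-1}(L+\eta_*)\,\bm{s}\\
D^{-1}(L+\eta_*)\,\bm{s}^{\tr} & D^{-1}\|\bm{s}\|^2
\end{pmatrix}.
\]
Because $\cA$ is the disk of radius $a$ and $\|\vy_*\|=\cO(b)$ with $b\ll a$, we have $\|\bm{s}\|=\cO(a)$ uniformly on $\cA$, and since $|\eta_*|=\cO(h)\ll L$ we may write $L+\eta_*=L(1+\cO(h/L))$ with $L+\eta_*$ bounded away from $0$. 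Hence $D^{-1}=(L+\eta_*)^{-2}(1+\cO(a^2/L^2))=L^{-2}(1+\cO(a^2/L^2)+\cO(h/L))$ uniformly in $\vx_r\in\cA$; moreover $kh=\cO(1)$ together with $\Theta_a=ka^2/L\gg1$ forces $h/L=\cO(1/(kL))=o(a^2/L^2)$, so all $\eta_*$-dependent corrections will be absorbed into the final $\cO(a^4/L^4)$ term.

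I would then estimate the three blocks. The $(3,3)$ entry $D^{-1}\|\bm{s}\|^2=\cO(a^2/L^2)$ pointwise, so after multiplication by $(4\pi L)^{-2}$ and integration over $\cA$ (whose area is $\pi a^2$) it contributes $\cO(a^4/L^4)$. For the off-diagonal blocks one writes $D^{-1}(L+\eta_*)\bm{s}=\bm{s}/(L+\eta_*)+\cO(a^3/L^3)$ pointwise; using that the disk is centered at the origin, $\int_{\cA}\bm{s}\,\md\vx_r=\int_{\cA}(\vx_r-\vy_*)\,\md\vx_r=-\pi a^2\vy_*$, so the leading part integrates to $-\pi a^2\vy_*/(L+\eta_*)=\cO(a^2b)$ and the pointwise remainder integrates to $\cO(a^5/L^3)$; dividing by $(4\pi L)^2$ gives $\cO(a^2b/L^3)$ plus a term $\cO(a^5/L^5)$ absorbed in $\cO(a^4/L^4)$. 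Finally the upper-left $2\times2$ block splits as $\mathbb{I}_2-D^{-1}\bm{s}\,\bm{s}^{\tr}$, the second piece being $\cO(a^2/L^2)$ pointwise and hence $\cO(a^4/L^4)$ after integration and normalization, while $(4\pi L)^{-2}\int_{\cA}\mathbb{I}_2\,\md\vx_r=(16\pi^2L^2)^{-1}\pi a^2\,\mathbb{I}_2=\dfrac{a^2}{16\pi L^2}\,\mathbb{I}_2$. Reassembling the three blocks yields the claimed asymptotic \eqref{eq.asymp}.

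The computation is essentially bookkeeping, so the main obstacle is not a single hard step but rather making sure every $\cO$ bound is genuinely uniform over $\cA$ before integrating — in particular the geometric expansion of $D^{-1}$ — and tracking the off-diagonal contribution, which is the only place the off-center position $\vy_*$ enters and which produces the genuinely new error term $\cO(a^2b/L^3)$ absent when the dipole lies on the array axis. One must also invoke the Fraunhofer scalings ($\Theta_a\gg1$, $kh=\cO(1)$) precisely to guarantee that no $h$-dependent term survives, which would otherwise have to appear alongside $\cO(a^4/L^4)$ and $\cO(a^2b/L^3)$ in the statement.
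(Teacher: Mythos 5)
Your proof is correct and follows essentially the same route as the paper's appendix: a direct evaluation of the non-oscillatory integral $\int_{\cA}\mP(\vvx_r,\vvy_*)\,\md\vx_r$, exploiting the disk's symmetry (your centroid identity $\int_{\cA}(\vx_r-\vy_*)\,\md\vx_r=-\pi a^2\vy_*$ plays the role of the paper's explicit $\theta$-integration in cylindrical coordinates) and the Fraunhofer scalings $kh=\cO(1)$, $\Theta_a\gg1$ to absorb the $\eta_*$-dependent terms. Your block-by-block bookkeeping reproduces the paper's entry-wise asymptotics, with the same dominant error terms $\cO\left(\frac{a^4}{L^4}\right)+\cO\left(\frac{a^2b}{L^3}\right)$.
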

For the proof of lemma~\ref{lem.crossrange1}, see appendix
\ref{app:cross:passive}.

\begin{proof}[Proof of proposition~\ref{eq.propcrossrang}]
The proof is a direct consequence of proposition \ref{prop.kirchfraun},
lemmas \ref{lem.crossrange1} and \ref{lem.crossrange2} and the fact that
the polarization vectors $\vvp_i$ are assumed to have order one length.
\end{proof}

\subsection{Cross-range estimation of the polarization vectors} 
\label{sec:crossrangep}
We now extract from the Kirchhoff imaging function for $N$ dipoles
\eqref{eq.image} information about the polarization vectors $\vvp_1,
\ldots, \vvp_N$. Assuming the dipole positions $\vvy_1,\ldots, \vvy_N$
are known, the polarization vectors $\vvp_i$ must satisfy
\begin{equation}\label{eq.identityvvpi}
\mH(\vvy_i,\vvy_i;k) \vvp_i=\cI(\vvy_i;k)-\sum_{j\neq
i}^{N}\mH(\vvy_i,\vvy_j;k)\vvp_j,~i=1,\ldots,N.
\end{equation}
If the dipoles are distant enough from each other, lemma
\ref{lem.crossrange2} guarantees that the coupling terms $\sum_{j\neq
i}^{N}\mH(\vvy_i,\vvy_j;k)\vvp_j$ remain small. Thus we estimate the
$\vvp_i$ by solving the linear system,
\begin{equation}\label{eq.systptot}
\mH(\vvy_i,\vvy_i;k) \vvp=\cI(\vvy_i;k),~i=1,\ldots,N.
\end{equation}
Unfortunately, the systems \eqref{eq.systptot} are ill-conditioned. Indeed, for
a circular array of radius $a$, the Fraunhofer asymptotic of the matrix
$\mH(\vvy_i,\vvy_i;k)$ (proposition \ref{prop.kirchfraun} and lemma
\ref{lem.crossrange1}) gives
\begin{eqnarray}\label{eq.asymptA}
\mH( \vvy_i,\vvy_i;k)&=& \widetilde{\mH}(\vvy_i,\vvy_i;k)+  \cO\left(\frac{a^4\Theta_a}{L^4}\right)+ \cO\left(\frac{a^2\Theta_b}{L^2}\right)\\
&=&  \frac{ a^2 }{16 \pi L^2} \begin{pmatrix} 1 & 0 & 0\\ 0 & 1 & 0\\  0 &0 &0  \end{pmatrix} + \cO\left(\frac{a^2 b}{L^3}\right)+ \cO\left(\frac{a^4\Theta_a}{L^4}\right)+ \cO\left(\frac{a^2\Theta_b}{L^2}\right). \nonumber\,\,
\end{eqnarray}
Thus the matrix $\mH(\vvy_i,\vvy_i;k)$ is asymptotically a singular
matrix and one cannot expect retrieving the $z-$component $p_{i,z}$ of
$\vvp_i=(\vp_i,p_{i,z})$. The same asymptotic \eqref{eq.asymptA} reveals that the
$2\times 2$ block of $\mH(\vvy_i,\vvy_i;k)$ (corresponding to the two
components $\vp_i$ of $\vvp_i$) is invertible, with condition number
close to $1$. 

We now derive an error estimate on $\vp_i$. To this end we denote by 
$\mathcal{I}_{\text{KM}}(\vvy;k)$ the first two components of the imaging function
$\cI(\vvy;k)$ and rewrite
\begin{equation}\label{eq.matrixbloc}
\mH(\vvy',\vvy;k)=\begin{pmatrix} \mH_{1:2,1:2}(\vvy',\vvy;k)&
\mH_{1:2,3}(\vvy',\vvy;k)\\ \mH_{3,1:2}(\vvy',\vvy;k) &
\mH_{3,3}(\vvy',\vvy;k)\end{pmatrix} \in \complex^{3\times 3},
\end{equation}
where $\mH_{1:2,1:2}(\vvy',\vvy;k)$, is the $2\times 2$ first principal
submatrix of $\mH(\vvy',\vvy;k)$, etc$\ldots$

\begin{proposition}\label{prop.polacrossactiv}
Let $\cA$ be a circular array $\cA$ of radius $a$. In the Fraunhofer asymptotic regime, the $2\times 2$ system
\begin{equation}\label{eq.systpbloc}
\mH_{1:2,1:2}(\vvy,\vvy;k) \,\vp=\mathcal{I}(\vvy;k)
\end{equation}
is invertible. The solution $\vp$ (which depends on the imaging point
$\vvy$) approximates the true cross-range polarization vector $\vp_i$
according to the following estimates
\begin{itemize}
\item If the imaging point coincides with a dipole, i.e. $\vvy=\vvy_i$,
\begin{equation}\label{eq.crossrangeplarecover}
\|\vp-\vp_i\| = \cO \left(  \frac{L} {a\, k \min \limits_{j\neq
i}\|\vy_i-\vy_j\|} \right)+  \cO\left(\frac{ b}{L}\right)+ \cO\left(\frac{a^2\Theta_a}{L^2}\right)+ \cO\left(\Theta_b \right).
\end{equation}
\item If the imaging point does not coincide with a dipole in range,
i.e. if $\vy\neq \vy_j$ for all $j=1,\ldots,N$,
\begin{equation}\label{eq.drecraspola}
\|\vp\| = \cO \left(  \frac{L} {a\, k \min
\limits_{j=1,\ldots,N}\|\vy-\vy_j\|} \right)+ \cO\left(\frac{a^2\Theta_a}{L^2}\right)+ \cO\left(\Theta_b \right).
\end{equation}
\end{itemize}
\end{proposition}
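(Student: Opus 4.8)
The plan is to dispatch the three claims in turn; each follows by combining Proposition~\ref{prop.kirchfraun}, Lemma~\ref{lem.crossrange2} and Lemma~\ref{lem.crossrange1} with an elementary perturbation (Neumann series) argument for the inverse of the leading $2\times2$ block. \textbf{Invertibility.} First I would note that Lemma~\ref{lem.crossrange1}, and hence the asymptotic \eqref{eq.asymptA}, holds not only at a dipole but at any imaging point $\vvy=(\vy,L+\eta)$ in the imaging window (there $\|\vy\|=\cO(b)$), so that
\[
\mH(\vvy,\vvy;k)= \frac{a^2}{16\pi L^2}\begin{pmatrix}1&0&0\\0&1&0\\0&0&0\end{pmatrix}+\cO\left(\frac{a^2 b}{L^3}\right)+\cO\left(\frac{a^4\Theta_a}{L^4}\right)+\cO\left(\frac{a^2\Theta_b}{L^2}\right).
\]
Taking the first principal $2\times2$ submatrix and factoring out $a^2/(16\pi L^2)$ gives $\mH_{1:2,1:2}(\vvy,\vvy;k)=\frac{a^2}{16\pi L^2}(I+R)$, where $I$ is the $2\times2$ identity and $\|R\|=\cO(b/L)+\cO(a^2\Theta_a/L^2)+\cO(\Theta_b)=o(1)$ in the Fraunhofer regime (using $\Theta_b\ll1$ and $1\ll\Theta_a\ll L^2/a^2$). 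Hence $I+R$ is invertible for $L$ large enough, so \eqref{eq.systpbloc} is invertible, and $\|\mH_{1:2,1:2}(\vvy,\vvy;k)^{-1}\|=\cO(L^2/a^2)$.

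\textbf{The case $\vvy=\vvy_i$.} Such a point lies in the imaging window, so the above applies. Taking the first two components of \eqref{eq.identityvvpi} and using the block form \eqref{eq.matrixbloc} with $\vvp_i=(\vp_i,p_{i,z})$ gives
\[
\mH_{1:2,1:2}(\vvy_i,\vvy_i;k)\,\vp_i=\mathcal{I}(\vvy_i;k)-\mH_{1:2,3}(\vvy_i,\vvy_i;k)\,p_{i,z}-\sum_{j\neq i}\big(\mH(\vvy_i,\vvy_j;k)\,\vvp_j\big)_{1:2}.
\]
Subtracting the defining equation \eqref{eq.systpbloc}, the difference $\vp-\vp_i$ satisfies that $\mH_{1:2,1:2}(\vvy_i,\vvy_i;k)(\vp-\vp_i)$ equals the last two terms on the right. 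Since the leading term $\frac{a^2}{16\pi L^2}\diag(1,1,0)$ of \eqref{eq.asymptA} has vanishing $(1,3)$ and $(2,3)$ entries, $\mH_{1:2,3}(\vvy_i,\vvy_i;k)=\cO(a^2 b/L^3)+\cO(a^4\Theta_a/L^4)+\cO(a^2\Theta_b/L^2)$; the coupling terms are controlled, via Proposition~\ref{prop.kirchfraun} and Lemma~\ref{lem.crossrange2}, using $\|\vvp_j\|=\cO(1)$ and that $N$ is fixed, by $\cO(a/(Lk\min_{j\neq i}\|\vy_i-\vy_j\|))+\cO(a^4\Theta_a/L^4)+\cO(a^2\Theta_b/L^2)$. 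Multiplying by $\mH_{1:2,1:2}(\vvy_i,\vvy_i;k)^{-1}$, whose norm is $\cO(L^2/a^2)$, and simplifying gives exactly \eqref{eq.crossrangeplarecover}.

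\textbf{The case $\vy\neq\vy_j$ for all $j$.} Then $\vvy\neq\vvy_j$ for every $j$, so Proposition~\ref{prop.kirchfraun} and Lemma~\ref{lem.crossrange2} give $\|\mathcal{I}(\vvy;k)\|\le\|\cI(\vvy;k)\|=\cO(a/(Lk\min_j\|\vy-\vy_j\|))+\cO(a^4\Theta_a/L^4)+\cO(a^2\Theta_b/L^2)$, and solving \eqref{eq.systpbloc} together with $\|\mH_{1:2,1:2}(\vvy,\vvy;k)^{-1}\|=\cO(L^2/a^2)$ yields \eqref{eq.drecraspola}.

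\textbf{Main obstacle.} There is essentially no deep obstacle once the earlier lemmas are available: the real content is the uniform Neumann-series control of $\mH_{1:2,1:2}^{-1}$ over the imaging window (this is where the Fraunhofer scalings $\Theta_b\ll1$ and $\Theta_a\ll L^2/a^2$ are indispensable), together with careful bookkeeping of the several $\cO(\cdot)$ terms so that the $1/a^2$ from the inverse is absorbed by the $a^2$, $a^4$ factors in the remainders; the only mildly delicate point is observing that Lemma~\ref{lem.crossrange1} is valid at a generic imaging point, not only at a dipole.
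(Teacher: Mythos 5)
Your proof is correct and follows essentially the same route as the paper's: the Neumann-series control of $\mH_{1:2,1:2}(\vvy,\vvy;k)^{-1}$ via the asymptotic \eqref{eq.asymptA}, subtraction of the exact identity \eqref{eq.idendityvpi} from \eqref{eq.systpbloc} with Lemma~\ref{lem.crossrange2} bounding the coupling and $\mH_{1:2,3}$ terms, and the decay of the image from Proposition~\ref{eq.propcrossrang} for the off-dipole case. Your explicit remark that Lemma~\ref{lem.crossrange1} is valid at any point of the imaging window (not only at a dipole) is a point the paper leaves implicit, and your bookkeeping of the $\cO$ terms reproduces the stated estimates exactly.
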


\begin{proof}
The asymptotic expansion \eqref{eq.asymptA} of $\mH(\vvy,\vvy;k)$ yields
immediately that for the induced matrix 2-norm, 
$$
\mH_{1:2,1:2}(\vvy,\vvy;k)= \frac{ a^2 }{16 \pi L^2} \mI+o\left(\frac{a^2}{L^2}\right),
$$
Hence, as soon as
\begin{equation}
16   \frac{ \pi^2 L^2 }{   a^2} \, \|  \mH_{1:2,1:2}(\vvy,\vvy;k)- \mI \|<1,
 \label{eq:neumann}
\end{equation}
we have that the matrix $\mH_{1:2,1:2}(\vvy,\vvy;k)$ is invertible.
This is the case in the Fraunhofer regime because the left hand side of
\eqref{eq:neumann} is $o(1)$ and one gets that $ \| \mH_{1:2,1:2}(\vvy,\vvy;k)^{-1} \|$ is of order $L^2/a^2$.
Hence, in this regime, the system \eqref{eq.systpbloc} admits a unique solution $\vp\in \complex^2$.
When $\vvy=\vvy_i$, 
we use the block decomposition \eqref{eq.matrixbloc} of
$\mH(\vvy_i,\vvy_i;k)$ and the identity \eqref{eq.identityvvpi} to get
\begin{equation}\label{eq.idendityvpi}
\mH_{1:2,1:2}(\vvy_i,\vvy_i;k) \vp_i=\mathcal{I}_{\text{KM}}(\vvy_i;k) -
\mH_{1:2,3}(\vvy_i,\vvy_i;k) \, p_{i,z} - \Big[ \sum_{j\neq
i}^{N}\mH(\vvy_i,\vvy_j;k)\vvp_j \Big]_{1:2}.
\end{equation}
Using relations \eqref{eq.idendityvpi},
\eqref{eq.matrixbloc} and  \eqref{eq.systpbloc} (which define the systems
satisfied by $\vp_i$ and $\vp$) we get that:
\[
\|\vp-\vp_i\| \leq  \| \mH_{1:2,1:2}(\vvy_i,\vvy_i;k)^{-1} \|  \, \left[       
\Big\| \big[
\sum_{j\neq i}^{N}\mH(\vvy_i,\vvy_j;k)\vvp_j \big]_{1:2} \Big\| + \|
\mH_{1:2,3}(\vvy_i,\vvy_i;k)\, p_{i,z}\|\right].
\]
Finally, we apply lemma \ref{lem.crossrange2} and the asymptotic
formulas \eqref{eq.asymtAg} and \eqref{eq.asymptA} to dominate the right
side of the last inequality:
\[
\begin{aligned}
\|\vp-\vp_i\|& = C\,  \frac{ L^2 }{a^2  } \left[ \frac{  a^2}{L^2}  \cO
\left(  \frac{L} {a\, k \min \limits_{i\neq j}\|\vy_i-\vy_j\|}\right)
+\cO\left(\frac{ a^2 b}{L^3}\right)+ \cO\left(\frac{a^4\Theta_a}{L^4}\right)+ \cO\left(\frac{a^2\Theta_b}{L^2} \right) \right]\\
& = \cO \left(  \frac{L} {a\, k \min \limits_{i\neq
j}\|\vy_i-\vy_j\|}\right) + \cO\left(\frac{ b}{L}\right)+
\cO\left(\frac{a^2\Theta_a}{L^2}\right)+ \cO\left(\Theta_b \right).
\end{aligned}
\]
We now move to the case $\vvy\neq \vvy_i$. From \eqref{eq.systpbloc}
we immediately get
$$
\|\vp\|\leq  \| \mH_{1:2,1:2}(\vvy,\vvy;k)^{-1} \| \|
\mathcal{I}_{\text{KM}}(\vvy;k) \|.
$$
Then, using proposition \ref{eq.propcrossrang} which expresses the
decay of the Fraunhofer image $\| \mathcal{I}_{\text{KM}}(\vvy;k) \|$ in the case of
one dipole and the linearity of $\mathcal{I}_{\text{KM}}(\vvy;k)$ with respect to
the number of dipoles,  we immediately get \eqref{eq.drecraspola}.
\end{proof}

The last proposition means that one obtains a good estimation of
the two components $\vp_i$ of $\vvp_i$ by solving the linear system
(\ref{eq.systpbloc}) at $\vvy=\vvy_i$. A part from terms that are small
in the Fraunhofer regime, the error is due to the presence
of the other dipoles  and involves the cross-range distances
$\|\vy_i-\vy_j\|$. Thus the error in  estimating the polarization vector
is small when the dipoles are well separated, i.e. with distances large
compared to the cross-range resolution $L/(ak)$. The
asymptotic \eqref{eq.drecraspola} tells us that $\|\vp\|$ is
also a good imaging function for the dipole's position since it decreases
as  reciprocal of the distance to the closest dipole.  We expect
$\|\vp\|$ to have the same resolution $L/(ak)$ as the Fraunhofer imaging
function $ \cI(\vvy;k)$ (see proposition \ref{eq.propcrossrang}).

\subsubsection{Numerical experiments}
\label{sec:numexp:passive1}
We illustrate our cross-range resolution analysis in figures
\ref{fig.illcond} and \ref{fig.goodcond}. Our goal is to contrast the
difference between solving for all 3 components of the polarization
vector for a single dipole and solving only for the cross-range
components. We consider a regime corresponding to microwaves in
vacuum, i.e. propagation velocity $c=3\times10^8$ m/s, frequency
$f_0=2.4$GHz and wavelength $\lambda_0=0.125$ m. The array $\cA$ is a square centered
at the origin and located within the plane $z=0$. Its side aperture is $a=20
\lambda_0$ and is composed of $40 \times 40$ equally spaced, point-like
receivers. The single dipole is located at
$\vvy_* = (0,0,L=100 \lambda_0)$  and has polarization vector $\vvp_*
=(1+2\mi,1-1\mi, 1+1\mi)$. 

In figure \ref{fig.illcond}, the reconstruction of both the position and
polarization vector is performed by solving the ill-conditioned linear
system \eqref{eq.systptot}. Here $\|\vvp\|$ is a poor approximation to
$\|\vvp_*\|=3$ near the dipole location, so much so that the image
appears as that of two nearby dipoles. Since the directions
$\operatorname{Re}(\vp)$ and $\operatorname{Im}(\vp)$ are still well
recovered, this means that $p_z$ is not well reconstructed. The focal
spot is broader compared to that in figure~\ref{fig.goodcond}, which
indicates that $\|\vvp\|$ does not give a precise reconstruction of the
position.  The recovery of the directions $\operatorname{Re}(\vp)$ and
$\operatorname{Im}(\vp)$ is also worse than in figure~\ref{fig.goodcond}
because they do not decrease as fast when one moves away from the
dipole.

In figure \ref{fig.goodcond}, the reconstruction of both the position
and cross-range polarization $\vp$ is performed by solving the
well-conditioned linear system \eqref{eq.systptot}. At the dipole
position we have $\|\vp\| = 2.5$, which is close to the true value
$\|\vp_*\|$ and the focal spot is nicely centered about the dipole. The
size of the focal spot is consistent with the Rayleigh criterion, i.e.
$\lambda_0 L/a=5 \lambda_0$. This confirms the asymptotic  expression
\eqref{eq.drecraspola} which tells us that in the case of a single
dipole, $\|\vp\|$ has a cross-range resolution $L/(ak)$ similar to the
Kirchhoff imaging function $\cI(\vvy;k)$. Moreover, one gets a stable
reconstruction of $\operatorname{Re}(\vp)$ and $\operatorname{Im}(\vp)$
near the dipole position.

\begin{figure}[htp]
\begin{center}
\includegraphics[width=6.05cm]{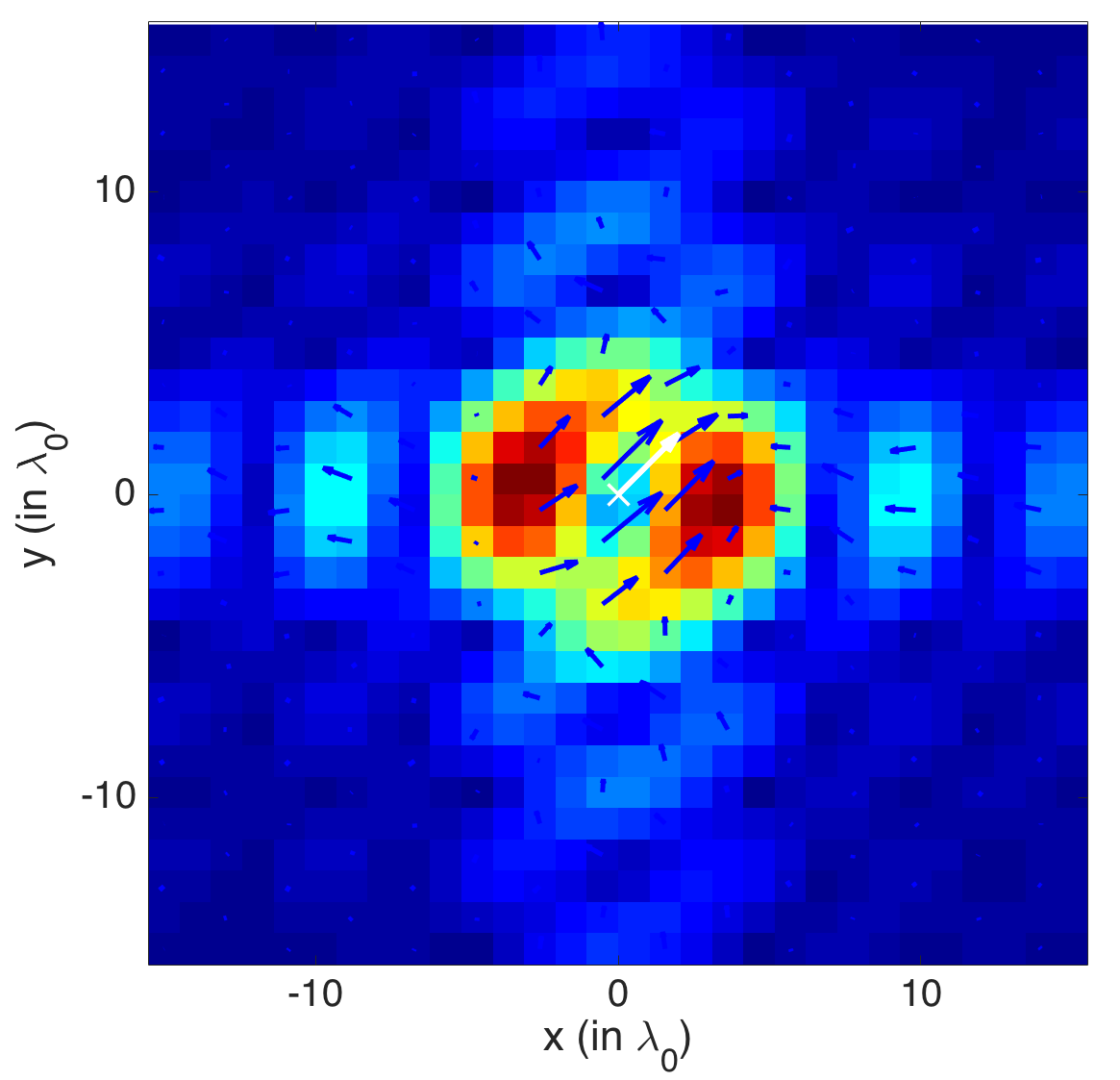}
\includegraphics[width=6.42cm]{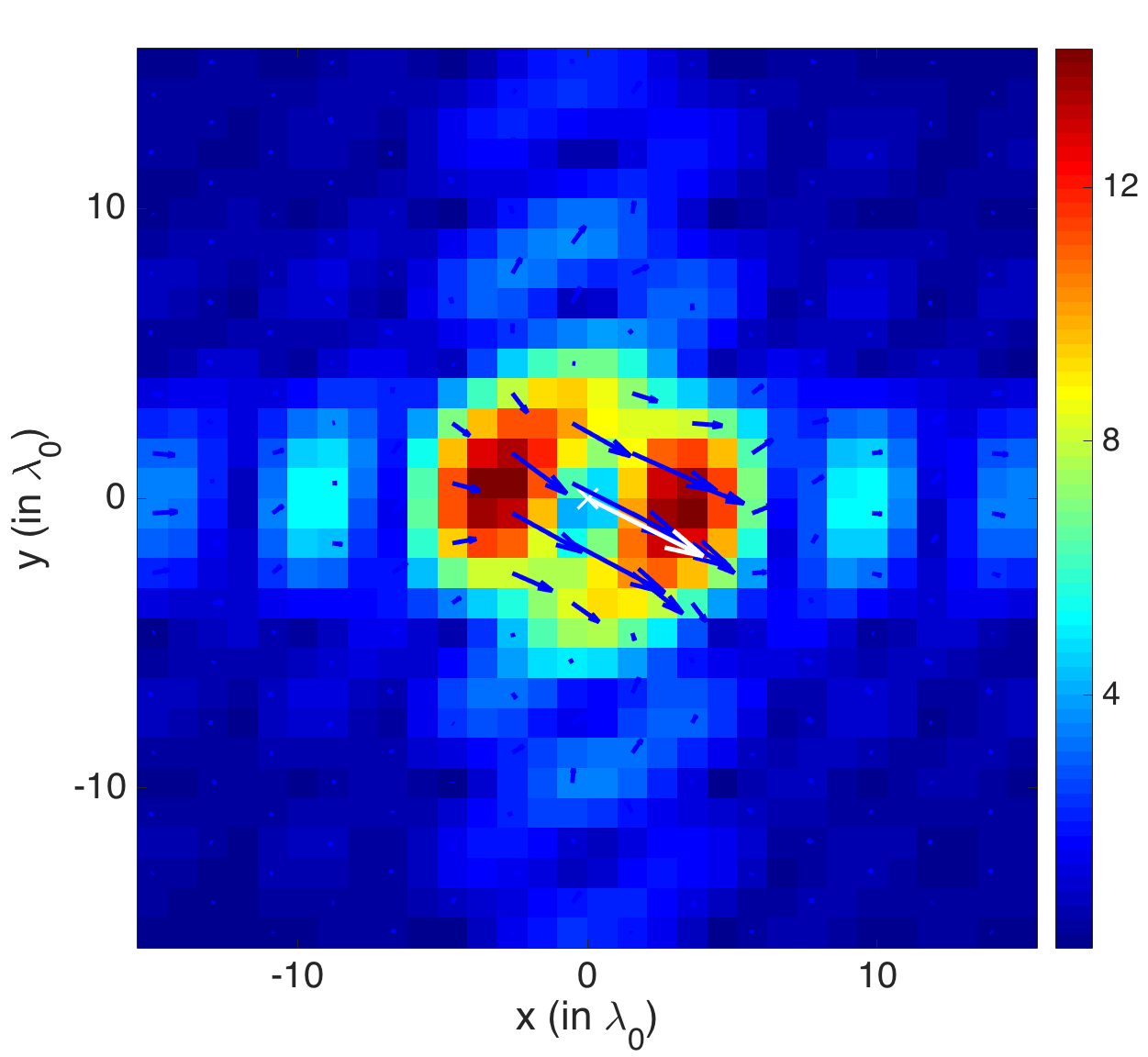}
\end{center}
\caption{Recovery of all three components of the polarization vector for
a single dipole (white cross) in the $z=L$ plane by solving the
(ill-conditioned) system \eqref{eq.systptot}. The color scale represents
$\|\vvp\|$ (i.e. the norm of all three polarization vector components). The left image contains the real part of the reconstructed
(blue) and true (white) cross-range polarizations. The right image is
similar for the imaginary part.}
\label{fig.illcond}
\end{figure}

\begin{figure}[htp]
\begin{center}
\includegraphics[width=6.05cm]{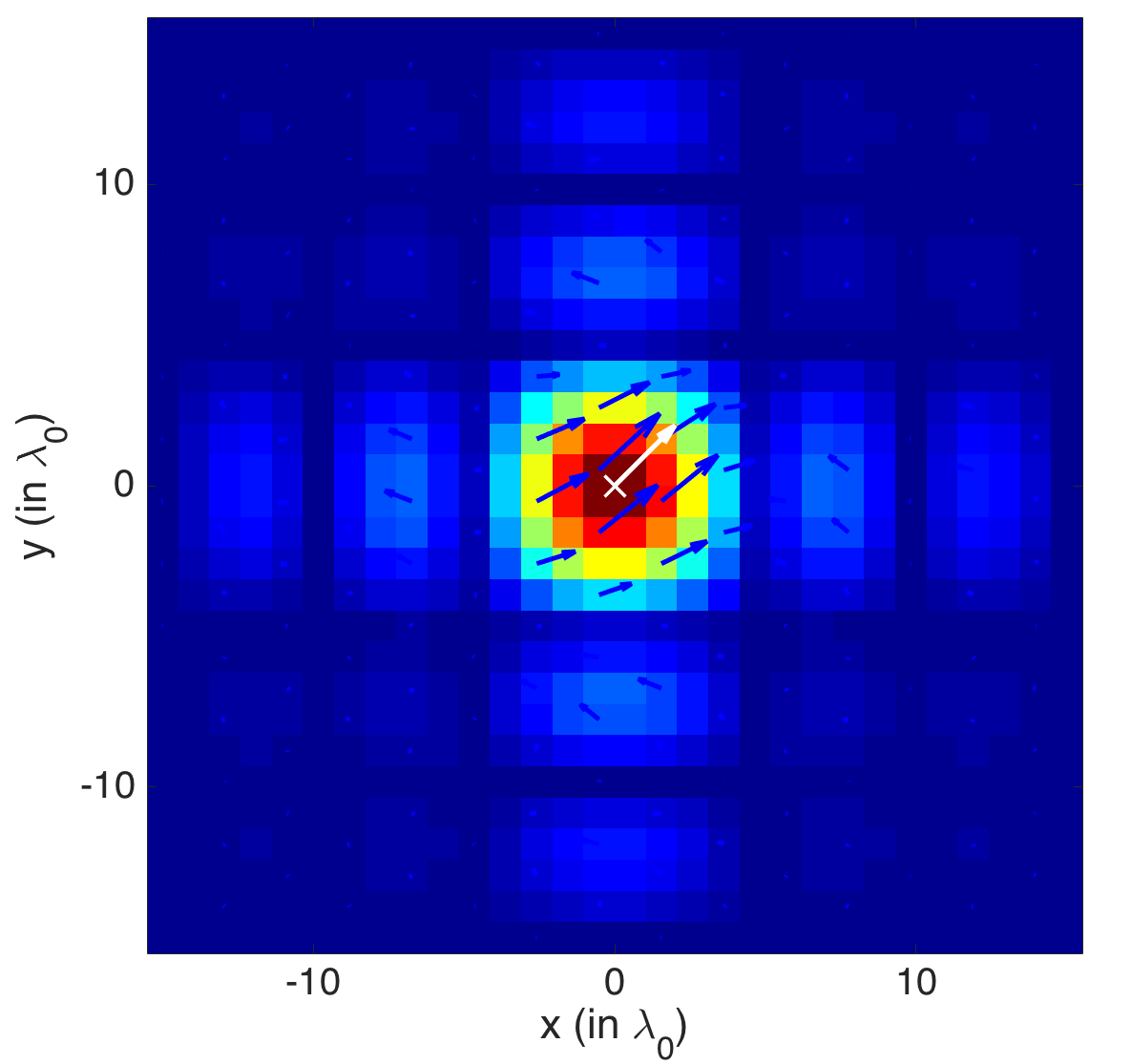}
\includegraphics[width=6.42cm]{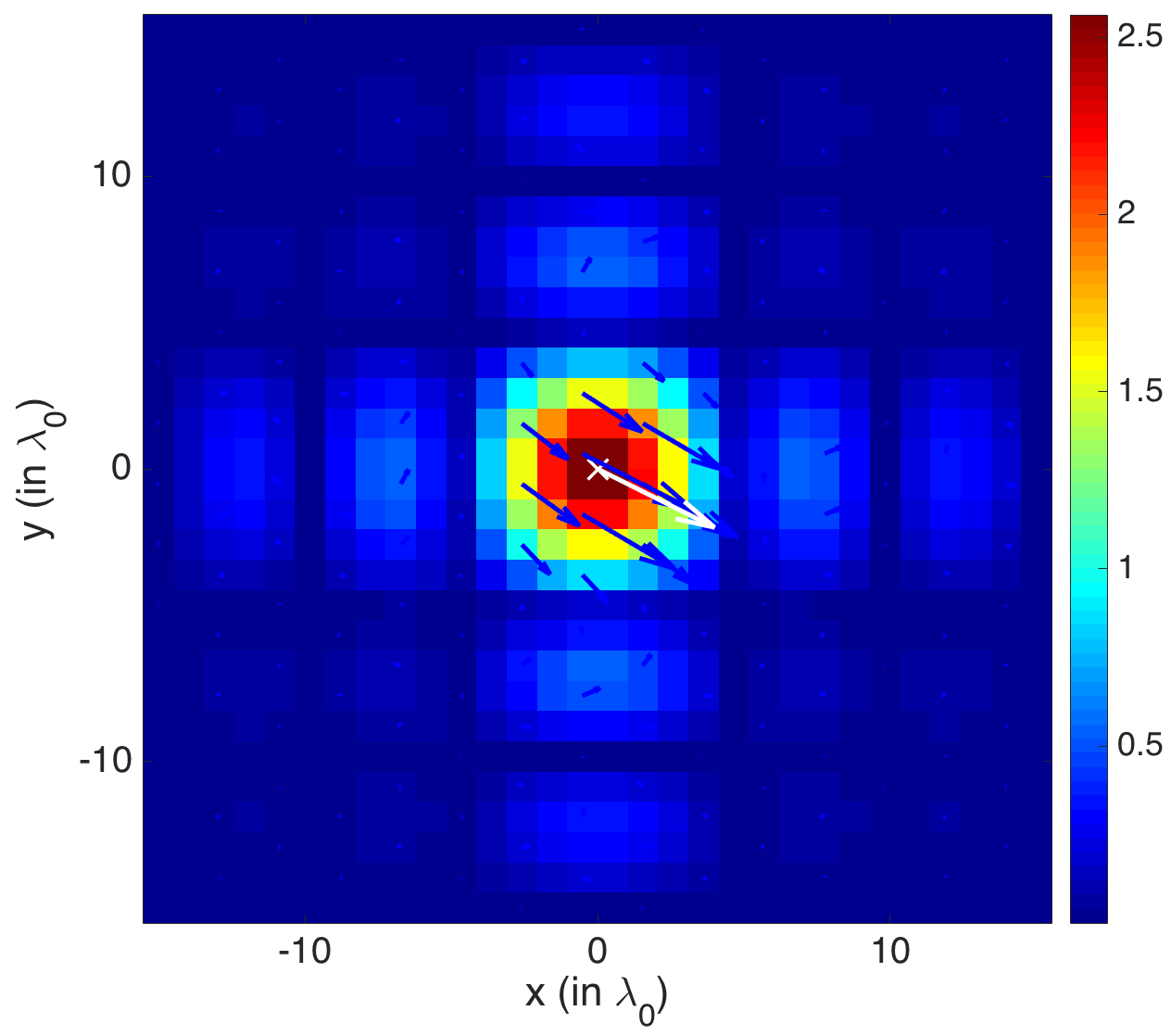}
\end{center}
\caption{Recovery of the cross-range components of the polarization
vector for a single dipole (white cross) in the $z=L$ plane by solving the
system \eqref{eq.systpbloc}. The color scale represents $\|\vp\|$ (i.e.
the norm of the cross-range component of the polarization vector). The
left image contains the real part of the reconstructed (blue) and true (white)
cross-range polarizations. The right image is similar for the imaginary
part.}
\label{fig.goodcond}
\end{figure}

\subsection{Range estimation of the position}
\label{sec:range:passive}
Similarly to the acoustic case, Kirchhoff migration can resolve the
location of a reflector in depth (range) by integrating $\cI(\vvy;k)$
over a frequency band \cite{Blei:2013:MSI}. Here we consider the band
$[\omega_0-B/2, \omega_0+ B/2]$ with central frequency $\omega_0$ and
bandwidth $B$. By linearity, we consider only the case of a single
dipole located at $\vvy_*=(\vy_*,L+\eta_*)$ and polarization vector
$\vvp_*$.  For the analysis, we assume that the Fraunhofer
asymptotic regime (see section~\ref{sub.asympfraun}) holds uniformly
with $k$ in the whole frequency band $[\omega_0-B/2, \omega_0+ B/2]$. We
study the following imaging function
\begin{equation}\label{eq.kirchhoffmultiplefrequency}
\cI(\vvy)=\int_{|\omega-\omega_0|<B/2} \md \omega   \,
\cI\left(\vvy;\frac{\omega}{c}\right).
\end{equation}
We suppose that the cross-range position $\vy_*$ of the obstacle
is known and we evaluate the imaging function
\eqref{eq.kirchhoffmultiplefrequency} at points of the form $\vvy=(\vy_*,L+\eta)$.
Hence, using the asymptotic \eqref{eq.decompimage}  of
$\cI(\vvy;\omega/c)$ yields (for $N=1$) 
\begin{eqnarray}\label{eq.Imagmf}
\cI(\vvy)&=&\int_{|\omega-\omega_0|<B/2}\md \omega \,
\left[\widetilde{\mH}\big(\vvy,\vvy_*; \frac{\omega}{c}\big)  + \cO\left(\frac{a^4\Theta_a}{L^4}\right)+ \cO\left(\frac{a^2\Theta_b}{L^2} \right) \right] \vvp_*, \\
&=& \int_{|\omega-\omega_0|<B/2}\md \omega    \frac{\exp[\mi \omega(\eta_*-\eta)/c]}{(4\pi L)^2}\int_{\cA} \md\vx_r\, \mP(\vvx_r,\vvy)  \mP(\vvx_r,\vvy_*)  \, \vvp_* + \, B \,  o\left(\frac{a^2}{L^2}\right). \nonumber
\end{eqnarray}

The following proposition shows that as the range distance $|\eta -
\eta_*|$  between the imaging point and the dipole is large compared to
$c/B$, the norm of the imaging function becomes small. This is similar
to the range resolution estimate in acoustic \cite{Blei:2013:MSI,Borcea:2007:OWD}.

\begin{proposition}[Imaging function decrease in the range direction]
\label{eq.proprang}
When the array $\cA$ is a disk of radius $a$,
the Kirchhoff imaging function \eqref{eq.image}  of the dipole $\vvy_{*}=(\vy_{*},
L+\eta_{*})$ satisfies for all $\vvy=(\vy_*,L+\eta)$  with $\eta \neq
\eta_*$,
\begin{equation}\label{eq.asymprange1}
 \|\cI(\vvy)\| =    B  \frac{ a^2}{L^2} \left(  \cO\Big(    \frac{ c}{
 B   \left|\eta-\eta_{*} \right|} \Big)+o(1)\right).
\end{equation}
At the dipole location we have
\begin{eqnarray}\label{eq.asymprange2}
\cI(\vvy_{*})=   B \left[ \frac{  a^2 }{16 \pi L^2} \begin{pmatrix} 1 & 0 & 0\\ 0 & 1 & 0\\  0 &0 &0  \end{pmatrix}  \vvp_*+o\left(\frac{a^2}{L^2}\right)\right].
\end{eqnarray}
\end{proposition}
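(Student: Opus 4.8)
The plan is to start from the single-dipole representation \eqref{eq.Imagmf} of $\cI(\vvy)$ at an imaging point $\vvy=(\vy_*,L+\eta)$ and to exploit the fact that the inner integral over the array $\cA$ appearing there does not depend on $\omega$. The two integrations therefore decouple, and I would rewrite
\[
\cI(\vvy)=\left[\frac{1}{(4\pi L)^2}\int_{\cA}\md\vx_r\,\mP(\vvx_r,\vvy)\,\mP(\vvx_r,\vvy_*)\right]\left[\int_{|\omega-\omega_0|<B/2}\md\omega\,\me^{\mi\omega(\eta_*-\eta)/c}\right]\vvp_*+B\,o\!\left(\frac{a^2}{L^2}\right),
\]
and then estimate the matrix factor and the scalar (frequency) factor in turn.

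For the matrix factor, the key point is that $\vvy$ and $\vvy_*$ share the \emph{same} cross-range position $\vy_*$ and differ in range only by $|\eta-\eta_*|=\cO(h)$. A direct computation, the analogue for the $\eta$-variable of the gradient bound in the proof of Lemma~\ref{lem.crossrange2}, gives $\|\partial_\eta\mP(\vvx_r,\vvy)\|=\cO(1/L)$ uniformly for $\vvx_r\in\cA$, hence $\|\mP(\vvx_r,\vvy)-\mP(\vvx_r,\vvy_*)\|=\cO(h/L)$. Since $\mP(\vvx_r,\vvy_*)$ is an orthogonal projector (so $\mP(\vvx_r,\vvy_*)^2=\mP(\vvx_r,\vvy_*)$ and $\|\mP(\vvx_r,\vvy_*)\|\le 1$), replacing $\mP(\vvx_r,\vvy)$ by $\mP(\vvx_r,\vvy_*)$ inside the array integral costs only $\cO(a^2h/L^3)$, so the matrix factor equals $\widetilde{\mH}(\vvy_*,\vvy_*;k)+\cO(a^2h/L^3)$. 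Lemma~\ref{lem.crossrange1} then identifies it with $\frac{a^2}{16\pi L^2}\diag(1,1,0)+o(a^2/L^2)$, because $a\ll L$, $b\ll L$ and $h\ll L$ make every remainder $o(a^2/L^2)$.

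For the scalar factor, set $\tau=\eta_*-\eta$; an elementary computation gives $\int_{|\omega-\omega_0|<B/2}\md\omega\,\me^{\mi\omega\tau/c}=B$ when $\tau=0$, and $\frac{2c}{\tau}\,\me^{\mi\omega_0\tau/c}\sin(B\tau/2c)$ otherwise, so its modulus is bounded by $\min(B,\,2c/|\eta-\eta_*|)$. Multiplying the two factors, using $\|\vvp_*\|=\cO(1)$, and bounding the frequency integral by $B$ in the remainder terms and by $2c/|\eta-\eta_*|$ in the leading term, one obtains $\|\cI(\vvy)\|=B\frac{a^2}{L^2}\bigl(\cO(c/(B|\eta-\eta_*|))+o(1)\bigr)$ for $\eta\neq\eta_*$, which is \eqref{eq.asymprange1}; at $\eta=\eta_*$ the frequency integral equals $B$ exactly, which gives \eqref{eq.asymprange2}.

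The only step that requires real care, as opposed to routine bookkeeping, is the projector replacement $\mP(\vvx_r,\vvy)\mP(\vvx_r,\vvy_*)=\mP(\vvx_r,\vvy_*)+\cO(h/L)$, together with checking that this error and the $B\,o(a^2/L^2)$ already present in \eqref{eq.Imagmf} are uniform in $\omega$ over the band so that the decoupling is legitimate; this is precisely where the hypotheses $\vy=\vy_*$ and $kh=\cO(1)$ (hence $h\ll L$) of the Fraunhofer regime enter. Identifying the array integral with $\widetilde{\mH}(\vvy_*,\vvy_*;k)$ and evaluating the sinc-type frequency integral are then routine.
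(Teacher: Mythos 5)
Your proof is correct and follows essentially the same route as the paper: starting from \eqref{eq.Imagmf}, factoring the frequency integral out of the (frequency-independent) array integral, evaluating it as a sinc to get the $c/(B|\eta-\eta_*|)$ decay, and invoking Lemma~\ref{lem.crossrange1} at the dipole location for \eqref{eq.asymprange2}. The only deviation is your projector-replacement step $\mP(\vvx_r,\vvy)\mP(\vvx_r,\vvy_*)=\mP(\vvx_r,\vvy_*)+\cO(h/L)$, which is valid but not needed for \eqref{eq.asymprange1}, where the paper simply bounds the projector product in norm by one.
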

\begin{proof}
When $\eta\neq \eta_*$,  we get
$$
\|\cI(\vvy) \| \leq  C\frac{ a^2 }{ L^2}  \left[\Big|
\int_{|\omega-\omega_0|<B/2}\md \omega    \exp[\mi\omega(\eta_*-\eta)/c]
\Big|+B o(1)\right]
$$
for some $C>0$, because orthogonal projectors have induced matrix 2-norm
equal to one and the polarization vector is assumed to be $\cO(1)$.  By
evaluating the integral in frequency of the latter expression we
obtain
\begin{equation}\label{eq.sinscardinal}
 \int_{|\omega-\omega_0|<B/2}\md \omega    \exp[\mi\omega(\eta_*-\eta)/c] = B \exp[\mi \omega_0 (\eta_*-\eta)/c ] \operatorname{sinc}\left(\frac{B (\eta_*-\eta)}{2 \, c}\right)
\end{equation}
where $\operatorname{sinc}(x) \equiv \sin(x) / x$. Hence we have that
$$
\|\cI(\vvy) \| = B  \frac{ a^2}{L^2}  \left(  \cO\Big(    \frac{c}{ B   \left|\eta-\eta_* \right|} \Big)+o(1)\right).
$$

The expression \eqref{eq.Imagmf} of the imaging function  $\cI(\cdot)$
evaluated at the dipole location $\vvy_*$ is
$$
\cI(\vvy_*)=\left[\int_{|\omega-\omega_0|<B/2}\md \omega \, \right]  \left[ \int_{\cA} \md\vx_r\,   \mP(\vvx_r,\vvy_*) \right]  \, \vvp_*+ B \,  o\left(\frac{a^2}{L^2}\right),
$$
and by lemma~\ref{lem.crossrange1} satisfies the following asymptotic
$$
\cI(\vvy_*)=\displaystyle  B \left[ \frac{  a^2 }{16 \pi L^2} \begin{pmatrix} 1 & 0 & 0\\ 0 & 1 & 0\\  0 &0 &0  \end{pmatrix}  \vvp_*+o\left(\frac{a^2}{L^2}\right)\right].
$$
\end{proof}

\subsection{Polarization vector recovery in the range direction}
\label{sec:rangepol:passive}

As we discussed in section~\ref{sec:crossrangep}, we cannot expect to
stably recover the range component $p_z$ of a dipole's polarization
vector $\vvp$ from the Kirchhoff image $\cI$. A straightforward
integration in frequency of the linear system approach of
section~\ref{sec:crossrangep} gives a good estimate of the cross-range
polarization vector $\vp$ if we knew the range position of the dipole.
Unfortunately, moving in depth gives oscillatory artifacts in $\vp$. We
characterize these artifacts and show how to eliminate them.

\subsubsection{Analysis of polarization vector image in range}
For the analysis, we consider once again a family of
dipoles with positions  $\vvy_1,\ldots, \vvy_N$ and polarizations
$\vvp_1, \ldots, \vvp_N$. To recover the cross-range components of the
polarization vectors, we integrate \eqref{eq.systpbloc} over the
frequency band and solve the following linear system
\begin{equation}\label{eq.systmultifreq}
\Big[\int_{|\omega_0 - \omega| < B/2} \md \omega\,\mH_{1:2,1:2}\Big(\vvy,\vvy;\frac{\omega}{c}\Big)  \Big]\, \vp= \mathcal{I}_{\text{KM}}(\vvy),
\end{equation}
where $\mathcal{I}_{\text{KM}}(\vvy)$ denotes the two first components of the
imaging function $\cI(\vvy)$.  Naturally, the solution of this system
leads to a stable reconstruction of both the position $\vvy_i$ and the
polarization $\vp_i$ of each dipole in the cross-range. Indeed, it is
straightforward to check that integrating the system
\eqref{eq.systpbloc} over the frequency band does not change: (a) its
invertibility in the Fraunhofer regime, (b) its condition number
being close to one and (c) the resolution estimates
\eqref{eq.crossrangeplarecover} and \eqref{eq.drecraspola}.

Now we study the behavior in range of this procedure to recover the
cross-range component of the polarization vector image $\vp$. Here we
isolate the effect of range by considering the case where all the
dipoles have same cross-range, i.e. $\vvy_i=(\vy_*,L+\eta_i)$ for $i=1,
\ldots, N$. The following proposition shows that the resolution of
$\|\vp\|$ in the range direction is $c/B$ (as in acoustics, see e.g.
\cite{Blei:2013:MSI,Borcea:2007:OWD}).  Furthermore at the dipole position
$\vvy_i$, one recovers the polarization vector $\vp_i$ provided that
the range distance between the different dipoles $|\eta-\eta_j|$ is
large with respect to the range resolution $c/B$.

\begin{proposition}\label{prop.polarange}
When the array $\cA$ is a disk of radius $a$ and the dipoles are all
aligned in the range direction of $\cA$, the cross-range polarization
image $\vp$ (obtained by solving the linear system
\eqref{eq.systmultifreq} and depending of the imaging point $\vvy$)
satisfies the two following estimates:
\begin{itemize}
\item If the imaging point is the dipole location, i.e. $\vvy=\vvy_i$ we
have
\begin{equation}\label{eq.rangeplarecover}
\|\vp-\vp_i\| = \cO \Big(  \frac{c} {B \min \limits_{i\neq j}|\eta_i-\eta_j|}\Big)+o(1),
\end{equation}
\item If the imaging point range is different from any of the dipole
ranges, $\vvy=(\vy_*,L+\eta)\neq \vvy_j$ (for all $j=1,\ldots,N$),
\begin{equation}\label{eq.drecraspolarange}
\|\vp\| =   \cO \Big(  \frac{c} {B \min \limits_{j=1,\ldots,N}|\eta-\eta_j|} \Big)+o(1).
\end{equation}
\end{itemize}
\end{proposition}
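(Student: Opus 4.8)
The plan is to reduce the multifrequency polarization recovery to the single-frequency analysis of Proposition~\ref{prop.polacrossactiv} by exploiting the fact that integrating in frequency commutes with the block structure, and then to re-run the argument of that proposition with the single-frequency bounds replaced by their frequency-integrated analogues. First I would record that the matrix
\[
\mathbb{A}(\vvy;k)\equiv \int_{|\omega_0-\omega|<B/2}\md\omega\,\mH_{1:2,1:2}\Big(\vvy,\vvy;\frac{\omega}{c}\Big)
\]
is, by the asymptotic \eqref{eq.asymptA} (more precisely its $2\times2$ principal block) applied uniformly for $k=\omega/c$ in the band, equal to $\displaystyle B\,\frac{a^2}{16\pi L^2}\mI + B\,o(a^2/L^2)$; exactly as in the proof of Proposition~\ref{prop.polacrossactiv}, a Neumann series argument shows $\mathbb{A}(\vvy;k)$ is invertible in the Fraunhofer regime with $\|\mathbb{A}(\vvy;k)^{-1}\| = \cO(L^2/(a^2 B))$. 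This is the observation that the paper already flags as ``straightforward to check'' in the text preceding the proposition, so it requires only a uniform-in-$k$ version of the estimates already in hand.

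Next, for the case $\vvy=\vvy_i$ I would integrate the identity \eqref{eq.idendityvpi} in $\omega$ over the band. Subtracting this from the system \eqref{eq.systmultifreq} defining $\vp$, and using $\mathbb{A}(\vvy_i;k)^{-1}$, yields
\[
\|\vp-\vp_i\|\le \|\mathbb{A}(\vvy_i;k)^{-1}\|\left[\Big\|\int\md\omega\big[\textstyle\sum_{j\ne i}\mH(\vvy_i,\vvy_j;\tfrac{\omega}{c})\vvp_j\big]_{1:2}\Big\| + \Big\|\int\md\omega\,\mH_{1:2,3}(\vvy_i,\vvy_i;\tfrac{\omega}{c})\,p_{i,z}\Big\|\right].
\]
For the second (coupling-to-$p_z$) term, the $(1{:}2,3)$ block of $\widetilde{\mH}(\vvy_i,\vvy_i;k)$ vanishes to leading order by Lemma~\ref{lem.crossrange1}, so after integration it contributes $B\,o(a^2/L^2)$. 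For the first term the key point is that the frequency integral of the off-diagonal point-spread matrix $\mH(\vvy_i,\vvy_j;\omega/c)$, for $\vvy_i=(\vy_*,L+\eta_i)$ and $\vvy_j=(\vy_*,L+\eta_j)$ with the same cross-range, produces the $\operatorname{sinc}$ factor $\operatorname{sinc}(B|\eta_i-\eta_j|/(2c))$ exactly as in \eqref{eq.sinscardinal} of Proposition~\ref{eq.proprang}: the $\vx_r$-dependence of the integrand is non-oscillatory in $\omega$ (the projectors are $k$-independent and the paraxial phase in $\vx_r$ cancels when the cross-ranges agree), while the factor $\exp[\mi\omega(\eta_j-\eta_i)/c]$ integrates to $B\,\operatorname{sinc}(\cdots)$ times a unit-modulus constant. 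Combining with $\|\mH(\vvy_i,\vvy_j;\cdot)\|=\cO(a^2/L^2)$ uniformly (the $\vx_r$-integral over $\cA$ of a product of projectors) gives that the first term is $B\,\frac{a^2}{L^2}\cO\!\big(c/(B\min_{j\ne i}|\eta_i-\eta_j|)\big)+B\,o(a^2/L^2)$. Multiplying by $\|\mathbb{A}(\vvy_i;k)^{-1}\|=\cO(L^2/(a^2B))$ collapses all the $a,L,B$ powers and leaves \eqref{eq.rangeplarecover}.

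For the case $\vvy=(\vy_*,L+\eta)$ with $\eta\ne\eta_j$ for all $j$, I would simply use $\|\vp\|\le\|\mathbb{A}(\vvy;k)^{-1}\|\,\|\mathcal{I}_{\text{KM}}(\vvy)\|$, invoke Proposition~\ref{eq.proprang} (summed over the $N$ dipoles by linearity) for the decay $\|\mathcal{I}_{\text{KM}}(\vvy)\| = \|\cI(\vvy)\|_{1:2} = B\frac{a^2}{L^2}\big(\cO(c/(B\min_j|\eta-\eta_j|))+o(1)\big)$, and again cancel the prefactors against $\|\mathbb{A}(\vvy;k)^{-1}\|=\cO(L^2/(a^2B))$ to obtain \eqref{eq.drecraspolarange}. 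The main obstacle I anticipate is making the $\operatorname{sinc}$ extraction for the \emph{off-diagonal} terms $\mH(\vvy_i,\vvy_j;\omega/c)$ fully rigorous: one must check that the error terms in the Fraunhofer expansion \eqref{eq.decompimage} of $\widetilde{\mH}$ versus $\mH$ remain under control \emph{uniformly} in $\omega$ across the band after integration (so that $\int\md\omega$ of an $o(a^2/L^2)$ term is genuinely $B\,o(a^2/L^2)$ and not something that degrades), and that the residual $k$-dependence hidden in the $\cO(1/(kL))$ corrections to the dyadic Green function in \eqref{eq.dyadic} does not spoil the clean $\operatorname{sinc}$ behaviour — but since $kh=\cO(1)$ and the band is assumed to satisfy the Fraunhofer scalings uniformly, these corrections only contribute lower-order terms absorbed into the $o(1)$.
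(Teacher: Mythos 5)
Your proposal is correct and follows essentially the same route as the paper: integrate the single-frequency identity \eqref{eq.idendityvpi} over the band, bound the inverse of the integrated $2\times2$ block by $\cO(L^2/(a^2B))$ as in Proposition~\ref{prop.polacrossactiv}, control the coupling term via the range decay (the $\operatorname{sinc}$ of Proposition~\ref{eq.proprang}, valid since the dipoles share the same cross-range), absorb the $(1{:}2,3)$ block into the $o(1)$, and for the off-dipole case combine $\|\vp\|\le\|\mathbb{A}^{-1}\|\,\|\mathcal{I}_{\text{KM}}(\vvy)\|$ with \eqref{eq.asymprange1}. The uniform-in-$\omega$ control of the Fraunhofer error terms that you flag is handled in the paper exactly as you suggest, by the standing assumption that the asymptotic regime holds uniformly over the frequency band.
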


\begin{proof}
We introduce for convenience the matrix
$$
\mH_B(\vvy, \vvy')=\int_{|\omega - \omega_0|<B/2}\md \omega\, \mH \Big(\vvy,\vvy;\frac{\omega}{c} \Big).
$$
When $\vvy=\vvy_i$, the cross-range component $\vp_i$ of the
polarization satisfies
\begin{equation}\label{eq.defvpi}
[\mH_B(\vvy_i,\vvy_i)]_{1:2,1:2}\, \vp_i =\mathcal{I}_{\text{KM}}(\vvy_i) -
[\mH_B(\vvy_i,\vvy_i)]_{1:2,3} \, p_{i,z} - \Big[ \sum_{j=1,j\neq
i}^{N}\mH_B(\vvy_i,\vvy_j)\vvp_j \Big]_{1:2},
\end{equation}
which is the integral over the frequency band of \eqref{eq.idendityvpi}.
Thus from the systems \eqref{eq.defvpi} and \eqref{eq.systmultifreq}
satisfied by $\vp_i$ and $\vp$ one gets
\[
\|\vp-\vp_i\| \leq  \|[\mH_B(\vvy_i,\vvy_i)]_{1:2,1:2}^{-1} \|  \,
\left(\Big\| \big[
\sum_{j\neq i}^{N}\mH_B(\vvy_i,\vvy_j)\vvp_j \big]_{1:2} \Big\| + |
[\mH_B(\vvy_i,\vvy_i)]_{1:2,3}\, p_{i,z}|\right).
\]
By proceeding as in the proof of proposition~\ref{prop.polacrossactiv},
one can show by integrating over the frequency band that $\|[\mH_B(\vvy_i,\vvy_i)]_{1:2,1:2}^{-1}\|
=\cO(L^2/(a^2 B))$. Since the dipoles are aligned, the contribution from the other dipoles can be
controlled using proposition~\ref{eq.proprang}, giving 
\[ 
\Big\| \big[
\sum_{j\neq i}^{N}\mH_B(\vvy_i,\vvy_j)\vvp_j \big]_{1:2} \Big\| =
 \frac{Ba^2}{L^2} \left( \cO\Big(\frac{c}{ \min
\limits_{j\neq i}|\eta_i-\eta_j|}\Big)+o(1)\right).
\] 
Moreover, the asymptotic
\eqref{eq.asymptA} bounds the error due to only
looking at the cross-range component of the system:
$|[\mH_B(\vvy_i,\vvy_i)]_{1:2,3}\, p_{i,z}|=o(Ba^2/L^2)$. Combining the
last three estimates gives the desired result
\eqref{eq.rangeplarecover}. 

Finally, the asymptotics \eqref{eq.drecraspolarange} follows immediately
from $\|[\widetilde{\mH}_B(\vvy,\vvy)]_{1:2,1:2}^{-1}\| = \cO(L^2/(a^2
B))$ and the decay rate in range of the imaging function
\eqref{eq.asymprange1}.
\end{proof}

\subsubsection{Depth oscillation artifact and its suppression}
\label{sec:suppression}
It is known in acoustics that the reflection coefficient of a point
scatterer can only be recovered up to a complex phase, see e.g.
\cite{Nov::15:ISI}. A similar phenomenon is observed here:
if the range position of a scatterer is not known perfectly, the estimation 
of the cross-range polarization $\vp$ oscillates in range.
This can be easily seen by considering a single dipole at location
$\vvy_*$ and with polarization vector $\vvp_*$. Rewriting the imaging
function $\cI(\vy_*,L+\eta)$ for points with same cross-range as the
dipole gives together with \eqref{eq.Imagmf} and \eqref{eq.sinscardinal}
\begin{equation}
\label{eq:sincphase}
\cI(\vvy) =  \frac{B e^{\mi \, \omega_0 (\eta_*-\eta)/c } }{(4 \pi
L)^2}\, \operatorname{sinc}\left(\frac{B (\eta_*-\eta)}{2 \, c}\right)
\int_{\cA} \md\vx_r\, \mP(\vvx_r,\vvy)  \mP(\vvx_r,\vvy_*)  \, \vvp_* +
\, B \,  o\left(\frac{a^2}{L^2}\right).
\end{equation}
Clearly the presence of the complex exponential $\exp[\mi \, \omega_0
(\eta_*-\eta)/c ]$ and the $\operatorname{sinc}$ causes the image
$\cI(\vy_*,L+\eta)$ to oscillate in $\eta$. This oscillation is not
taken into account if we solve the linear system
\eqref{eq.systmultifreq} for $\vp$, indeed the system matrix does not
oscillate but the right hand side does. We point out that in the case
$\eta = \eta_*$, there are no such oscillations, which explains why this
error is not present in the error analysis for $\vp$ assuming a known
dipole position \eqref{eq.rangeplarecover}. Also the oscillations are
relevant because their length scale is close to $c/B$, the resolution in
depth.

To deal with this artifact, we estimate $\vp$ by solving
\eqref{eq.systmultifreq} and then we fix the phase of one component of
$\vp$. In the following, we have arbitrarily chosen to enforce that the
first component be real and positive, that is $\arg{p_x} =0$. This can
be achieved by post-processing the solution of
\eqref{eq.systmultifreq} by the operation $(\overline{p}_x/|p_x|)\vp$.
This operation is problematic for small $|p_x|$, in which case we can
use the $y$ component or we could also regularize using:
$(\overline{p}_x/(|p_x|+\delta))\vp$, for a small $\delta>0$.

\subsubsection{Numerical illustration of depth oscillation suppression}
Here we illustrate the depth oscillation and its correction in the same
setup as that in figures \ref{fig.illcond} and  \ref{fig.goodcond}.  In
figure \ref{fig.phase} we display $\|\vp\|$ in color scale and
$\Re(p_x)$ with arrows the plane $y=0$. The central frequency is
$f_0=2\pi \omega_0$ and the bandwidth $B/(2\pi)$ are both equal to
$2.4$GHz.  Thus, the ratio $2\pi c/B=\lambda_0$ gives in both images the
size of the focal spot in the range direction $z$. On the left, we
display $\operatorname{Re}(p_x)$ without any phase correction. The
dipole position and magnitude are accurately imaged, but the dipole polarization vector
oscillates in range direction inside the focal spot with a period
$c/f_0=\lambda_0$.  Hence the reconstruction is unstable. In the right
figure, we apply the correction $(\overline{p}_x/|p_x)\vp$ to both the
reconstructed $\vp$ and the true $\vp_*$ (white arrows). The correction
suppresses the phase oscillation and gives a stable reconstruction of
$\vp_*$, up to a complex sign.
\begin{figure}[h!]
\begin{center}
\includegraphics[width=6.1cm]{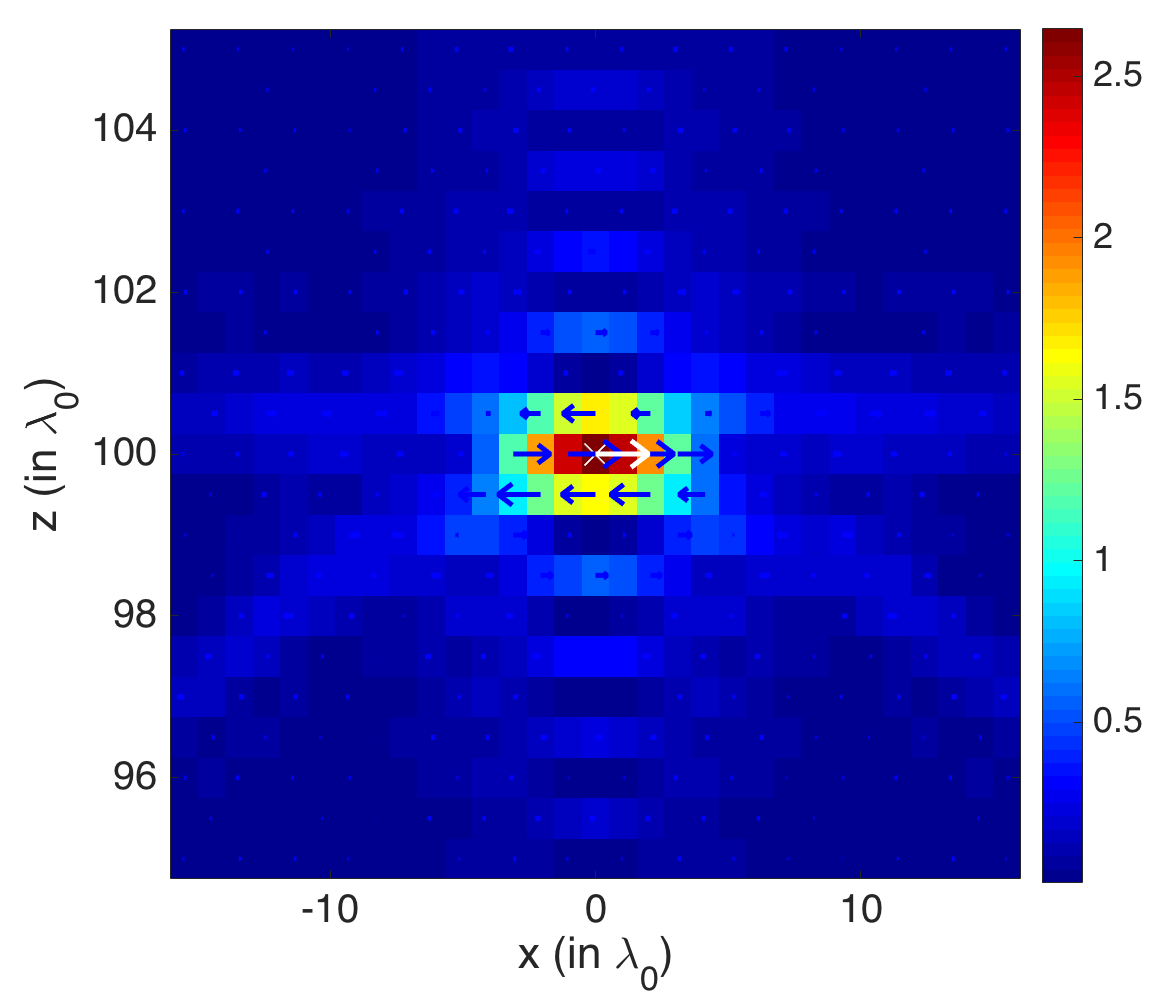}
\includegraphics[width=6.3cm]{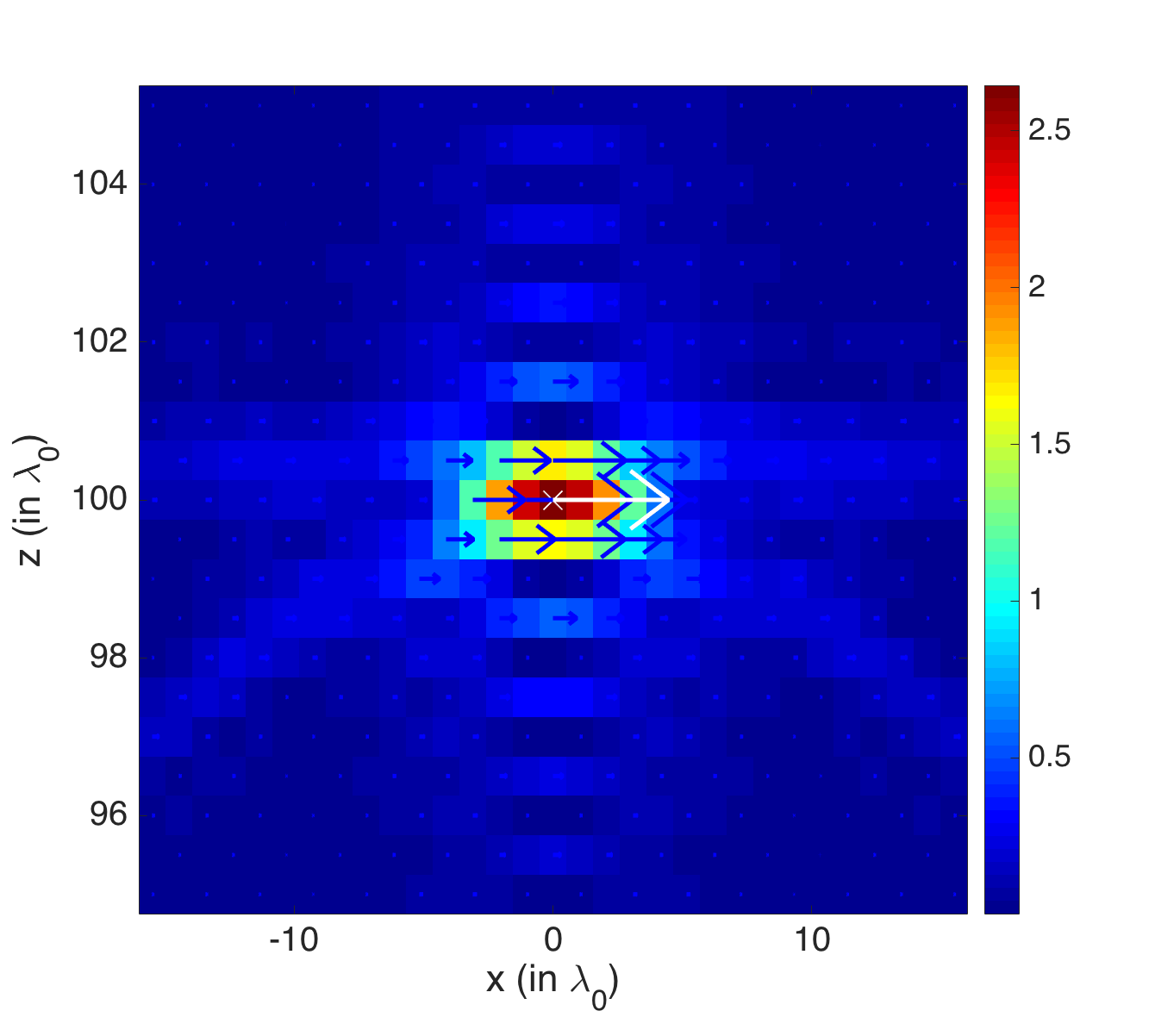}
\end{center}
\caption{Image of $\|\vp\|$ in the plane $y=0$  (color scale, both
figures). The blue arrows represent $\operatorname{Re}(p_x)$ without
phase correction (left) and with phase correction (right). The white
arrow represents $\operatorname{Re}(p_{x,*})$, with the same correction
applied.}
\label{fig.phase}
\end{figure}

\subsection{Numerical experiments for several dipoles}
\label{sec:numexp:passive2}
In figures \ref{fig.crosrrange1}, \ref{fig.crosrrange2} and
\ref{fig.range1}, we consider the case of three dipoles placed at
$\vvy_1 = (-7 \lambda_0,\, 7\lambda_0 ,\, L)$, 
$\vvy_2 = (7 \lambda_0,\, 7\lambda_0 ,\, L)$ and
$\vvy_3 = (2\lambda_0,-2\lambda_0 , \, L+7\lambda_0)$ 
with respective polarization vectors
$\vvp_1=(2 , 1-2\mi , 1-\mi)$,
$\vvp_2=(-2 , 2-2\mi  ,1+\mi)$, and
$\vvp_3=(1, 2+2\mi, 1-\mi)$.
The cross-range
polarization vector components have norms $\|\vp\| \approx 3$, $3.5$ and
$3$, respectively. The array $\cA$, the bandwidth $B$ and the central
frequency $f_0$  are identical to the ones used in figure
\ref{fig.phase}. We visualize with the same convention as in the
previous figures the reconstruction of the positions and the
polarization vectors obtained by solving the linear system
\eqref{eq.systmultifreq} and applying the phase correction of
section~\ref{sec:suppression}.  Figures \ref{fig.crosrrange1} and
\ref{fig.crosrrange2} illustrate the reconstruction and the resolution
of $\|\vp\|$ in the cross-range of each dipole. Once again, the focal
spot is given by the Rayleigh criterion: $\lambda_0 L/a=5 \lambda_0$. In
each case, we observe a stable reconstruction of $\|\vp\|$ and of the
complex vector $(\overline{p}_x/|p_x|)\vp$. Figure \ref{fig.range1}
illustrates the range resolution of each dipole. The size of the focal
spot is again of order $2\pi c/B=\lambda_0$. We note a stable
reconstruction of $\operatorname{Re}((\overline{p}_x/|p_x|) p_x)$ with
no oscillations in the range direction (as by convention
$\operatorname{Im}((\overline{p}_x/|p_x|) p_x)=0$, it is not represented
in our figures). Even if $(\overline{p}_x/|p_x|)
p_y$ is reconstructed accurately by our method, we chose not to display
it in figure~\ref{fig.range1} because the axis are $xz$.

\begin{figure}[htp]
\begin{center}
\includegraphics[width=5.9cm]{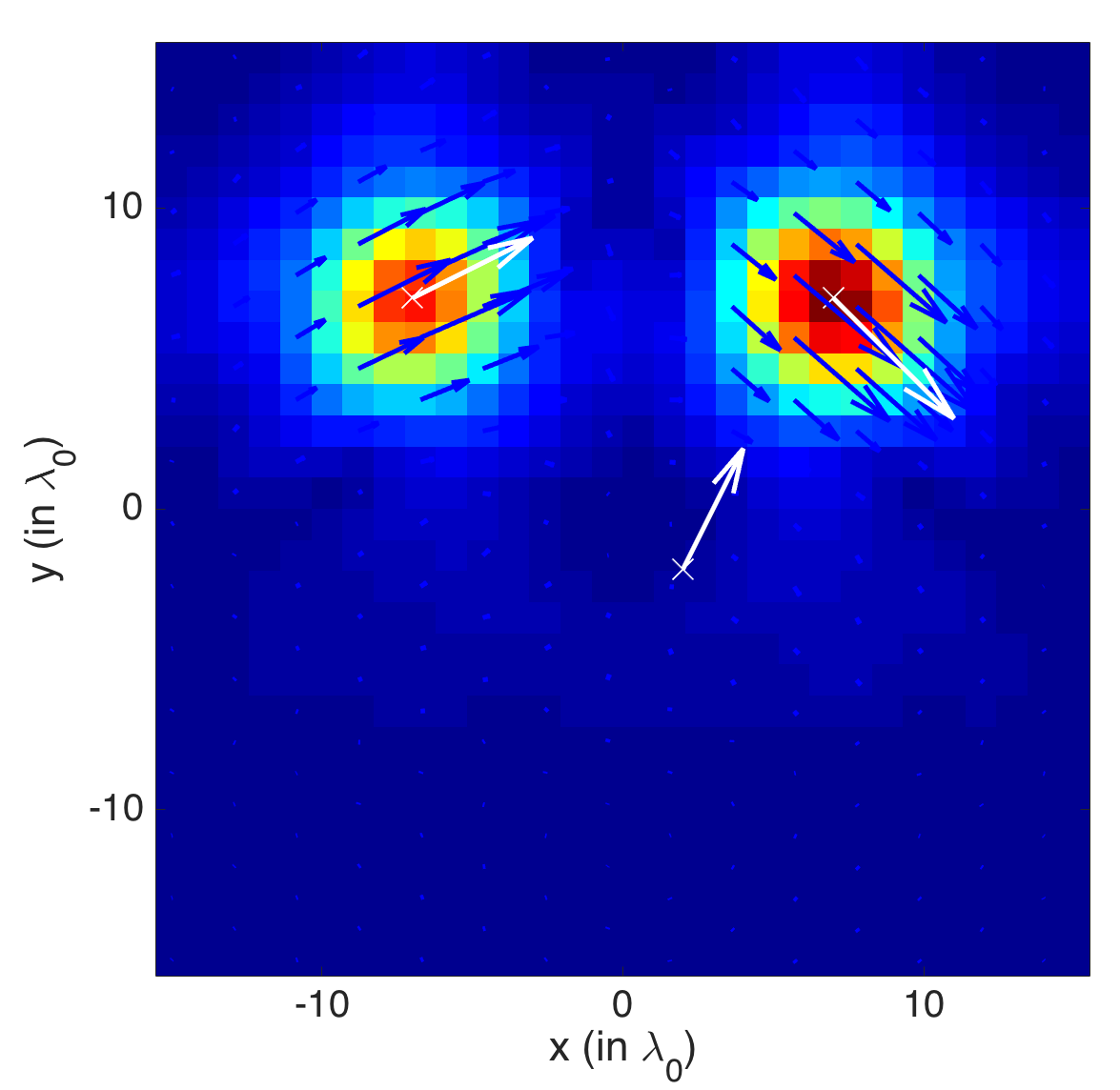}
\hspace{0.2cm}
\includegraphics[width=6.3cm]{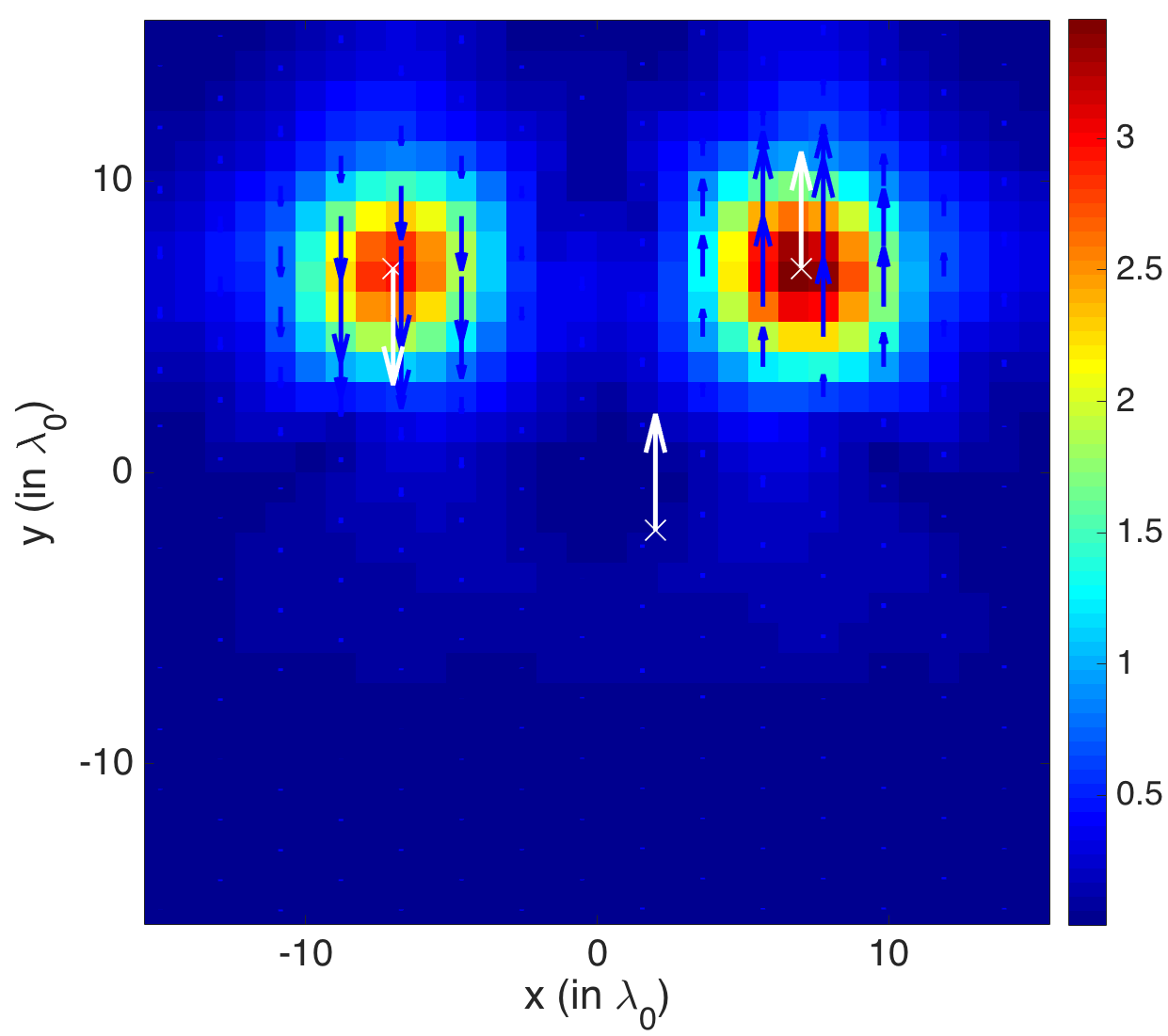}
\end{center}
\caption{Image of $\|\vp\|$ in the plane $z=0$ (color scale on both figures). The blue
arrows on the left represent 
$\operatorname{Re}((\overline{p}_x/|p_x|)\vp)$  and on the right $\operatorname{Im}((\overline{p}_x/|p_x|) \vp)$. The white arrows
represent the corresponding true quantities.} \label{fig.crosrrange1}
\end{figure}

\begin{figure}[htp]
\begin{center}
\includegraphics[width=5.9cm]{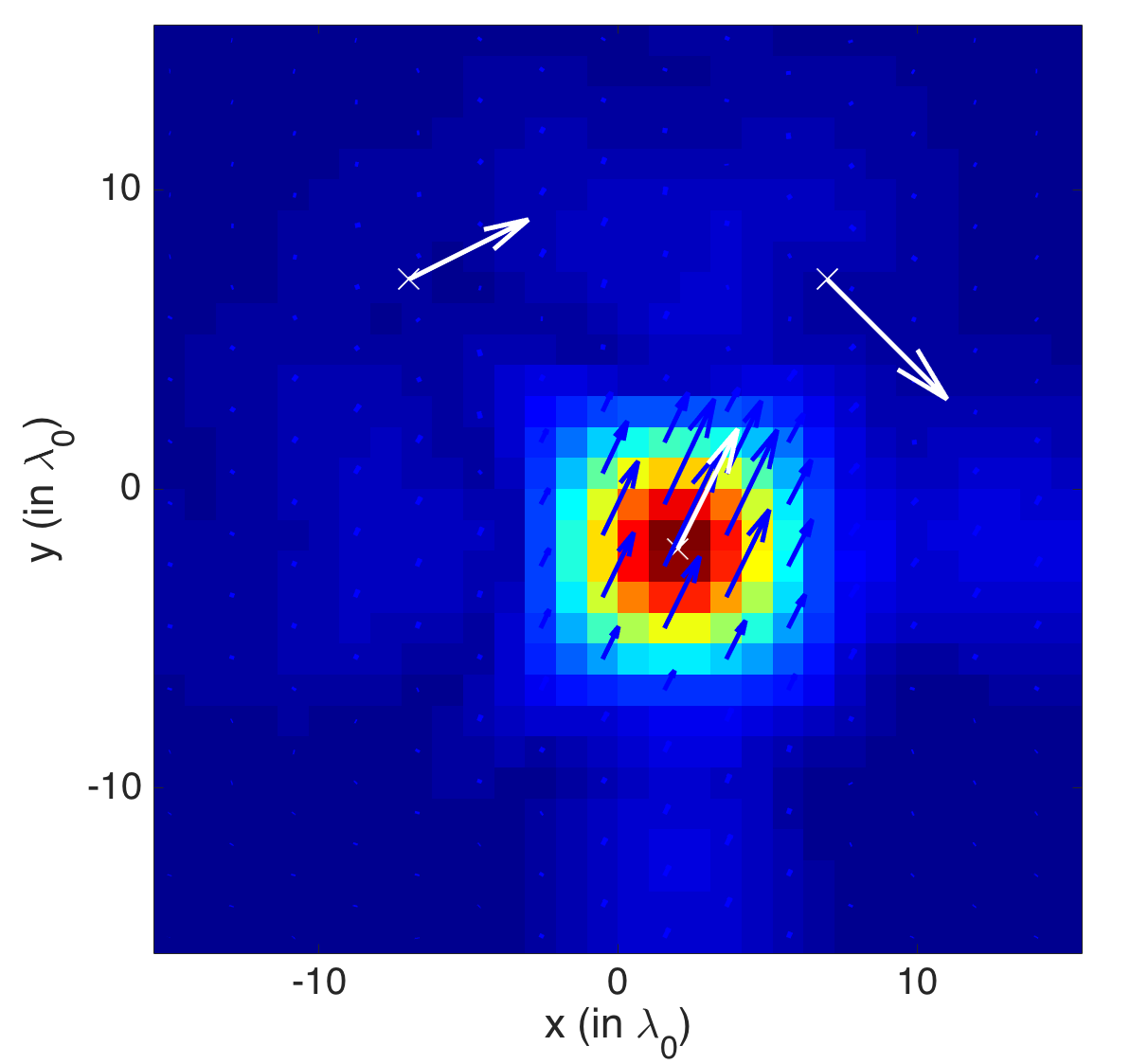}
\hspace{0.2cm}
\includegraphics[width=6.3cm]{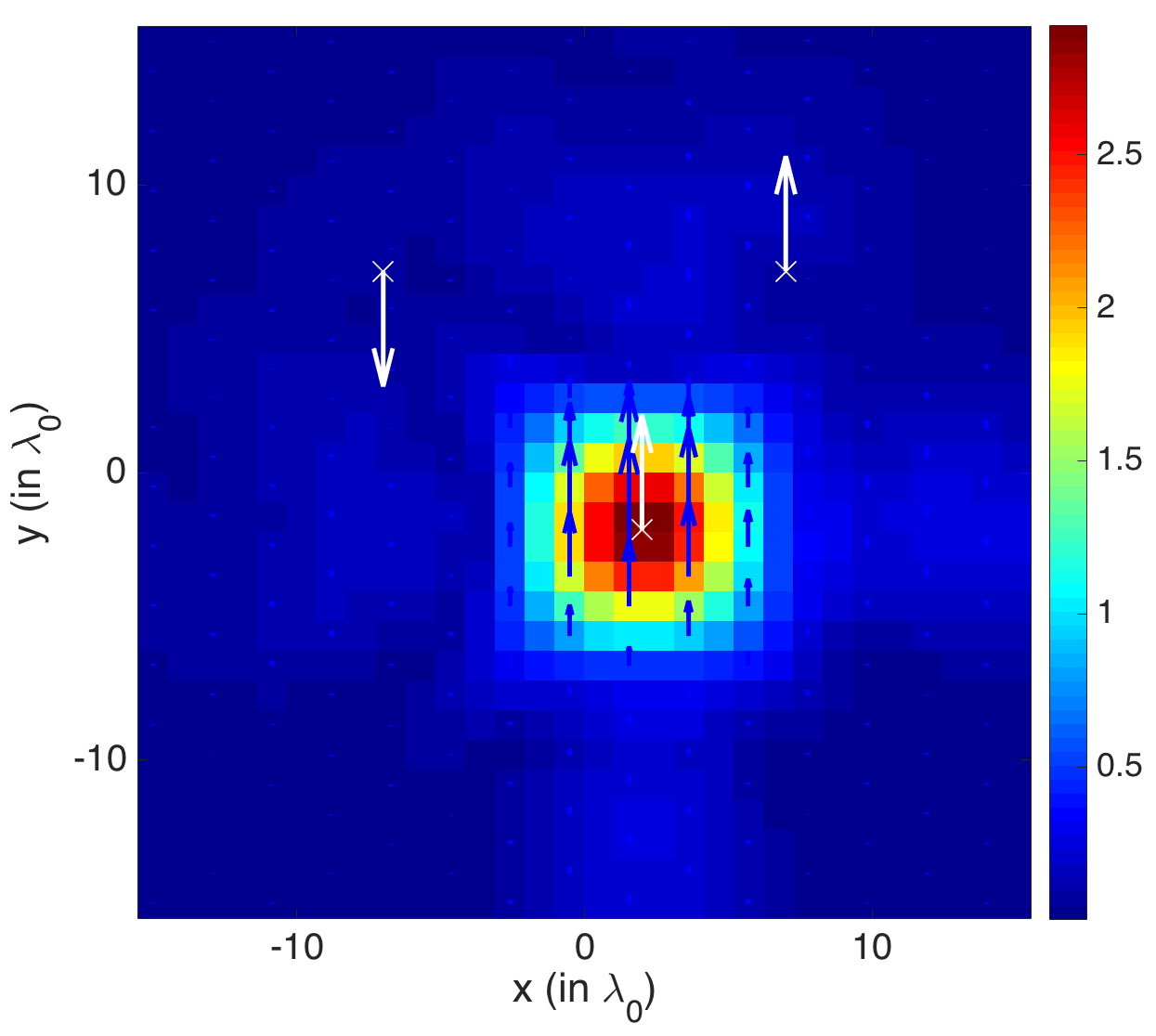}
\end{center}
\caption{Same as in the figure \ref{fig.crosrrange1} but in the plane $z=7\lambda_0$.}
\label{fig.crosrrange2}
\end{figure}

\begin{figure}[htp]
\begin{center}
\includegraphics[width=6.05cm]{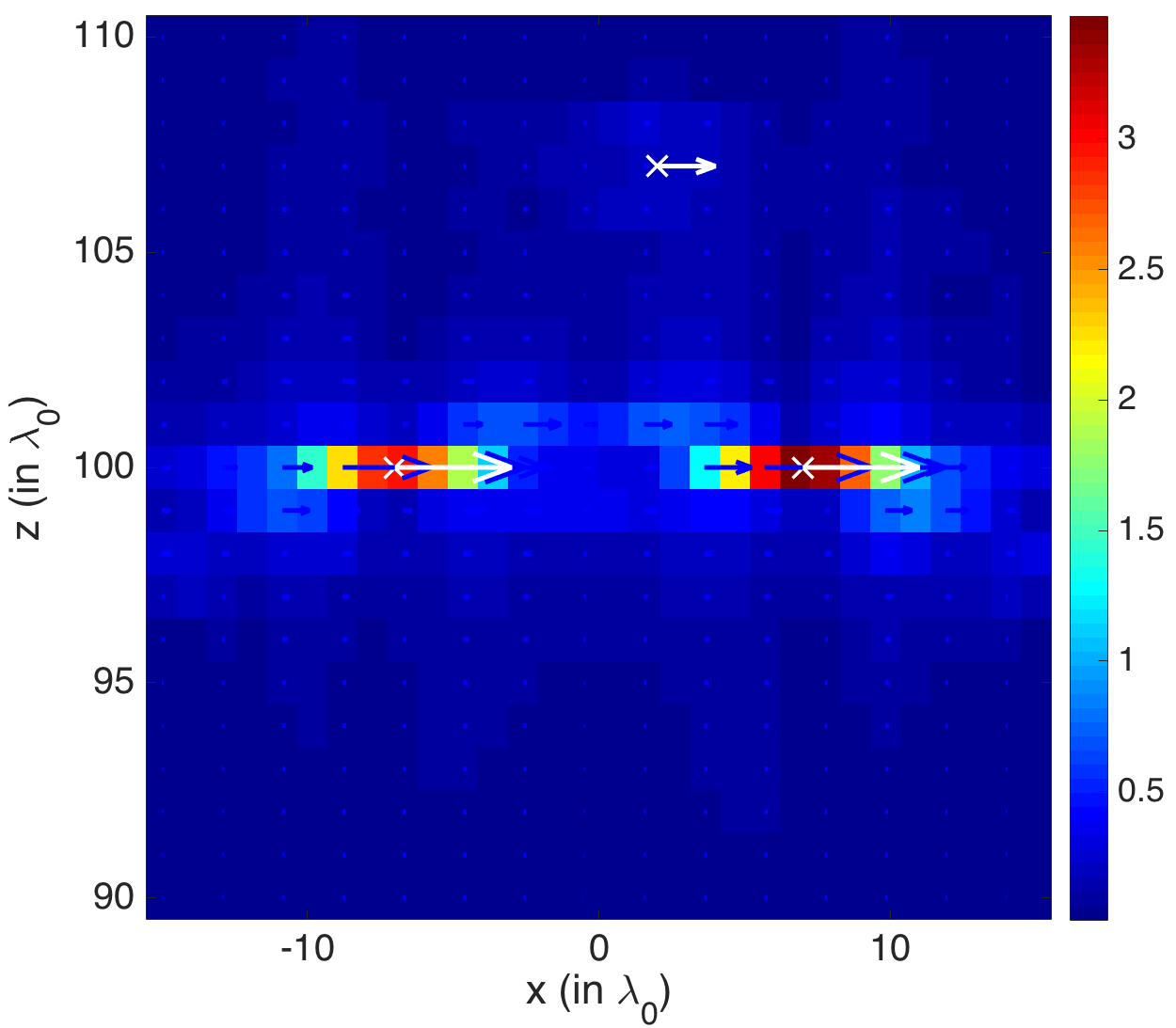}
\hspace{0.2cm}
\includegraphics[width=6.05cm]{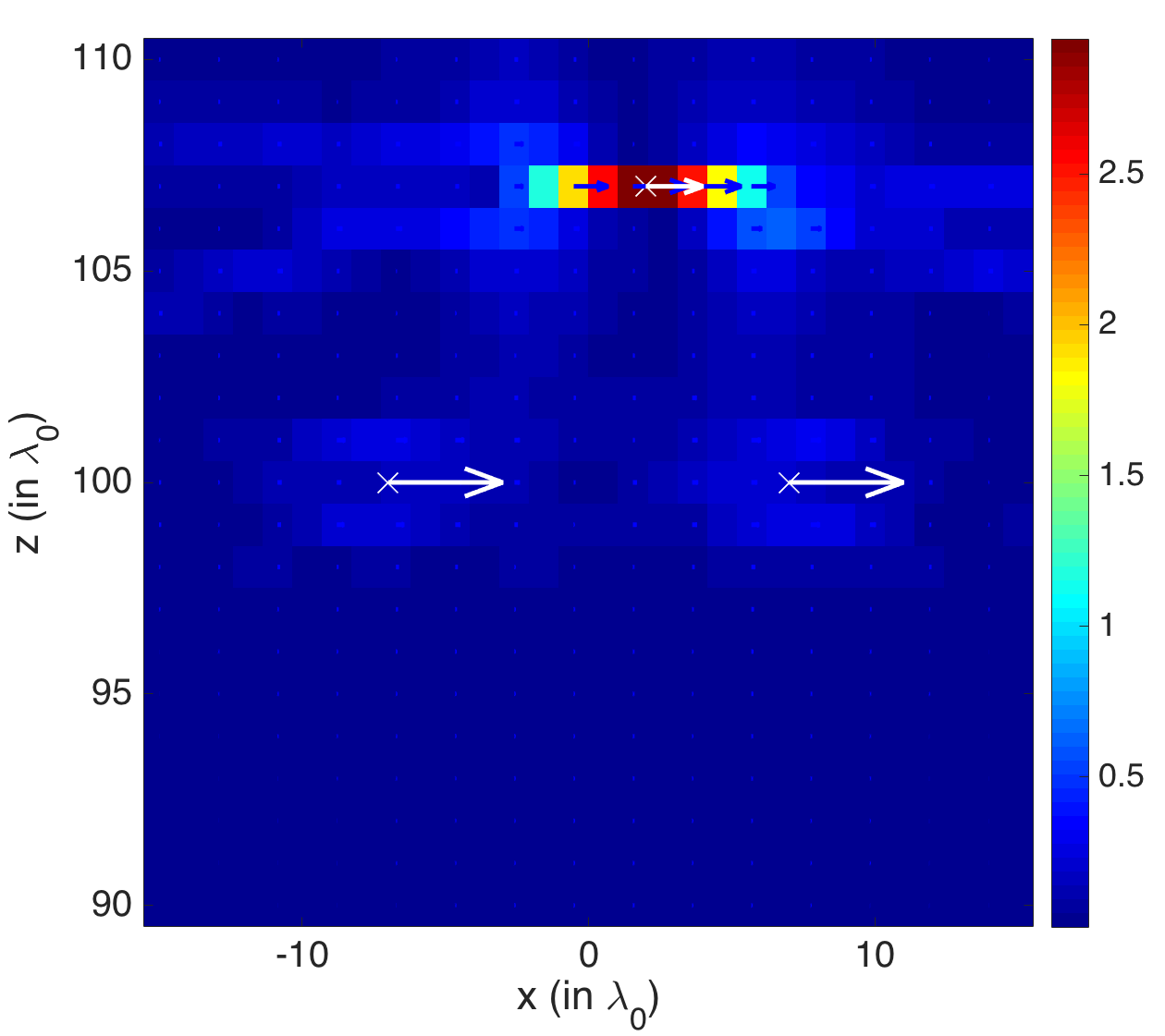}
\end{center}
\caption{Image of $\|\vp\|$ (color scale) and of
$(\overline{p}_x / |p_x|) p_x$ (blue arrows). The plane $y=7
\,\lambda_0$ is on the left and the plane $y=-2 \,\lambda_0$ is on the right.}
\label{fig.range1}
\end{figure}

\section{The active imaging problem}
\label{sec:active}
Here we focus on the imaging problem where the array is composed of
sources and receivers. The problem and the imaging function we use are
defined in section~\ref{sec:math:form:active}. The generalization of the
Kirchhoff imaging function we consider associates to each imaging point
a matrix. As we show in section~\ref{sec:cr:pola}, the cross-range
resolution estimate for the norm of this matrix coincides with the
classic estimate in acoustics. The procedure to extract the
polarizability tensor from the image is given in
section~\ref{sec:cr:polamat}. We then proceed to study the resolution in
the range direction and we obtain results that are similar to those in
acoustics (section~\ref{sec:r:pola}). The reconstructed polarizability
in range is studied in
section~\ref{sec:range:active}. As in the passive case, the image needs
to be corrected in the depth direction to suppress oscillatory
artifacts. Finally we report numerical experiments illustrating our
results in section~\ref{sec:numexp:active}.

\subsection{Mathematical formulation of the active imaging problem}
\label{sec:math:form:active}
We now consider the problem of imaging point-like scatterers using a
collocated array $\cA$ of sources and receivers within the $z=0$ plane
(see figure~\ref{fig.Frau.active}).
The field scattered by a small object located at $\vvy_*$ resulting from
an incident field $\vvE_{\mathrm{inc}}(\vvx;k)$ is
\[
 \vvE_{\mathrm{scatt}}(\vvx;k) = \mu \omega^2 \mG(\vvx,\vvy_*;k) \bGa_*
 \vvE_{\mathrm{inc}}(\vvy_*;k),
\]
where $\bGa_* \in \complex^{3\times 3}$ is the {\em polarizability
tensor} of the scatterer, satisfying $\bGa^T_* = \bGa_*$ by reciprocity,
see e.g.  \cite{Novotny:2012:PNO}. The analogue in acoustics of the
matrix $\bGa_*$ is the reflection coefficient. In general, the
polarizability tensor depends on $k$, but for our study we assume that
the dependence is weak within the frequency band we consider (this
occurs when e.g.  the scatterers are standard dielectric,
\cite{Milton}). Our goal is to image both the {\em position} and {\em
polarizability tensors} of a collection of these scatterers. We work
under a weak scattering assumption, where the scattered field from $N$
scatterers located at $\vvy_n$ with polarizability tensors $\bGa_n$,
$n=1,\ldots,N$ is given by the Born approximation (see e.g.
\cite{Born:1959:POE})
\begin{equation}
\label{eq.diffractpass}
\vvE_{\mathrm{scat}}(\vvx;k)=\mu \omega^2 \sum_{n=1}^{N}
\mG(\vvx,\vvy_n;k) \bGa_n  \vvE_{\mathrm{inc}}(\vvy_n;k).
\end{equation}
The field used to probe the medium is controlled by a distribution
$\vvp: \cA \to \complex^3$ where $\vvp(\vx_s)$ is the polarization
vector of a dipole at $\vvx_s=(\vx_s,0) \in \cA$. The electric field
generated by the array dipole distribution $\vvp(\vx_s)$ is
\begin{equation}
\label{eq:inc}
\vvE_{\mathrm{inc}}(\vvx;k)=\mu \omega^2 \int_{\cA} \md \vvx_s \mG(\vvx,\vvx_s;k)\, \vvp(\vvx_s).
\end{equation}
The data we use to image is the scattered field at the array. Since
different array dipole distributions can be used, the data that can be
collected in this setup can be thought of as the (matrix valued) array response function
$\Pi(\vx_r,\vx_s;k) \in \complex^{3\times 3}$ defined for $\vx_r,\vx_s \in \cA$ and wavenumber
$k$ such that the scattered field resulting from the array dipole
distribution $\vvp(\vvx_s)$ is
$$
\vvE_{\mathrm{scat}}(\vvx_r;k)= \mu^2 \omega^4  \int_{\cA} \Pi(
\vx_r,\vx_s;k) \vvp(\vvx_s) \, \md \vx_s.
$$ 
Combining \eqref{eq.diffractpass} and \eqref{eq:inc}, the array response
function for $N$ point-like scatterers is
\begin{equation}
\label{eq.datapassif}
 \Pi(\vx_r,\vx_s, ;k)= \sum_{n=1}^{N}   \mG(\vvx_r,\vvy_n;k) \bGa_n
 \mG(\vvy_n,\vvx_s;k),~ \text{for}~ \vx_s, \vx_r \in \cA.
\end{equation}

\begin{figure}[!ht]
\begin{center}
 \includegraphics[width=0.65\textwidth]{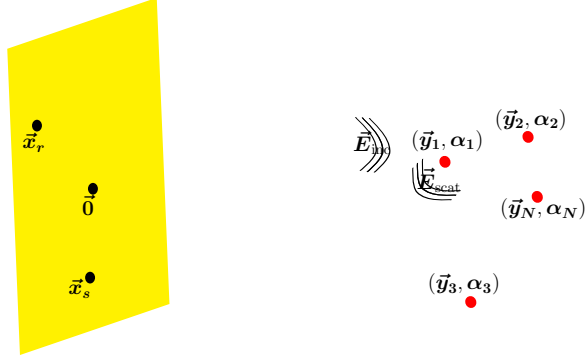}
\end{center}
 \caption{Active imaging problem description.}
 \label{fig.Frau.active}
\end{figure}

The imaging function we use is an electromagnetic version of the
Kirchhoff imaging function
\begin{equation}\label{eq.Kirchpass_general}
\bbI(\vvy;k)=\int_{\cA} \int_{\cA} \md \vx_r \, \md
\vx_s\overline{\mG(\vvx_r,\vvy;k)} \, \Pi(
\vvx_r,\vvx_s; k) \, \overline{\mG(\vvx_s,\vvy;k)},
\end{equation}
which gives a $3 \times 3$ complex matrix at each imaging point $\vvy$.
For our particular data, the image is
\begin{eqnarray}\label{eq.Kirchpass}
\bbI(\vvy;k)&=&  \sum_{n=1}^{N} \left[ \int_{\cA}   \md \vvx_r \overline{\mG(\vvx_r,\vvy;k)}  \mG(\vvx_r,\vvy_n;k) \right] \bGa_n  \left[\int_{\cA}  \md \vvx_s   \mG(\vvy_n,\vvx_s;k) \overline{\mG(\vvx_s,\vvy;k)} \right] \nonumber \\
&=& \sum_{n=1}^{N}  \mH(\vvy,\vvy_n;k) \, \bGa_n \, \mH(\vvy,\vvy_n;k)^{\top},
\end{eqnarray}
where the $3\times3$ matrix $\mH(\vvy,\vvy_n,k)$ is the ``point spread
matrix'' defined in \eqref{eq.defmatA}.

\subsection{Cross-range estimations of the position}
\label{sec:cr:pola}
To estimate the cross-range resolution of the active Kirchhoff imaging
function $\bbI(\vvy;k)$, we consider the case of one point-like
scatterer 
($N=1$) located at $\vvy_{*}=(\vy_{*}, L+\eta_*)$ with polarizability
tensor $\bGa_{*}$. We assume that the range position $L+\eta_*$ is known.
The case of multiple point-like scatterers is obtained by linearity. 
In the following we assume that $\|\bGa_n\|=\cO(1)$.  It is
convenient to introduce the block decomposition of the polarizability tensor
$$
\bGa_{*}=\begin{pmatrix} \bGa_{*;1:2,1:2}&
\bGa_{*,1:2,3}\\ \bGa_{*;3,1:2}&  \bGa_{*;3,3}\end{pmatrix},
$$
where $\bGa_{*;1:2,1:2}$, is a $2\times 2$ matrix,
$\bGa_{*;1:2,3}$ is a $2\times 1$ matrix, and so on.

The next proposition shows that the cross-range resolution of the
Kirchhoff imaging function is also given by the Rayleigh criterion
$L/(ka)$. The difference from imaging active sources is that for
scatterers, the image function decays faster. To be more precise, the
decay is in the order of  $(L/(ka \| \vy-\vy_*\|))^2$ for scatterers
versus $L/(ka \| \vy-\vy_*\|)$ for sources, see proposition
\ref{eq.propcrossrang}.

\begin{proposition}[Decreasing of the imaging function in cross-range]\label{prop.crossrangpass} 
The Kirchhoff imaging function (\ref{eq.Kirchpass}) of a point-like
scatterer located at $\vvy_{*}=(\vy_{*}, L+\eta_{*})$ and evaluated at
$\vvy = (\vy,L+\eta_*)$ satisfies
\begin{equation}\label{eq.asymppas1}
 \|\bbI(\vvy;k)\| \leq   \frac{ a^4}{L^4}  \left[  \cO  \left( \Big(
 \frac{ L}{  a \, k   \left\|\vy-\vy_* \right\|}\Big)\right)^2+o(1) \right],  \  \forall \vvy=(\vy,L+\eta_*) \ \mbox{ with } \vy\neq \vy_*,
\end{equation}
where the $o(1)$ is explicitly given by
$\cO(a^4\Theta_a^2/L^4)+\cO(\Theta_b^2)$. When the shape of the array
$\cA$ is a disk of radius $a$, one has
\begin{eqnarray}\label{eq.asymppas2}
\bbI(\vvy_*;k)=\frac{  a^4}{(16 \pi)^2 L^4} \begin{pmatrix}  \bGa_{*;1:2,1:2}& \boldsymbol {0} \\  \boldsymbol {0} & 0 \end{pmatrix} +o\left(\frac{a^4}{L^4}\right).
\end{eqnarray}
\end{proposition}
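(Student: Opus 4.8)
The plan is to mimic the structure of the passive-case proof of Proposition~\ref{eq.propcrossrang}, but now tracking the fact that the point-spread matrix $\mH$ appears twice (once on each side of $\bGa_n$) in the active imaging function \eqref{eq.Kirchpass}. First I would specialize \eqref{eq.Kirchpass} to $N=1$, so that $\bbI(\vvy;k) = \mH(\vvy,\vvy_*;k)\,\bGa_*\,\mH(\vvy,\vvy_*;k)^\top$. Replacing each factor of $\mH$ by its Fraunhofer expansion \eqref{eq.asymtAg}, i.e. $\mH(\vvy,\vvy_*;k) = \widetilde{\mH}(\vvy,\vvy_*;k) + \mE(\vvy,\vvy_*;k)$ with $\mE = \cO(a^4\Theta_a/L^4) + \cO(a^2\Theta_b/L^2)$, and expanding the product, the leading term is $\widetilde{\mH}\,\bGa_*\,\widetilde{\mH}^\top$ and the remainder is controlled using submultiplicativity of the matrix norm together with the bounds $\|\widetilde{\mH}(\vvy,\vvy_*;k)\| = \cO(a^2/L^2)$ (from Lemma~\ref{lem.crossrange2} applied with $\vy\neq\vy_*$, or from Lemma~\ref{lem.crossrange1} at $\vvy=\vvy_*$) and $\|\bGa_*\| = \cO(1)$. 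This gives a cross term of size $\cO((a^2/L^2)(a^4\Theta_a/L^4 + a^2\Theta_b/L^2))$ and a quadratic-in-$\mE$ term that is even smaller; one then checks these combine into $(a^4/L^4)(\cO(a^4\Theta_a^2/L^4) + \cO(\Theta_b^2))$ as claimed, using $\Theta_b \ll 1$.

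Next, for the decay estimate \eqref{eq.asymppas1}, I would apply Lemma~\ref{lem.crossrange2} directly: it gives $\|\widetilde{\mH}(\vvy,\vvy_*;k)\| = (a^2/L^2)\,\cO(L/(a\,k\|\vy-\vy_*\|))$. Since $\|\widetilde{\mH}\,\bGa_*\,\widetilde{\mH}^\top\| \leq \|\widetilde{\mH}\|^2\,\|\bGa_*\|$ and $\|\bGa_*\| = \cO(1)$, the leading term is bounded by $(a^4/L^4)\,\cO((L/(a\,k\|\vy-\vy_*\|))^2)$, which is precisely the faster quadratic decay promised in the statement. The remainder terms from the previous paragraph furnish the $o(1)$ piece. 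Here one should be slightly careful that $\|\widetilde{\mH}^\top\| = \|\widetilde{\mH}\|$ for the relevant matrix norms (true for the induced $2$-norm, and otherwise adjust by a norm-equivalence constant absorbed into the $\cO$).

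For the value at the scatterer location \eqref{eq.asymppas2}, I would invoke Lemma~\ref{lem.crossrange1}: for a disk array, $\widetilde{\mH}(\vvy_*,\vvy_*;k) = \frac{a^2}{16\pi L^2}\,\diag(1,1,0) + \cO(a^4/L^4) + \cO(a^2 b/L^3)$. Write $D = \diag(1,1,0)$ and $\widetilde{\mH}(\vvy_*,\vvy_*;k) = \frac{a^2}{16\pi L^2} D + R$ with $R$ the stated remainder. Then the leading term of $\bbI(\vvy_*;k)$ is $\frac{a^4}{(16\pi)^2 L^4}\,D\,\bGa_*\,D^\top$, and since $D$ is the orthogonal projector onto the first two coordinates, $D\,\bGa_*\,D^\top = \begin{pmatrix} \bGa_{*;1:2,1:2} & \boldsymbol{0} \\ \boldsymbol{0} & 0 \end{pmatrix}$. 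The cross terms $\frac{a^2}{16\pi L^2}(D\bGa_* R^\top + R\bGa_* D^\top)$ and the $R\bGa_* R^\top$ term are each $o(a^4/L^4)$ because $\|R\| = \cO(a^4/L^4) + \cO(a^2 b/L^3) = o(a^2/L^2)$ (using $b \ll L$); combined with the $\mE$-remainder from the Fraunhofer expansion of $\mH$ versus $\widetilde{\mH}$, all error terms collapse to $o(a^4/L^4)$.

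The main obstacle is purely bookkeeping: one must carefully collect the many $\cO$-terms arising from expanding a product of two matrices each of which is itself a sum of a leading piece and several small remainders, and verify that the dominant remainder is genuinely $o(1)$ relative to the claimed leading order $a^4/L^4$ under all four Fraunhofer scalings (in particular using $\Theta_b \ll 1$ and $b \ll L$, and $1 \ll \Theta_a \ll L^2/a^2$ to control $a^4\Theta_a^2/L^4$). There is no new analytic difficulty beyond what Lemmas~\ref{lem.crossrange2} and~\ref{lem.crossrange1} already supply; the quadratic decay is an automatic consequence of $\mH$ appearing on both sides, and the structural form of \eqref{eq.asymppas2} follows from the projector identity $D\bGa_* D^\top = \begin{pmatrix} \bGa_{*;1:2,1:2} & \boldsymbol{0} \\ \boldsymbol{0} & 0 \end{pmatrix}$.
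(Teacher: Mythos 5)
Your proposal is correct and follows essentially the same route as the paper: specialize \eqref{eq.Kirchpass} to $N=1$, bound $\|\bbI(\vvy;k)\|$ by $\cO(\|\mH(\vvy,\vvy_*;k)\|^2)$ using $\|\bGa_*\|=\cO(1)$ and $\|\mH^\top\|=\|\mH\|$, expand $\mH=\widetilde{\mH}+\mE$ via Proposition~\ref{prop.kirchfraun}, and invoke Lemma~\ref{lem.crossrange2} for the quadratic decay and Lemma~\ref{lem.crossrange1} (disk array) for the value at $\vvy_*$ with the projector identity giving the block structure. The only cosmetic difference is in the bookkeeping of the explicit $o(1)$: the paper obtains $\cO(a^4\Theta_a^2/L^4)+\cO(\Theta_b^2)$ by squaring the sum $\|\widetilde{\mH}\|+\|\mE\|$, whereas your term-by-term expansion of the product leaves a cross term of order $\cO(a^2\Theta_a/L^2)+\cO(\Theta_b)$, which is still $o(1)$ but needs an inequality such as $(x+y)^2\le 2x^2+2y^2$ to match the stated explicit form.
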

\begin{proof}
Let $\vvy=(\vy, L+\eta_{*})$ be a point in the cross-range of the dipole
$\vvy_{*}$  with  $\vy\neq \vy_*$.  As $\| \mH(\vvy,\vvy_n,k)\|=\|
\mH(\vvy,\vvy_n,k)^{\top}\|$ and $ \bGa_*=\cO(1)$, one gets immediately
from the expression (\ref{eq.Kirchpass}) of  $\bbI(\vvy;k)$ (with $N=1$)
that
$$
\| \bbI(\vvy;k) \| \leq   \cO\big(\|\mH(\vvy,\vvy_*;k) \|^2\big) .
$$
Using proposition \ref{prop.kirchfraun}, it is straightforward to derive
the following Fraunhofer asymptotic for the matrix $\mH(\vvy,\vvy_*;k)$ 
\begin{equation}\label{eq.asympA}
\mH(\vvy,\vvy_*;k)= \widetilde{\mH}(\vvy,\vvy_*;k)+  \cO\left(\frac{a^4\Theta_a}{L^4}\right)+ \cO\left(\frac{a^2\Theta_b}{L^2}\right),
\end{equation}
which leads to (\ref{eq.asymppas1}) by applying lemma \ref{lem.crossrange2} for the asymptotic of the matrix $ \widetilde{\mH}(\vvy,\vvy_*;k)$.

For the case of a disk array $\cA$ of radius $a$, the asymptotic formula
(\ref{eq.asymppas2}) of $\bbI(\vvy_{*};k)$ follows immediately from the
asymptotic (\ref{eq.asympA}) when $\vvy=\vvy_*$ and the lemma
\ref{lem.crossrange1} for the asymptotic of the matrix $
\widetilde{\mH}(\vvy_*,\vvy_*;k)$.
\end{proof}

\subsection{Cross-range estimations of the polarizability tensors}
\label{sec:cr:polamat}
We now study the estimation of the polarizability tensors of $N$
point-like scatterers from the
Kirchhoff imaging function. We derive an error estimate by assuming that
the positions $\vvy_1, \ldots, \vvy_N$ of the point-like scatterers are known. At
$\vvy_i$ we have the following identity
\begin{equation}\label{eq.identitypolapass}
  \mH(\vvy_i,\vvy_i;k) \, \bGa_i \, \mH(\vvy_i,\vvy_i;k)=\bbI(\vvy_i;k)-\sum_{i\neq j}\mH(\vvy_i,\vvy_j;k) \, \bGa_j \, \mH(\vvy_i,\vvy_j;k)^{\top}
\end{equation}
where we use that $\mH(\vvy_i,\vvy_i;k)=\mH(\vvy_i,\vvy_i;k)^{\top}$, 
from \eqref{eq.defmatA} and the fact that $\mG(\vvx_r,\vvy_i;k)$ and $\overline{\mG(\vvx_r,\vvy_i;k)}$ commute.
Hence, if the point-like scatterers are distant enough, we expect the coupling term
(i.e. the second term in the right hand side of
\eqref{eq.identitypolapass}) to be small. Neglecting the coupling terms,
$\bGa_i$ can be obtained by solving the linear system
\begin{equation}\label{eq.systpolpass}
\mH(\vvy_i,\vvy_i;k) \, \bGa_i \, \mH(\vvy_i,\vvy_i;k)=\bbI(\vvy_i;k).
\end{equation}
In the Fraunhofer regime, this system is ill-conditioned. Indeed, from
formula \eqref{eq.asymptA}, it follows that the matrix $\mH(\vvy_i,\vvy_i;k)$ is ill-conditioned.
Hence, as for the passive imaging problem, one cannot retrieve all
entries of the polarizability tensor $\bGa_*$. To be more precise, the asymptotic formula 
\eqref{eq.asymptA} shows that in this regime the blocks $\mH_{1:2,3}$,
$\mH_{3,1:2}$ and $\mH_{3,3}$ are asymptotically smaller compared to the block
$\mH_{1:2;1:2}$. The block $\mH_{1:2;1:2}(\vvy_i,\vvy_i,k)$ can be
approximated by $(a^2/L^2)\mI$, where $\mI$ is the $2\times 2$ identity.
Hence we can stably extract from \eqref{eq.identitypolapass} the block
$\bGa_{*;1:2,1:2}$ of the polarization tensor. We give an error estimate
for this procedure in the following proposition.

\begin{proposition}
In the Fraunhofer
asymptotic regime, the system
\begin{equation}\label{eq.systpblocalpha}
\mH_{1;2,1;2}(\vvy,\vvy,k) \, \bGa_{1:2,1:2} \,
\mH_{1;2,1;2}(\vvy,\vvy,k)=\mathbb{I}_{\text{KM};1:2,1;2}(\vvy;k)
\end{equation}
is invertible and its solution is
\begin{equation}\label{eq.systalpha} 
\bGa_{1:2,1:2}=
\mH_{1;2,1;2}(\vvy,\vvy,k)^{-1}\mathbb{I}_{\text{KM};1:2,1;2}(\vvy;k)
\mH_{1;2,1;2}(\vvy,\vvy;k)^{-1}.
\end{equation}
If the array $\cA$ is a disk of radius $a$,
the estimated cross-range polarizability tensor $\bGa_{1:2,1:2}$ (which
depends on the imaging point $\vvy$) approximates the
exact one $\bGa_{i;1:2,1:2}$ in the following sense
\begin{itemize}
\item If the imaging point coincides with a dipole, i.e. $\vvy=\vvy_i$,
\begin{equation}\label{eq.asympsolsystalpha}
\| \bGa_{1:2,1:2} - \bGa_{i;1:2,1:2}\| = \cO \left( \Big( \frac{L} {a\,
k \min \limits_{j\neq i}\|\vy_i-\vy_j\|} \Big)^2\right)+   \cO\left(\frac{ b}{L}\right) + \cO\left(\frac{a^2\Theta_a}{L^2}\right)+ \cO(\Theta_b).
\end{equation}
\item If the imaging point does not coincide with a dipole in the range,
i.e. $\vy\neq \vy_j$ for all $j=1,\ldots,N$,
\begin{equation}\label{eq.drecraspolamat}
\| \bGa_{1:2,1:2}  \| = \cO \left( \Big( \frac{L} {a\, k \min
\limits_{j=1,\ldots,N}\|\vy-\vy_j\|}\Big)^2 \right)+ \cO\left(\frac{\Theta_a^2}{L^4}\right)+ \cO(\Theta_b^2) .
\end{equation}
\end{itemize}
\end{proposition}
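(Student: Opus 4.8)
The plan is to imitate, step for step, the proof of Proposition~\ref{prop.polacrossactiv}, the only two differences being that the point spread matrix $\mH$ now enters \emph{quadratically} (through $\mH\bGa\mH^{\top}$ rather than $\mH\vvp$), which squares every resolution exponent, and that one must keep track of the ``leakage'' from the small off‑diagonal blocks $\mH_{1:2,3}$, $\mH_{3,3}$ of $\mH(\vvy_i,\vvy_i;k)$. First I would establish invertibility exactly as in Proposition~\ref{prop.polacrossactiv}: by the Fraunhofer asymptotic \eqref{eq.asymptA} for a circular array, $\mH_{1:2,1:2}(\vvy,\vvy;k)=\tfrac{a^2}{16\pi L^2}\mI+o(a^2/L^2)$ (here $\mI$ the $2\times2$ identity), so $\tfrac{16\pi L^2}{a^2}\mH_{1:2,1:2}(\vvy,\vvy;k)=\mI+o(1)$ is invertible by a Neumann series in the Fraunhofer regime, with $\|\mH_{1:2,1:2}(\vvy,\vvy;k)^{-1}\|=\cO(L^2/a^2)$. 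This gives invertibility of \eqref{eq.systpblocalpha} and the formula \eqref{eq.systalpha}.

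Next I would treat the case $\vvy=\vvy_i$. Writing $\mH(\vvy_i,\vvy_i;k)$ in the block form \eqref{eq.matrixbloc} (it is symmetric, as noted before \eqref{eq.identitypolapass}) and $\bGa_i$ in its symmetric block form, a direct multiplication of $\mH\bGa_i\mH$ and extraction of the $(1{:}2,1{:}2)$ block give $[\mH\bGa_i\mH]_{1:2,1:2}=\mH_{1:2,1:2}\,\bGa_{i;1:2,1:2}\,\mH_{1:2,1:2}+\mathcal{R}$, where $\mathcal{R}$ is a sum of products each containing at least one factor $\mH_{1:2,3}$ or $\mH_{3,1:2}$. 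By \eqref{eq.asymptA} those blocks are $\cO(a^2b/L^3)+\cO(a^4\Theta_a/L^4)+\cO(a^2\Theta_b/L^2)=\tfrac{a^2}{L^2}\,o(1)$, while the remaining factors are $\cO(a^2/L^2)$ or $\cO(1)$; hence $\mathcal{R}=\tfrac{a^4}{L^4}\big[\cO(b/L)+\cO(a^2\Theta_a/L^2)+\cO(\Theta_b)\big]$. On the other hand the $(1{:}2,1{:}2)$ block of the coupling term in \eqref{eq.identitypolapass} is bounded by $\cO\big(\max_{j\ne i}\|\mH(\vvy_i,\vvy_j;k)\|^2\big)$, and \eqref{eq.asympA} together with Lemma~\ref{lem.crossrange2} give $\|\mH(\vvy_i,\vvy_j;k)\|=\tfrac{a^2}{L^2}\big[\cO\big(\tfrac{L}{a k\|\vy_i-\vy_j\|}\big)+o(1)\big]$, so the coupling contributes $\tfrac{a^4}{L^4}\,\cO\big((\tfrac{L}{a k\min_{j\ne i}\|\vy_i-\vy_j\|})^2\big)$. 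Inserting these into \eqref{eq.identitypolapass} and \eqref{eq.systpblocalpha} yields $\mH_{1:2,1:2}(\bGa_{1:2,1:2}-\bGa_{i;1:2,1:2})\mH_{1:2,1:2}=\tfrac{a^4}{L^4}\big[\cO((\tfrac{L}{ak\min_{j\ne i}\|\vy_i-\vy_j\|})^2)+\cO(b/L)+\cO(a^2\Theta_a/L^2)+\cO(\Theta_b)\big]$; multiplying on both sides by $\mH_{1:2,1:2}^{-1}$ and using $\|\mH_{1:2,1:2}^{-1}\|=\cO(L^2/a^2)$ cancels the $a^4/L^4$ prefactor and gives \eqref{eq.asympsolsystalpha}.

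For the remaining case $\vy\neq\vy_j$ for all $j$, I would start from the solution formula \eqref{eq.systalpha}: $\|\bGa_{1:2,1:2}\|\le\|\mH_{1:2,1:2}(\vvy,\vvy;k)^{-1}\|^2\,\|\bbI(\vvy;k)\|=\cO(L^4/a^4)\,\|\bbI(\vvy;k)\|$. Bounding $\|\bbI(\vvy;k)\|\le\sum_{n}\|\mH(\vvy,\vvy_n;k)\|^2\|\bGa_n\|$ via Lemma~\ref{lem.crossrange2} and \eqref{eq.asympA} (equivalently, applying Proposition~\ref{prop.crossrangpass} to each scatterer and using linearity of $\bbI$ in the scatterers) gives $\|\bbI(\vvy;k)\|=\tfrac{a^4}{L^4}\big[\cO((\tfrac{L}{ak\min_j\|\vy-\vy_j\|})^2)+o(1)\big]$, and the factor $\cO(L^4/a^4)$ then produces \eqref{eq.drecraspolamat}.

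The main (and essentially only) obstacle is the bookkeeping forced by the quadratic dependence on $\mH$: one must verify that both the off‑diagonal block leakage $\mathcal{R}$ and the coupling from the other scatterers land at order $\tfrac{a^4}{L^4}$ times an $o(1)$ or times $(L/(ak\,\mathrm{dist}))^2$ — which is precisely the order that the amplification $\|\mH_{1:2,1:2}^{-1}\|^2=\cO(L^4/a^4)$ restores to $\cO(1)$. Beyond this, the argument is a transcription of the passive result.
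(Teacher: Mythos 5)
Your proposal is correct and follows essentially the same route as the paper: invertibility of $\mH_{1:2,1:2}$ via the Fraunhofer asymptotic \eqref{eq.asymptA} (as in Proposition~\ref{prop.polacrossactiv}), the identity \eqref{eq.identitypolapass} with the off-diagonal block leakage absorbed into remainders of size $\tfrac{a^4}{L^4}\big[\cO(b/L)+\cO(a^2\Theta_a/L^2)+\cO(\Theta_b)\big]$, the coupling controlled by the squared cross-range decay of $\mH$ (Lemma~\ref{lem.crossrange2}, equivalently \eqref{eq.asymppas1}), and the amplification $\|\mH_{1:2,1:2}^{-1}\|^2=\cO(L^4/a^4)$; the off-dipole case likewise matches the paper's use of \eqref{eq.systalpha} and the decay of $\bbI$. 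Your explicit bookkeeping of the leakage term $\mathcal{R}$ simply spells out what the paper calls a ``short calculation.''
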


\begin{proof}
For simplicity we omit the dependence in $k$ in this proof.  In
Fraunhofer asymptotic regime, $\mH_{1;2,1;2}(\vvy,\vvy)$ is invertible
with $\|\mH_{1;2,1;2}(\vvy,\vvy)^{-1}\| = \cO(L^2/a^2)$ (see proposition
\ref{prop.polacrossactiv}). Thus the solution $\bGa_{1:2,1:2}$ of
\eqref{eq.systpblocalpha} is given by formula \eqref{eq.systalpha}.

In the case $\vvy=\vvy_i$, the identity \eqref{eq.identitypolapass}  and the
asymptotic \eqref{eq.asymptA} of the matrix
$\mH_{1:2,1:2}(\vvy_i,\vvy_i)$, gives after a short calculation 
that the first block $\bGa_{i;1:2,1:2}$ of the exact polarizability
tensor $\bGa_i$ satisfies the following system
\begin{flalign}\label{eq.relationalpha12}
& &&  \mH_{1:2,1:2}(\vvy_i,\vvy_i) \, \bGa_{i;1:2,1:2} \,
\mH_{1:2,1:2}(\vvy_i,\vvy_i) \nonumber\\ & &&  =\mathbb{I}_{\text{KM};1:2,1;2}(\vvy_i)-\Big[ \sum_{j\neq i} \mH(\vvy_i,\vvy_j) \, \bGa_j \, \mH(\vvy_i,\vvy_j)^{\top} \Big]_{1:2,1:2}  
+  \cO\Big(\frac{a^4 b}{L^5}\Big)+\Big(\frac{a^6\Theta_a}{L^6}\Big)+ \cO\Big(\frac{a^4\Theta_b}{L^4}\Big). \nonumber \\
\end{flalign}
We deduce that the difference between the
estimated $\bGa_{1:2,1:2}$ (satisfying \eqref{eq.systpblocalpha}) and
the true $\bGa_{i;1:2,1:2}$ (satisfying \eqref{eq.relationalpha12}) is
\begin{flalign}\label{eq.ineqestimalphai}
 &&&\|\bGa_{1:2,1:2} -\bGa_{i;1:2,1:2} \| \nonumber \\
  \leq && & \| \mH_{1:2,1:2}(\vvy_i,\vvy_i)^{-1} \|^2  \, \left[
\big\| 
\big[ \sum_{j\neq i} \mH(\vvy_i,\vvy_j) \, \bGa_j \,
\mH(\vvy_i,\vvy_j)^{\top}\big]_{1:2} \big\| +  \cO\Big(\frac{a^4
b}{L^5}\Big)+\cO\Big(\frac{a^6\Theta_a}{L^6}\Big)+ \cO\Big(\frac{a^4\Theta_b}{L^4}\Big) \right].  \nonumber
\end{flalign}
Finally, using the asymptotic formula \eqref{eq.asymppas1} to control
the terms in sum above we obtain
\[
 \|\bGa_{1:2,1:2} -\bGa_{i;1:2,1:2} \| =  \cO \Big( \Big(
 \frac{L} {a\, k \min \limits_{i\neq j}\|\vy_i-\vy_j\|} \Big)^2\Big) + \cO\left(\frac{ b}{L}\right) + \cO\left(\frac{a^2 \Theta_a}{L^2}\right)+ \cO(\Theta_b).
  \]
In the case $\vvy\neq \vvy_i$, we have
$$
\| \bGa_{i;1:2,1:2}  \| \leq   \| \mH_{1:2,1:2}(\vvy,\vvy)^{-1} \|^2 \,
\| \mathbb{I}_{\text{KM};1:2,1;2}(\vvy)\|.
$$
Thus, the asymptotic formula \eqref{eq.drecraspolamat} follows immediately
from the decay of the imaging function  $\mathbb{I}_{KM;1:2,1;2}(\vvy)$
given by \eqref{eq.asymppas1}.
\end{proof}
The asymptotic \eqref{eq.asympsolsystalpha} shows that one obtains a
good reconstruction of the cross-range polarizability tensor
$\bGa_{i;1:2,1:2}$ by solving the linear system
\eqref{eq.systpblocalpha}. The error is given by a coupling
term (contribution from the other scatterers) and remainders which are
small in the Fraunhofer regime. This coupling term decreases as $(L/(a\,
k \min \limits_{i\neq j}\|\vy_i-\vy_j\|))^2$ which is faster than the
decrease in $L/(a\, k \min \limits_{i\neq j}\|\vy_i-\vy_j\|)$ we found
in passive imaging (see proposition \ref{prop.polacrossactiv}).
The asymptotic \eqref{eq.drecraspolamat} shows that using $\|\bGa_{1:2,1:2}\|$ as an
image (as it does depend on the imaging point $\vvy$) also leads to a
stable reconstruction of the scatterer's positions, with cross-range
resolution given by the Rayleigh criterion. 

\subsection{Range estimation of the position}
\label{sec:r:pola}
We study the resolution in range by considering one point-like scatterer
located at $\vvy_*=(\vy_*,L+\eta_*)$ with known cross-range position
$\vy_*$. We assume the array response function
$\Pi(\vx_r,\vx_s;\omega/c)$ is known for $\vx_r,\vx_s \in \cA$ and on
the frequency band $[\omega_0-B/2, \omega_0+ B/2]$, with bandwidth $B$.
The Kirchhoff imaging function over this band and at a point $\vvy$ is
obtained from the single frequency Kirchhoff imaging function
\eqref{eq.Kirchpass_general} by integrating over the frequency band
$$
\mathbb{I}_{\text{KM}}(\vvy)=\int_{|\omega-\omega_0|<B/2} \md \omega \,
\mathbb{I}_{\text{KM}}\Big(\vvy;\frac{\omega}{c}\Big).
$$ 
As in the passive case, we assume that the Fraunhofer asymptotics
(section \ref{sub.asympfraun}) hold uniformly over the frequency band
$[\omega_0-B/2, \omega_0+ B/2]$.  The next proposition shows that as in
the passive imaging case, the range resolution is $c/ B$. 

\begin{proposition}[Imaging function decrease in the range direction]
\label{eq.proprangmat}
When the array $\cA$ is a disk of radius $a$,
the Kirchhoff imaging function \eqref{eq.image}  of the dipole $\vvy_{*}=(\vy_{*},
L+\eta_{*})$ satisfies for all $\vvy=(\vy_*,L+\eta)$  with $\eta \neq
\eta_*$,
\begin{equation}\label{eq.asymprangepass1}
 \|\mathbb{I}_{\text{KM}}(\vvy)\| =    B  \frac{ a^4}{L^4} \left[  \cO \left(
 \frac{c}{  B   \left|\eta-\eta_{*} \right|}
 \right)+o(1)\right].
\end{equation}
At the dipole location we have
\begin{eqnarray}\label{eq.asymprangepass2}
\mathbb{I}_{\text{KM}}(\vvy_*)=   B\left[ \frac{  a^4}{(16 \pi)^2 L^4} \begin{pmatrix}  \bGa_{*;1:2,1:2}& \boldsymbol {0} \\  \boldsymbol {0} & 0 \end{pmatrix} +o\Big(\frac{ a^4}{L^4}\Big) \right].
\end{eqnarray}
\end{proposition}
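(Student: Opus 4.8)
The plan is to mirror the proof of Proposition~\ref{eq.proprang}, adapting it to the bilinear structure $\mH\bGa_*\mH^{\top}$ of the active image. Fix an imaging point $\vvy=(\vy_*,L+\eta)$ sharing the cross-range of the scatterer $\vvy_*=(\vy_*,L+\eta_*)$. From \eqref{eq.Kirchpass} with $N=1$ we have $\bbI(\vvy;k)=\mH(\vvy,\vvy_*;k)\,\bGa_*\,\mH(\vvy,\vvy_*;k)^{\top}$. I would insert the Fraunhofer expansion \eqref{eq.asympA}, $\mH(\vvy,\vvy_*;k)=\widetilde{\mH}(\vvy,\vvy_*;k)+\cO(a^4\Theta_a/L^4)+\cO(a^2\Theta_b/L^2)$, into both factors and expand the product. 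Using the uniform bound $\|\widetilde{\mH}(\vvy,\vvy_*;\omega/c)\|\le\mes(\cA)/(4\pi L)^2=\cO(a^2/L^2)$ (each orthogonal projector has induced $2$-norm at most $1$) together with $\|\bGa_*\|=\cO(1)$, every cross term is $\cO\big((a^2/L^2)(a^4\Theta_a/L^4+a^2\Theta_b/L^2)\big)$ and the term bilinear in the corrections is smaller still; since $a^2\Theta_a/L^2=o(1)$ and $\Theta_b=o(1)$ in the Fraunhofer regime, all of these are $o(a^4/L^4)$, uniformly in $\omega$. Hence
\[
\bbI(\vvy;k)=\widetilde{\mH}(\vvy,\vvy_*;k)\,\bGa_*\,\widetilde{\mH}(\vvy,\vvy_*;k)^{\top}+o\!\left(\frac{a^4}{L^4}\right).
\]

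Next I would exploit that $\vvy$ and $\vvy_*$ have the same cross-range, so the oscillatory factor $\exp[\mi k\,\vx_r\cdot(\vy_*-\vy)/L]$ in \eqref{eq.matIcross} is identically one; thus $\widetilde{\mH}(\vvy,\vvy_*;k)=\me^{\mi k(\eta_*-\eta)}\,\mQ/(4\pi L)^2$ with the $k$-independent geometric matrix $\mQ\equiv\int_{\cA}\md\vx_r\,\mP(\vvx_r,\vvy)\,\mP(\vvx_r,\vvy_*)$. Consequently $\widetilde{\mH}\,\bGa_*\,\widetilde{\mH}^{\top}=\me^{2\mi k(\eta_*-\eta)}\,\mQ\bGa_*\mQ^{\top}/(4\pi L)^4$, and integrating over the band $[\omega_0-B/2,\omega_0+B/2]$ the geometric factor $\mQ\bGa_*\mQ^{\top}$ pulls out, leaving the scalar integral
\[
\int_{|\omega-\omega_0|<B/2}\md\omega\,\me^{2\mi\omega(\eta_*-\eta)/c}=B\,\me^{2\mi\omega_0(\eta_*-\eta)/c}\,\operatorname{sinc}\!\left(\frac{B(\eta_*-\eta)}{c}\right),
\]
with $\operatorname{sinc}(x)\equiv\sin(x)/x$. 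The argument is $B(\eta_*-\eta)/c$, not the $B(\eta_*-\eta)/(2c)$ of the passive case \eqref{eq.sinscardinal}, because each of the two factors of $\widetilde{\mH}$ carries a copy of the range phase; the range resolution scale $c/B$ is unchanged, as stated.

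The two estimates then follow. For \eqref{eq.asymprangepass1}, when $\eta\ne\eta_*$ I bound $\|\mQ\|=\|\mQ^{\top}\|\le\mes(\cA)=\pi a^2=\cO(a^2)$, $\|\bGa_*\|=\cO(1)$ and $|\operatorname{sinc}(x)|\le 1/|x|$, which yields $\|\bbI(\vvy)\|= B\,(a^4/L^4)\big(\cO(c/(B|\eta-\eta_*|))+o(1)\big)$. For \eqref{eq.asymprangepass2}, at $\vvy=\vvy_*$ the sinc equals $1$ so the frequency integral is just $B$, and $\mQ=\int_{\cA}\md\vx_r\,\mP(\vvx_r,\vvy_*)^2=\int_{\cA}\md\vx_r\,\mP(\vvx_r,\vvy_*)$ by idempotence of $\mP$; Lemma~\ref{lem.crossrange1} then gives $\mQ=(4\pi L)^2\widetilde{\mH}(\vvy_*,\vvy_*;k)=\pi a^2\diag(1,1,0)+o(a^2)$. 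Substituting, using $\diag(1,1,0)\,\bGa_*\,\diag(1,1,0)=\left(\begin{smallmatrix}\bGa_{*;1:2,1:2}&\boldsymbol 0\\ \boldsymbol 0&0\end{smallmatrix}\right)$ and $(\pi a^2)^2/(4\pi L)^4=a^4/((16\pi)^2L^4)$, produces exactly \eqref{eq.asymprangepass2}.

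Since the passive analogue (Proposition~\ref{eq.proprang}) is already in hand, the computation is largely mechanical; the only points requiring real care are the bookkeeping of the error terms through the bilinear expansion of $\mH\bGa_*\mH^{\top}$ — checking that the cross terms and the doubly small term are genuinely $o(a^4/L^4)$ \emph{uniformly} over the frequency band — the factor of two in the sinc argument, and correctly carrying the geometric constant through to $(16\pi)^{-2}$.
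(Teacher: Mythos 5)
Your proposal is correct and follows essentially the same route as the paper: insert the Fraunhofer expansion of $\mH$ into $\mH\,\bGa_*\,\mH^{\top}$, pull out the doubled range phase so the frequency integral produces $B\,\me^{2\mi\omega_0(\eta_*-\eta)/c}\operatorname{sinc}\bigl(B(\eta_*-\eta)/c\bigr)$, and control the remaining geometric factor by boundedness of the projectors (off the dipole) and by Lemma~\ref{lem.crossrange1} (at the dipole). The only cosmetic difference is that the paper obtains \eqref{eq.asymprangepass2} by integrating the single-frequency asymptotic \eqref{eq.asymppas2} over the band, whereas you recompute it directly from Lemma~\ref{lem.crossrange1}, which amounts to the same calculation.
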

\begin{proof}
We first consider the case where the imaging point
$\vvy=(\vvy_*,L+\eta)$ does not coincide with the dipole, i.e. $\eta
\neq \eta_*$. Using 
the asymptotic \eqref{eq.asymtAg} and the definition of $\widetilde{\mH}$ in \eqref{eq.matIcross}, we observe that
the imaging function \eqref{eq.Kirchpass} at $\vvy$
is
\begin{equation}\label{eq.rangepassifform}
\mathbb{I}_{\text{KM}}(\vvy)=   \int_{|\omega-\omega_0|<B/2}\md \omega
\,     \exp[ 2 \mi \omega(\eta_*-\eta)/c] \,  \mQ(\vvy, \vvy_*) \,
\bGa_{*} \, \mQ(\vvy, \vvy_*)^{\top}  + B o\Big(\frac{a^4}{L^4}\Big),
\end{equation}
where we used the notation
$$
 \mQ(\vvy, \vvy_*)= \frac{1}{4\pi L^2}\int_{\cA} \md\vx_r\, \mP(\vvx_r,\vvy)  \mP(\vvx_r,\vvy_*)=\cO \Big(\frac{a^2}{L^2}\Big).
$$
Integrating in frequency we get
\begin{equation}\label{eq.sincassymp}
 \int_{|\omega-\omega_0|<B/2}\md \omega \,     \exp[ 2 \mi
 \omega(\eta_*-\eta)/c] = B \exp[2 \mi \omega_0 (\eta_*-\eta)/c ]
 \operatorname{sinc}\left(\frac{B (\eta_*-\eta)}{c}\right).
\end{equation}
Finally combining this last relation with \eqref{eq.rangepassifform}
yields
\begin{equation}\label{eq.decreaseimageactifrange}
\mathbb{I}_{\text{KM}}(\vvy_*)=B \exp[2 \mi \omega_0 (\eta_*-\eta)/c ]
\operatorname{sinc}\left(\frac{B (\eta_*-\eta)}{  c}\right) \cO
\Big(\frac{a^4}{L^4}\Big)+ B o\Big(\frac{a^4}{L^4}\Big).
\end{equation}
The asymptotic formula \eqref{eq.asymprangepass1} follows.

When $\vvy=\vvy_*$, the asymptotic formula \eqref{eq.asymprangepass2}
follows immediately by integrating over the frequency the asymptotic
\eqref{eq.asymppas2}.
\end{proof}

\subsection{Polarizability tensor recovery in the range direction}
\label{sec:range:active}
From the asymptotic analysis of section~\ref{sec:cr:pola}, at a
frequency $\omega$ we can only expect to recover the cross-range
polarizability tensor $\bGa_{i;1:2,1:2}$ by solving the linear system
\eqref{eq.systalpha}. The solution $\bGa_{1:2,1:2}$ depends on both the
imaging point and the frequency $\omega$. Since we assume that the true
polarizability tensor does not depend on $\omega$ we propose to estimate
it by averaging the single frequency estimate over the frequency band,
i.e.
\begin{equation}\label{eq.solalphamultifreq}
\bGa_{1:2,1:2}=\frac{1}{ B}\int_{|\omega-\omega_0|<  B/2}\md \, \omega\, \bGa_{1:2,1:2} (\omega).
\end{equation}

As we see next in section~\ref{sec:polarange}, if the depth of the
scatterer is well-known, this procedure estimates well the
cross-range polarizability of the scatterer. However, as in the passive
case (see section~\ref{sec:rangepol:passive}), the image oscillates in
depth. These oscillations can be characterized and suppressed as we show
in section~\ref{sec:suppression:active}.

\subsubsection{Analysis of polarizability tensor image in range}
\label{sec:polarange}

In the next proposition, we isolate the effect of depth by considering
scatterers that are aligned in range. We show that the depth resolution
of the cross-range polarizability tensor image is $c/B$, i.e. identical
to the passive case (section~\ref{sec:range:passive}). If the imaging
point coincides with the scatterer position, the error in estimating the
cross-range polarizability tensor remains small, provided the scatterers
are well separated, i.e.  $|\eta_i-\eta_j|$ is large with respect to the
range resolution $c/B$. 

\begin{proposition}\label{prop.polarange.mat}
When the array $\cA$ is a disk of radius $a$ and the dipoles are all
aligned the range direction of $\cA$, the image of the cross-range
polarizability tensor $\bGa_{1:2,1:2}$ (given by
\eqref{eq.solalphamultifreq}) satisfies the two following estimates:
\begin{itemize}
\item If the imaging point is the dipole location, i.e. $\vvy=\vvy_i$,
we have
\begin{equation}\label{eq.rangepolaretensorcover}
\|\bGa_{1:2,1:2}-\bGa_{i;1:2,1:2}\| = \cO \Big(  \frac{c} {B \min \limits_{i\neq j}|\eta_i-\eta_j|}\Big)+o(1),
\end{equation}
\item If the imaging point range is different from any of the dipole
ranges, $\vvy=(\vy_*,L+\eta)\neq \vvy_j$ (for all $j=1,\ldots,N$),
\begin{equation}\label{eq.drecraspolatensprrange}
\|\bGa_{1:2,1:2}\| =   \cO \Big(  \frac{c} {B \min \limits_{j=1,\ldots,N}|\eta-\eta_j|} \Big)+o(1).
\end{equation}
\end{itemize}
\end{proposition}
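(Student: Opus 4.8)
The plan is to follow the template of the proof of Proposition~\ref{prop.polarange}, replacing the scalar point--spread factor of the passive problem by the matrix conjugation $\mH_{1:2,1:2}(\cdot)\mH_{1:2,1:2}$, and to import the range--decay mechanism of Proposition~\ref{eq.proprangmat}. Throughout I take imaging points $\vvy=(\vy_*,L+\eta)$ in the imaging window and use that the Fraunhofer scalings hold uniformly over the band $[\omega_0-B/2,\omega_0+B/2]$. First, since $\cA$ is a disk of radius $a$, Lemma~\ref{lem.crossrange1} and the asymptotic \eqref{eq.asymptA} give, uniformly in $\omega$ on the band, $\mH_{1:2,1:2}(\vvy,\vvy;\omega/c)=\frac{a^2}{16\pi L^2}\mI+o(a^2/L^2)$, hence $\mH_{1:2,1:2}(\vvy,\vvy;\omega/c)^{-1}=\frac{16\pi L^2}{a^2}\mI+\mathbf{E}(\omega)$ with $\|\mathbf{E}(\omega)\|=o(L^2/a^2)$ (this is the invertibility step of Proposition~\ref{prop.polacrossactiv}, taken uniformly in $\omega$). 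In particular \eqref{eq.systpblocalpha} has, at each $\omega$, the unique solution $\bGa_{1:2,1:2}(\omega)=\mH_{1:2,1:2}^{-1}\mathbb{I}_{\text{KM};1:2,1:2}(\vvy;\omega/c)\mH_{1:2,1:2}^{-1}$, so \eqref{eq.solalphamultifreq} is well defined; and since $\|\mathbb{I}_{\text{KM};1:2,1:2}(\vvy;\omega/c)\|=\cO(a^4/L^4)$ at each frequency (from $\|\mH(\vvy,\vvy_j;\omega/c)\|=\cO(a^2/L^2)$ in \eqref{eq.Kirchpass}), replacing each $\mH_{1:2,1:2}^{-1}$ by its leading part $\frac{16\pi L^2}{a^2}\mI$ inside the average \eqref{eq.solalphamultifreq} introduces only an $o(1)$ error.

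For $\vvy=\vvy_i$ I would use the exact identity \eqref{eq.relationalpha12}, valid at each $\omega$ with Fraunhofer remainder $R(\omega)=\cO(a^4b/L^5)+\cO(a^6\Theta_a/L^6)+\cO(a^4\Theta_b/L^4)=o(a^4/L^4)$ (because $b\ll L$, $a^2\Theta_a/L^2\ll1$, $\Theta_b\ll1$), to write $\bGa_{1:2,1:2}(\omega)-\bGa_{i;1:2,1:2}=\mH_{1:2,1:2}^{-1}(C_i(\omega)-R(\omega))\mH_{1:2,1:2}^{-1}$, where $C_i(\omega)=\big[\sum_{j\neq i}\mH(\vvy_i,\vvy_j;\omega/c)\bGa_j\mH(\vvy_i,\vvy_j;\omega/c)^{\top}\big]_{1:2,1:2}$ is the coupling block. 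Averaging over the band and factoring out $\mH_{1:2,1:2}^{-1}$ as above,
\[
\bGa_{1:2,1:2}-\bGa_{i;1:2,1:2}=\Big(\frac{16\pi L^2}{a^2}\Big)^2\frac1B\int_{|\omega-\omega_0|<B/2}\md\omega\,\big(C_i(\omega)-R(\omega)\big)+o(1).
\]
The $R$--integral contributes $B\,o(a^4/L^4)$. For the coupling term, the scatterers being range--aligned means $\vvy_i$ and $\vvy_j$ share the cross-range $\vy_*$, so Lemma~\ref{lem.crossrange2} gives nothing at fixed $\omega$; instead, since $\widetilde{\mH}(\vvy_i,\vvy_j;\omega/c)=\exp[\mi\omega(\eta_j-\eta_i)/c]\,M_{ij}$ with $M_{ij}=\cO(a^2/L^2)$ independent of $\omega$, the computation \eqref{eq.rangepassifform}--\eqref{eq.sincassymp} of Proposition~\ref{eq.proprangmat} applied to the pair $(\vvy_i,\vvy_j)$ gives $\int_{|\omega-\omega_0|<B/2}\md\omega\,\mH(\vvy_i,\vvy_j;\omega/c)\bGa_j\mH(\vvy_i,\vvy_j;\omega/c)^{\top}=B\frac{a^4}{L^4}(\cO(c/(B|\eta_i-\eta_j|))+o(1))$. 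Summing over the finitely many $j\neq i$ (with $\|\bGa_j\|=\cO(1)$) yields \eqref{eq.rangepolaretensorcover}.

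For $\vvy=(\vy_*,L+\eta)\neq\vvy_j$, the same factoring together with $\int_{|\omega-\omega_0|<B/2}\md\omega\,\mathbb{I}_{\text{KM};1:2,1:2}(\vvy;\omega/c)=\mathbb{I}_{\text{KM};1:2,1:2}(\vvy)$ gives $\bGa_{1:2,1:2}=\frac1B(16\pi L^2/a^2)^2\,\mathbb{I}_{\text{KM};1:2,1:2}(\vvy)+o(1)$, and the range--decay estimate \eqref{eq.asymprangepass1} of Proposition~\ref{eq.proprangmat} (extended from $N=1$ to $N$ scatterers by linearity of $\mathbb{I}_{\text{KM}}$, which replaces $|\eta-\eta_*|$ by $\min_j|\eta-\eta_j|$) immediately gives \eqref{eq.drecraspolatensprrange}.

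I expect the delicate point to be the coupling term in the case $\vvy=\vvy_i$: for range--aligned scatterers the single--frequency conjugated matrices $\mH(\vvy_i,\vvy_j;\omega/c)\bGa_j\mH(\vvy_i,\vvy_j;\omega/c)^{\top}$ do not decay at all ($\cO(a^4/L^4)$ each, with no help from Lemma~\ref{lem.crossrange2} since $\vy_i=\vy_j$), so the argument must be arranged so that the frequency average acts on the coupling \emph{sum} — with the $\omega$--independent leading parts of $\mH_{1:2,1:2}^{-1}$ pulled outside first — and the decay then emerges only from the oscillatory factor $\exp[2\mi\omega(\eta_i-\eta_j)/c]$ and the resulting $\operatorname{sinc}$, exactly as in Proposition~\ref{eq.proprangmat}. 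Apart from that, the argument is a routine matrix recomputation of the passive case, Proposition~\ref{prop.polarange}.
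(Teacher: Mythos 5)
Your proposal is correct and follows essentially the same route as the paper's proof: use the identity \eqref{eq.relationalpha12} at each frequency, pull out the $\omega$-independent leading part of $\mH_{1:2,1:2}(\vvy,\vvy;\omega/c)^{-1}$ (of size $L^2/a^2$), and let the frequency average of the range-aligned coupling terms produce the $\operatorname{sinc}$ decay via the computation \eqref{eq.rangepassifform}--\eqref{eq.sincassymp}/\eqref{eq.decreaseimageactifrange} of Proposition~\ref{eq.proprangmat}, with the second case handled by the same factoring and the decay \eqref{eq.asymprangepass1}. You also correctly identified the delicate point (Lemma~\ref{lem.crossrange2} gives nothing for range-aligned scatterers, so all decay must come from the band integration), which is exactly how the paper argues.
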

\begin{proof}
We start with the case where $\vvy = \vvy_i$. To shorten notation we use
$\mH_{1:2,1:2}(\omega/c)$ instead of
$\mH_{1:2,1:2}(\vvy_i,\vvy_i;\omega/c)$. From the definition of the
cross-range polarizability tensor image \eqref{eq.solalphamultifreq} and
our assumption that the exact cross-range polarizability is independent
of frequency we
get
\[
\bGa_{1:2,1:2}-\bGa_{i;1:2,1:2}=\frac{1}{B}\int_{|\omega-\omega_0|<
B/2}\md \, \omega (\bGa_{1:2,1:2} (\omega) -  \bGa_{i;1:2,1:2}).
\]
To estimate the difference $\bGa_{1:2,1:2} (\omega) -  \bGa_{i;1:2,1:2}$
we use \eqref{eq.systpblocalpha} and \eqref{eq.relationalpha12} to obtain
\begin{flalign}
&&&\bGa_{1:2,1:2} (\omega) -  \bGa_{i;1:2,1:2} \nonumber\\
&&& =  \mH_{1:2,1:2}\Big(\frac{\omega}{c}\Big)^{-1}\Big( \big[ -\sum_{j\neq i} \mH(\vvy_i,\vvy_j;\frac{\omega}{c}) \, \bGa_j \, \mH(\vvy_i,\vvy_j;\frac{\omega}{c})^{\top} \big]_{1:2,1:2}+o\big(\frac{a^4}{L^4}\big) \Big) \mH_{1:2,1:2}\Big(\frac{\omega}{c}\Big)^{-1} .\nonumber
\end{flalign}
Now recalling that $\mH_{1:2,1:2}(\omega/c)^{-1}=(4 \pi \, L)^2/a^2+o(L^2/a^2)$, one gets that
\begin{flalign*}\label{eq.estimaterangepola}
&&& \bGa_{1:2,1:2}-\bGa_{i;1:2,1:2} \nonumber\\ 
&&&=\frac{(4 \pi L)^4}{a^4 B}\int_{|\omega-\omega_0|<  B/2} \md \omega \big[ -\sum_{j\neq i} \mH(\vvy_i,\vvy_j;\frac{\omega}{c}) \, \bGa_j \, \mH(\vvy_i,\vvy_j;\frac{\omega}{c})^{\top} \big]_{1:2,1:2}  +o(1) .
\end{flalign*}
Then, since the dipoles are aligned, one can use the relation
\eqref{eq.decreaseimageactifrange} to rewrite the coupling term
$\sum_{j\neq i} \mH(\vvy_i,\vvy_j;\omega/c) \, \bGa_j \,
\mH(\vvy_i,\vvy_j;\omega/c)^{\top}$ in terms of a $\operatorname{sinc}$ to get
\begin{equation*}
\bGa_{1:2,1:2}-\bGa_{i;1:2,1:2}=\frac{-(4 \pi L)^4}{a^4} \sum_{j\neq i}
e^{2 \mi \omega_0 (\eta_j-\eta_i)/c } \operatorname{sinc}\left(\frac{B
(\eta_j-\eta_i)}{ c}\right) O\big(\frac{a^4}{L^4}\big)+o(1).
\end{equation*}
This gives the asymptotic \eqref{eq.rangepolaretensorcover}.
The asymptotic \eqref{eq.drecraspolatensprrange} can be proved in a
similar way.
\end{proof}

\subsubsection{Suppression of oscillatory artifact in depth}
\label{sec:suppression:active}
The image of the cross-range polarizability oscillates depth. To see
this, consider the reconstruction formula \eqref{eq.solalphamultifreq}
for a single dipole ($N=1$) located at $\vvy_*$ and with polarizability
tensor $\bGa_*$ for imaging points $\vvy=(\vvy_*,L+\eta_*)$ in the range of the dipole $\vvy_*$. The multi-frequency estimate of the cross-range
polarizability tensor is
\[
\bGa_{1:2,1:2}= \frac{1}{B}\int_{|\omega-\omega_0|< B/2} \md \, \omega \,
\mH_{1:2,1:2}\Big(\vvy, \vvy;\frac{\omega}{c}\Big)^{-1}
\big[\mathbb{I}_{\text{KM}}(\vvy,\frac{\omega}{c})]_{1:2,1:2} \mH_{1:2,1:2}\Big(\vvy, \vvy;\frac{\omega}{c}\Big)^{-1}.
\]
Using the asymptotic $\mH_{1:2,1:2}(\vvy, \vvy;\omega/c)^{-1}=(4 \pi \,
L)^2/a^2+o(L^2/a^2)$ and the asymptotic \eqref{eq.rangepassifform} and \eqref{eq.sincassymp} for  $[\mathbb{I}_{\text{KM}}(\vvy,\frac{\omega}{c})]_{1:2,1:2}$  in the range direction of $\vvy_*$, this becomes:
\begin{equation*}
\bGa_{1:2,1:2}
= \frac{(4 \pi L)^4}{a^4 } e^{2 \mi \omega_0 (\eta_*-\eta)/c }\operatorname{sinc}\left(\frac{B (\eta_*-\eta)}{c}\right) \mQ(\vvy, \vvy_*) \, \bGa_{*} \, \mQ(\vvy, \vvy_*)^{\top} +o(1) \, ,
\end{equation*}
for $\vvy=(\vvy_*,L+\eta_*)$. The presence of the complex exponential and the $\operatorname{sinc}$ causes the image
$\cI(\vy_*,L+\eta)$ to oscillate in $\eta$ (twice faster than in passive imaging see \eqref{eq:sincphase}). Thus, if one does not know exactly the cross-range position $L+\eta_*$ of the dipole, one cannot reconstruct in a stable way $\bGa_{1:2,1:2}$. To deal with this artifact, we fix the phase of one component of
$\bGa_{1:2,1:2}$. In the following, we have arbitrarily chosen to
enforce that the $1,1$ entry be real and positive, i.e.
$\arg(\bGa_{1,1}) =0$. This can be achieved by post-processing the
solution of \eqref{eq.systmultifreq} by the operation
$(\overline{\bGa_{1,1}}/|\bGa_{1,1}|) \, \bGa_{1:2,1:2}$.  If
$|\bGa_{1,1}|$ is small, this operation can be problematic and we can
instead fix the phase of another entry of $\bGa$. One can also
regularize using $(\overline{\bGa_{1,1}}/(|\bGa_{1,1}|+\delta))
\bGa_{1:2,1:2}$, for a small $\delta>0$.

\subsection{Numerical experiments}
\label{sec:numexp:active}

\subsubsection{Experiments without noise}  

Figures \ref{fig.crosrrangeactif}, \ref{fig.rangeactif},
\ref{fig.crossrangeellipse} and \ref{fig.rangeellipse} illustrate the
case of three point-like scatterers in a configuration identical to that in
figures \ref{fig.crosrrange1}, \ref{fig.crosrrange2}  and
\ref{fig.range1}. The dipoles are located at $(-7 \lambda_0,\,
7\lambda_0 ,\, L)$, $(7 \lambda_0,\, 7\lambda_0 ,\, L)$ and
$(2\lambda_0,-2\lambda_0 , \, L+7\lambda_0)$.  Their polarizability
tensors are $3\times 3$ symmetric complex matrices, with entries given in
appendix~\ref{app:matrices}. We use the
reconstruction formula \eqref{eq.solalphamultifreq} to recover the
$2\times 2$ block $\bGa_{i;1:2,1:2}$ of each dipole. The Frobenius norms
of the cross-range polarization tensors are approximately 3.9, 3.5 and
4.5, respectively.

In figures \ref{fig.crosrrangeactif} and \ref{fig.rangeactif}, we
visualize $\|\bGa_{i;1:2,1:2}\|$ in the cross-range (figure
\ref{fig.crosrrangeactif}) and range (figure \ref{fig.rangeactif}) of
the different dipoles. We use ellipses to visualize the $2\times 2$ real
symmetric matrices
$\operatorname{Re}((\overline{\bGa_{1,1}}/|\bGa_{1,1}|) \,
\bGa_{1:2,1:2})$ and
$\operatorname{Im}((\overline{\bGa_{1,1}}/|\bGa_{1,1}|) \,
\bGa_{1:2,1:2})$. The principal axis of the ellipses represent the
orientation and magnitude of the matrices. The ellipses are plotted at
the scatterer cross-range positions (figure \ref{fig.crosrrangeactif})
and range positions (figure \ref{fig.rangeactif}). The white and yellow
ellipses represent respectively the real part and imaginary part of the
exact cross-range polarizability tensors. The dashed black and purple
ellipses are those associated with the reconstructed quantities (real
and imaginary, respectively).

We observe that the Frobenius norm  $\|\bGa_{i;1:2,1:2}\|$ is well
estimated for all dipoles in both range and cross-range. The focal spot
is consistent with the Rayleigh criterion $\lambda_0 L/a=5 \lambda_0$ in
the cross-range (figure \ref{fig.crosrrangeactif}) and with $2\pi
c/B=\lambda_0$ in the range (figure \ref{fig.rangeactif}). Furthermore,
in figure \ref{fig.crosrrangeactif}, the image decays away from the
scatterer positions faster than in passive imaging (see figures
\ref{fig.crosrrange1}, \ref{fig.crosrrange2}  and \ref{fig.range1})
which confirms the asymptotics \eqref{eq.crossrangeplarecover} and
\eqref{eq.drecraspolamat}. Finally the $2\times 2$ matrices
$\operatorname{Re}((\overline{\bGa_{1,1}}/|\bGa_{1,1}|) \,
\bGa_{1:2,1:2})$ and
$\operatorname{Im}((\overline{\bGa_{1,1}}/|\bGa_{1,1}|) \,
\bGa_{1:2,1:2} )$ are well reconstructed at the scatterer positions.
Their corresponding ellipses agree in both range and cross-range.

To show the stability of the reconstructions in the focal spot we
display in figures \ref{fig.crossrangeellipse} and
\ref{fig.rangeellipse} the corresponding ellipses at locations (in range
and cross-range) around the focal spot of one particular dipole. These
images (and figures~\ref{fig.crossrangeellipsebruit},
\ref{fig.rangeellipsebruit} and \ref{fig.cubictarget2}) were obtained
with a modified version of the \verb|plot_tensor_field| function in
\cite{Peyre}. We emphasize that the image in range
(figure~\ref{fig.rangeellipse}) is stable in depth because we used the
correction explained in section~\ref{sec:suppression:active}. In figures
\ref{fig.crossrangeellipse} and \ref{fig.rangeellipse}, we use ellipses
to visualize the matrices
$\operatorname{Re}((\overline{\bGa_{1,1}}/|\bGa_{1,1}|) \,
\bGa_{1:2,1:2})$ (left) and
$\operatorname{Im}((\overline{\bGa_{1,1}}/|\bGa_{1,1}|) \,
\bGa_{1:2,1:2} )$ (right) in the vicinity of the cross-range (figure
\ref{fig.crossrangeellipse}) and range position (figure
\ref{fig.rangeellipse}) of the dipole located at  $(7 \lambda_0,\,
7\lambda_0 ,\, L)$. The color of the ellipses is proportional to the
Frobenius norm of $\|\bGa_{i;1:2,1:2}\|$. Comparing these ellipses to
the exact ones of figures \ref{fig.crosrrangeactif} and
\ref{fig.rangeactif} confirms that the correction suppresses the 
oscillation in depth and gives a stable reconstruction of $
\bGa_{1:2,1:2}$, up to a complex sign.

\begin{figure}[h!]
\begin{center}
\includegraphics[width=6.2cm]{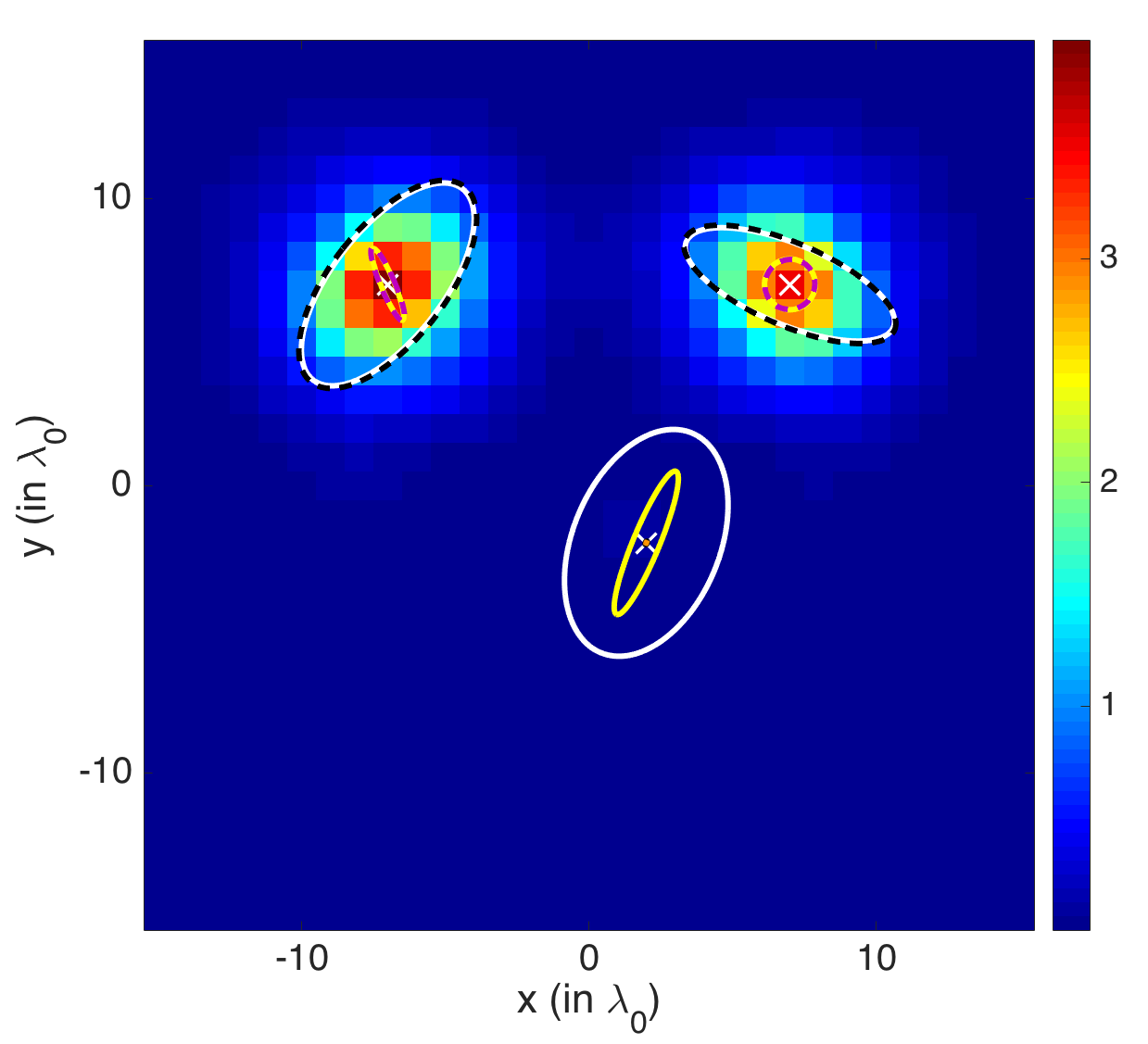}
\hspace{-0.1cm}
\includegraphics[width=6.1cm]{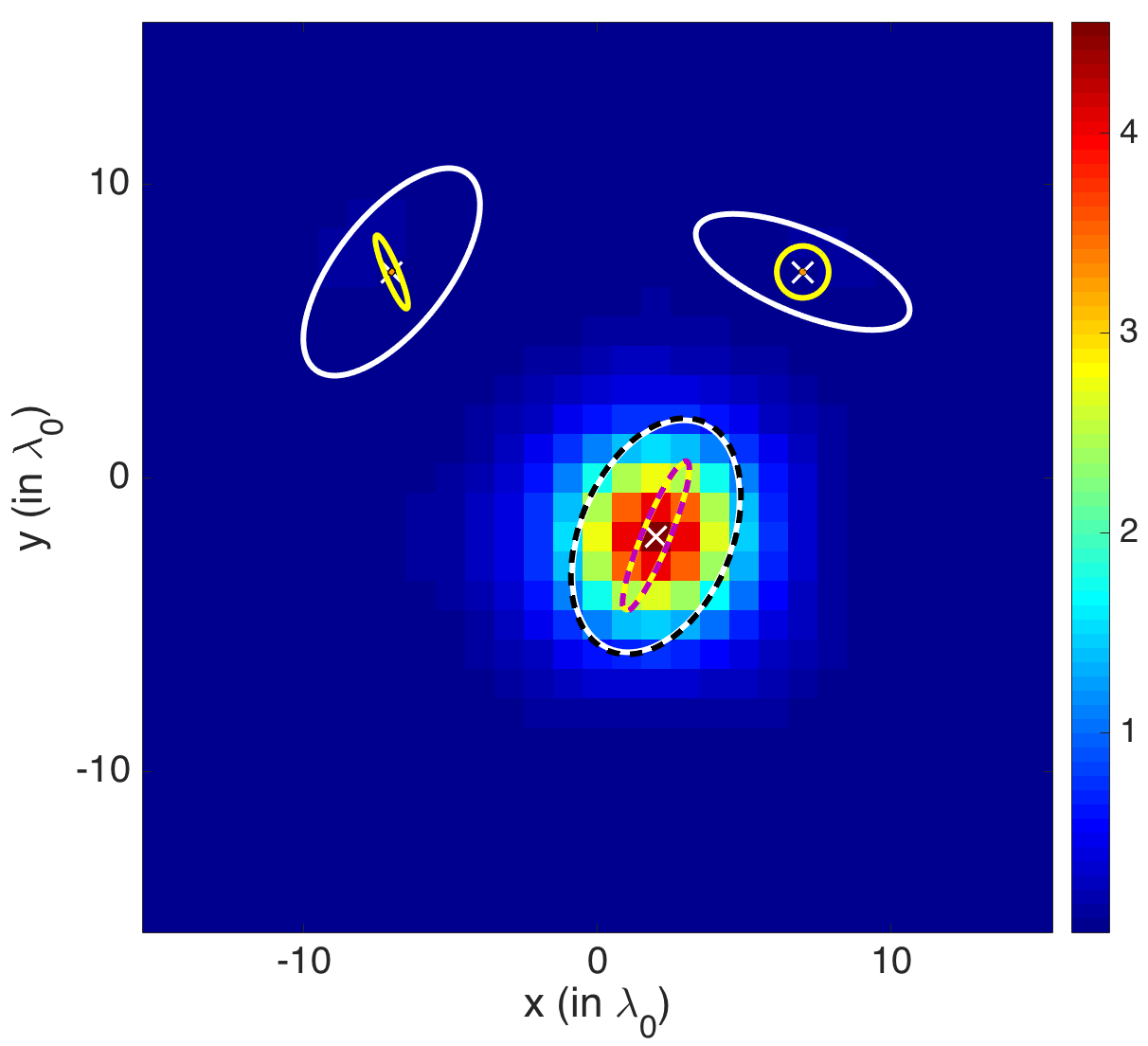}
\end{center}
\caption{Image of $|\bGa_{1:2,1:2}|$ (color scale). Visualization of
$\operatorname{Re}((\overline{\bGa_{1,1}}/|\bGa_{1,1}|) \,
\bGa_{1:2,1:2})$  with ellipses (white for the exact one and black for
the reconstructed one) and of
$\operatorname{Im}((\overline{\bGa_{1,1}}/|\bGa_{1,1}|) \,
\bGa_{1:2,1:2})$ (yellow ellipse for the exact one and purple ellipse
for the reconstructed one) at the scatterer cross-range position in the plane $z=L$ (left) and the plane $z=L+7\lambda_0$ (right).}
\label{fig.crosrrangeactif}
\end{figure}

\begin{figure}[h!]
\begin{center}
\includegraphics[width=6.25cm]{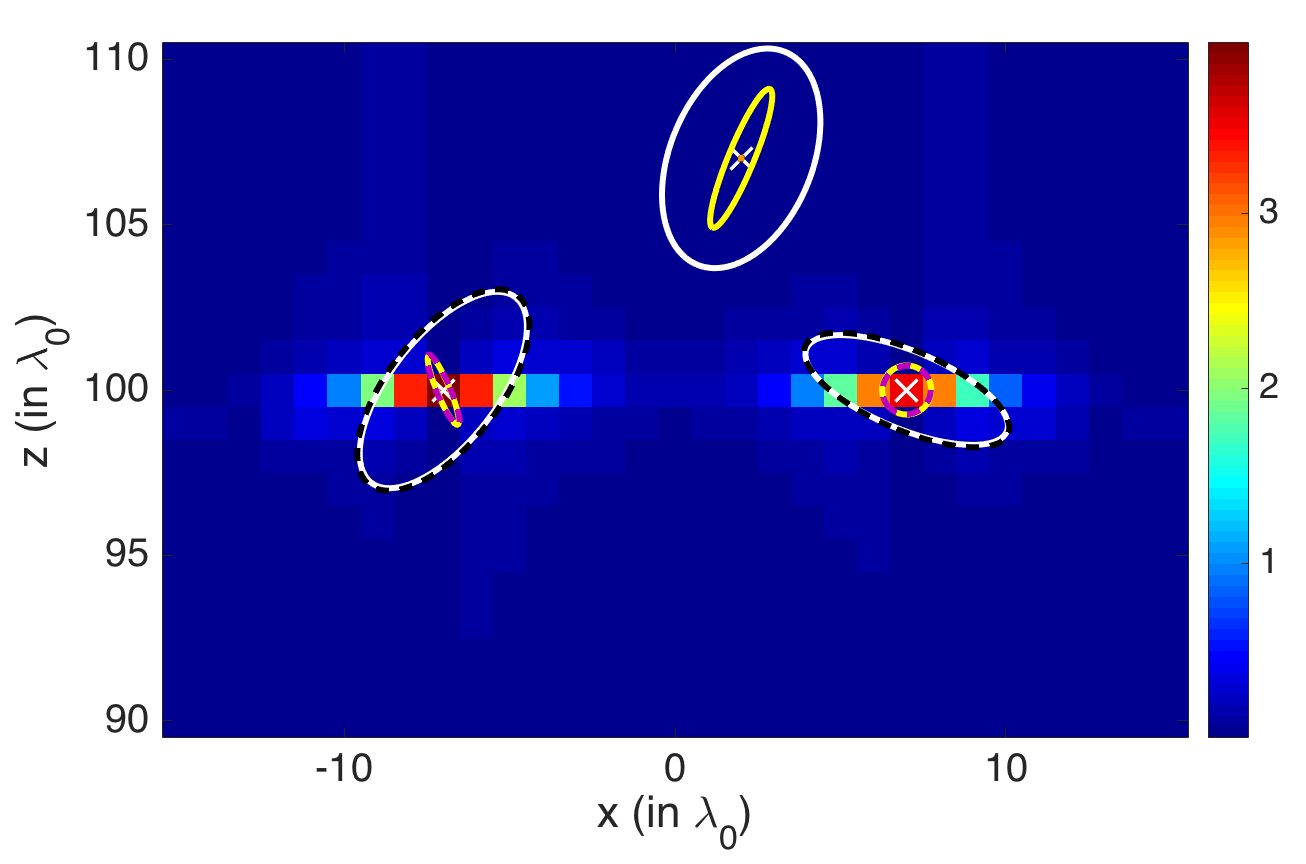}
\hspace{-0.1cm}
\includegraphics[width=6.25cm]{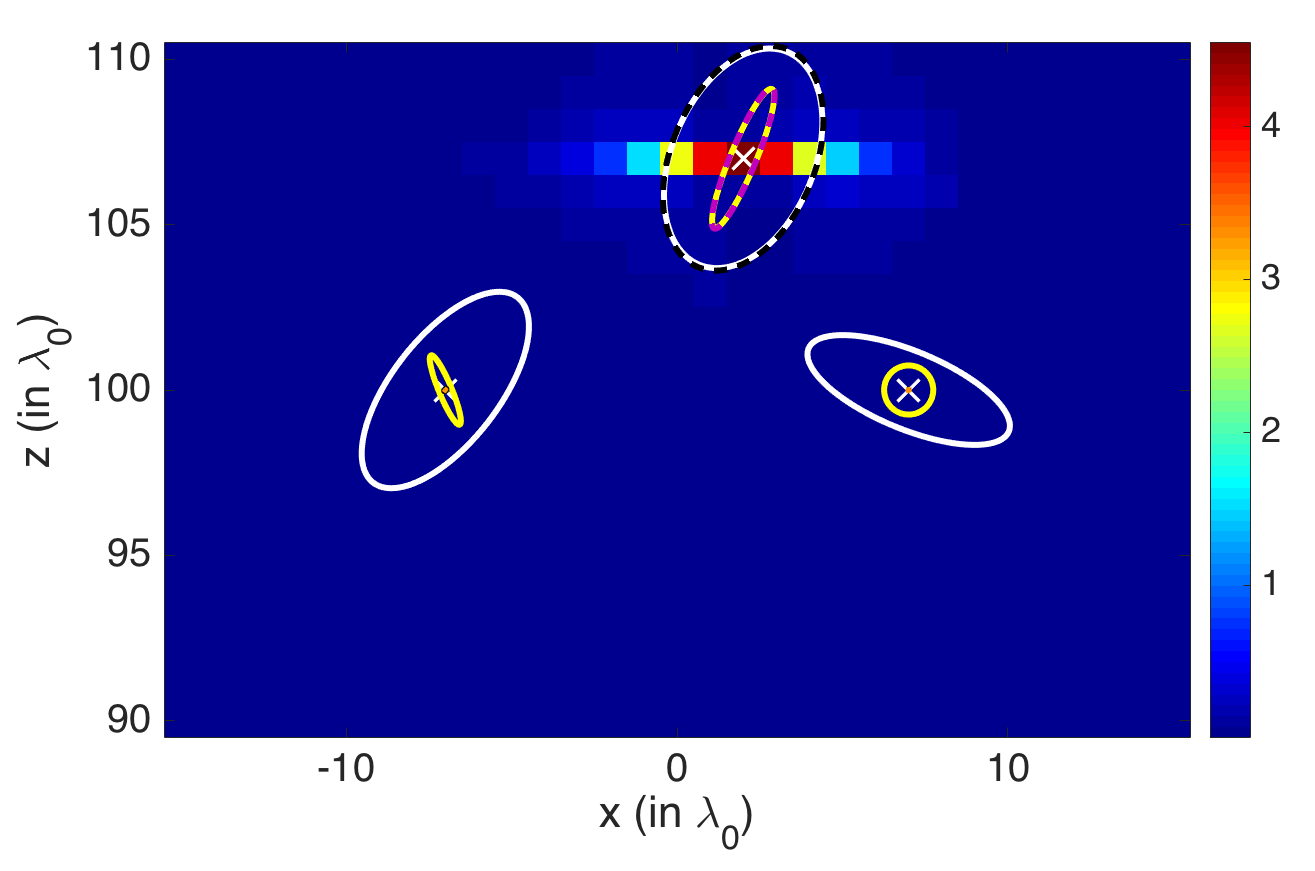}
\end{center}
\caption{Image of $|\bGa_{1:2,1:2}|$ (color scale). Visualization of
$\operatorname{Re}((\overline{\bGa_{1,1}}/|\bGa_{1,1}|) \,
\bGa_{1:2,1:2})$ with ellipses (white for the exact one and black for
the reconstructed one) and of
$\operatorname{Im}((\overline{\bGa_{1,1}}/|\bGa_{1,1}|) \,
\bGa_{1:2,1:2})$ (yellow ellipse for the exact one and purple ellipse
for the reconstructed one) at the scatterer range position in the plane
$y=0$ (left) and the plane $y=-2\lambda_0$ (right).}
\label{fig.rangeactif}
\end{figure}

\begin{figure}[h!]
\begin{center}
\includegraphics[width=6.1cm]{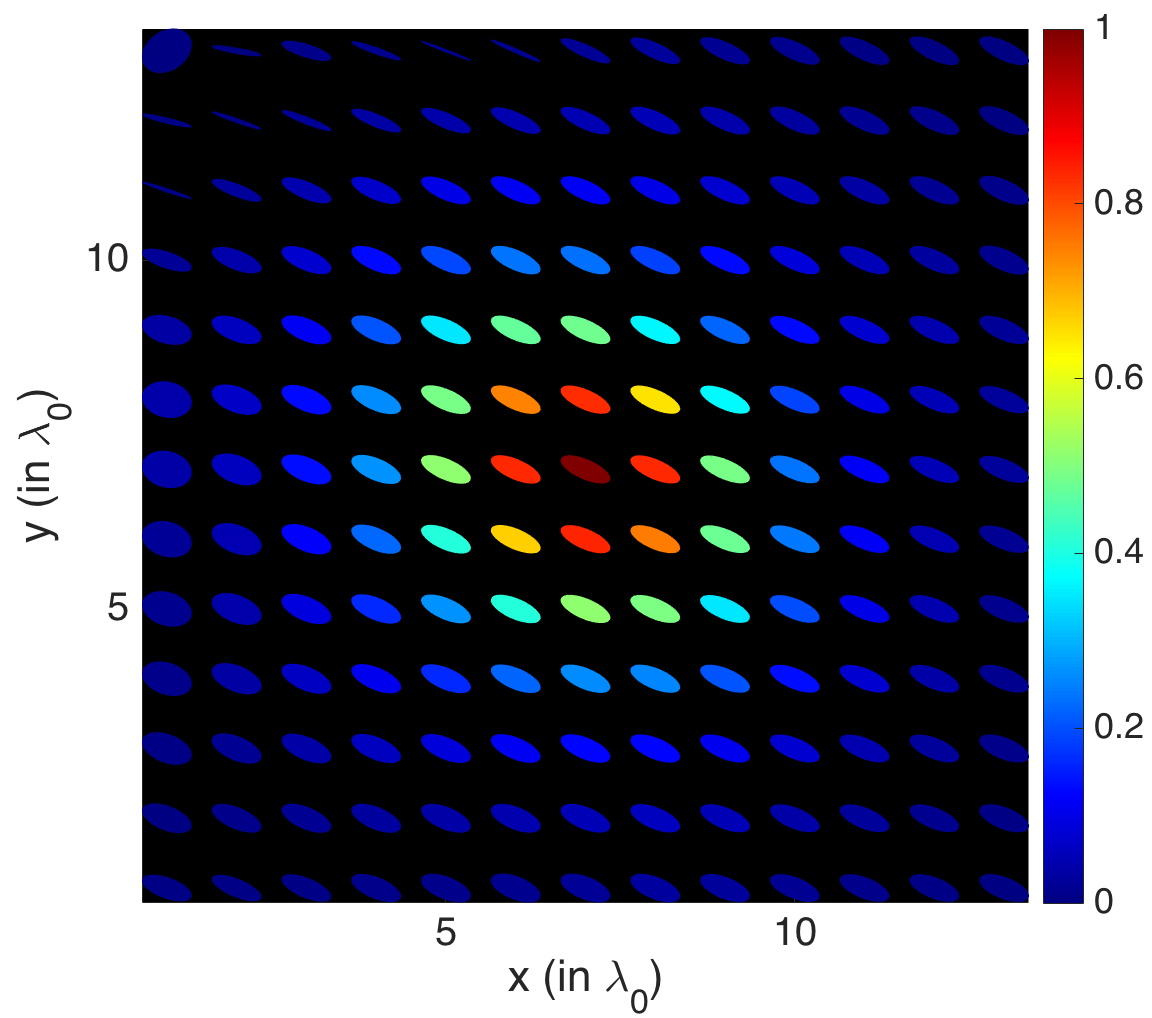}
\hspace{-0.1cm}
\includegraphics[width=6.1cm]{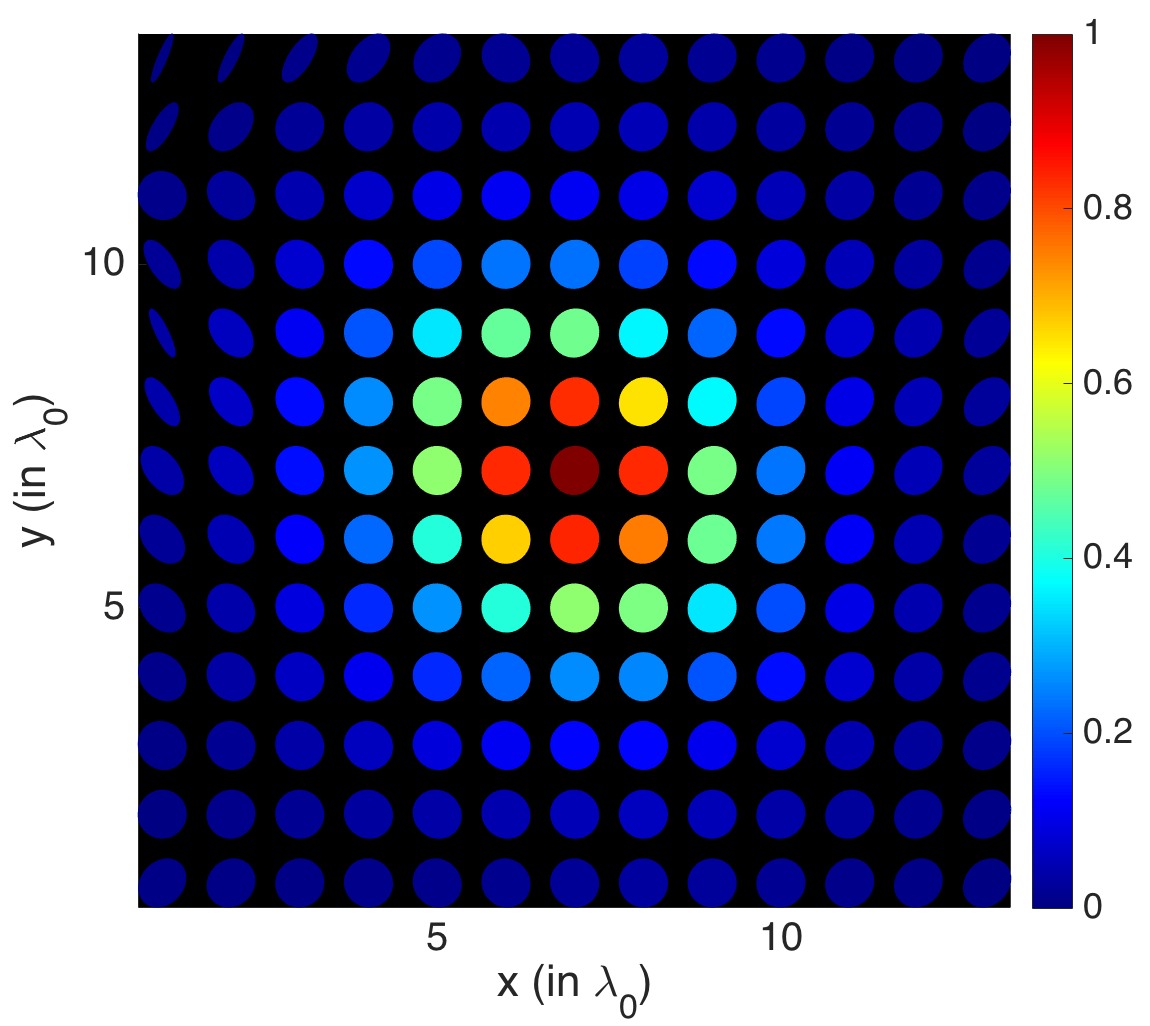}
\end{center}
\caption{Visualization of
$\operatorname{Re}((\overline{\bGa_{1,1}}/|\bGa_{1,1}|) \,
\bGa_{1:2,1:2})$ (left) and
$\operatorname{Im}((\overline{\bGa_{1,1}}/|\bGa_{1,1}|) \,
\bGa_{1:2,1:2})$ (right) in the vicinity of the dipole placed at
$(7\lambda_0,7\lambda_0,L)$ in the plane $z=L$.}
\label{fig.crossrangeellipse}
\end{figure}

\begin{figure}[h!]
\begin{center}
\includegraphics[width=6.1cm]{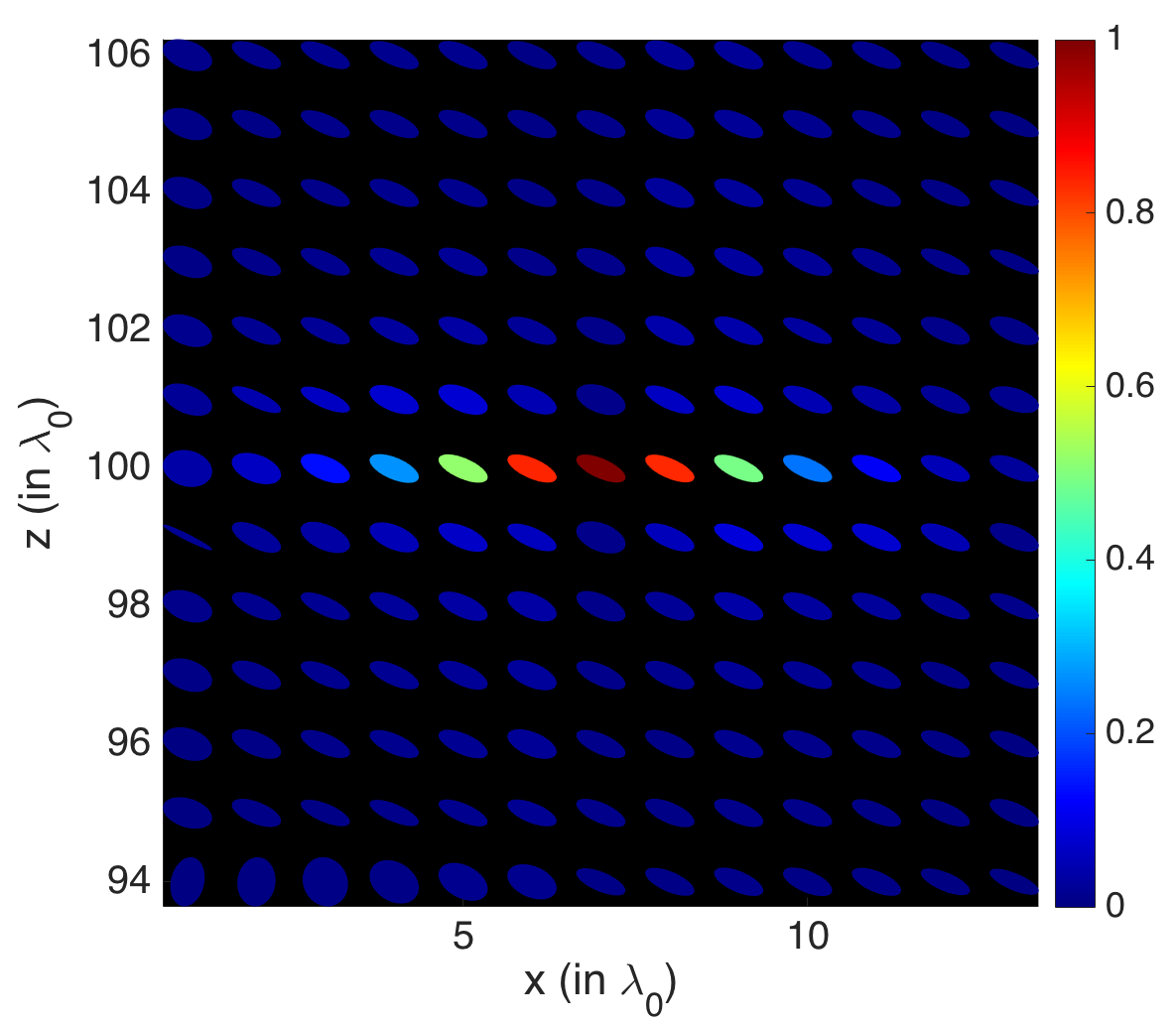}
\hspace{-0.1cm}
\includegraphics[width=6.1cm]{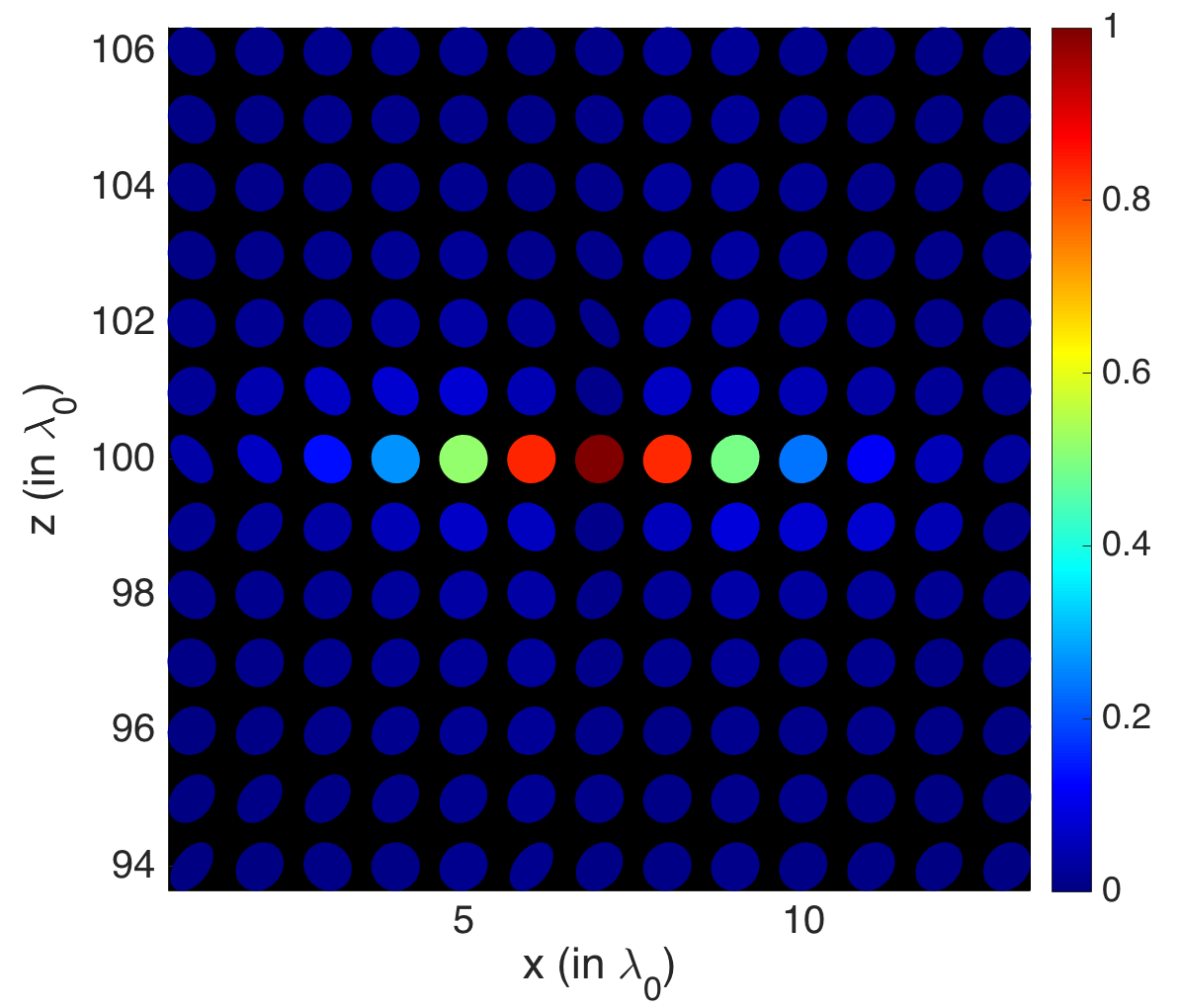}
\end{center}
\caption{Visualization of
$\operatorname{Re}((\overline{\bGa_{1,1}}/|\bGa_{1,1}|) \,
\bGa_{1:2,1:2})$ (left) and
$\operatorname{Im}((\overline{\bGa_{1,1}}/|\bGa_{1,1}|) \,
\bGa_{1:2,1:2})$ (right) in the vicinity of the dipole placed at
$(7\lambda_0,7\lambda_0,L)$ in the plane $y=7\lambda_0$.}
\label{fig.rangeellipse}
\end{figure}

\subsubsection{Experiments with noise} 
For this experiment, the array $\cA$ is a square containing $N\times N$
collocated sources and receivers and that we have $N_{\mathrm{freq}}$
equally spaced frequency samples in the band $\omega_0 + [-B/2,B/2]$.
To simulate noise in the measurements, we add a $3N\times 3N$ matrix
$\mathcal{W}(\omega_n/c)$ with zero mean uncorrelated Gaussian
distributed entries $\mathcal{N}(0,\epsilon p_{\rm{avg}})$ to the data
$\Pi(\omega_n/c)$ defined in \eqref{eq.datapassif}, where
$n=1,\ldots,N_{\text{freq}}$. The matrices for each frequency are
uncorrelated.  As in \cite{Borcea:2008:EII}, we choose to fix the level
of noise $\epsilon$ with respect to the average power $p_{\rm{avg}}$
received by the array of the signal on frequency band, namely:
$$
p_{\rm{avg}}=\frac{1}{(3 N)^2
\,N_{\mathrm{freq}}}\sum_{n=1}^{N_{\mathrm{freq}}}
\left\|\Pi\left(\frac{\omega_n}{c}\right)\right\|^2,
$$
where $\|\cdot\|$ stands for the Frobenius norm. The expected noise power is given by
$$
p_{\mathrm{noise}}=E\left[\sum_{n=1}^{N}
\left\|\mathcal{W}\left(\frac{\omega_n}{c}\right)\right\|^2\right]= (3 N)^2 \,N_{\mathrm{freq}} \,\epsilon \, p_{\rm{avg}}.
$$
The signal-to-noise ratio (SNR) in decibels (dB) is then
$$
\mathrm{SNR}=10 \operatorname{log}_{10}\left(
\frac{p_{\mathrm{noise}}}{p_{\mathrm{avg}}} \right)=10  \operatorname{log}_{10}(\epsilon).
$$

Figures \ref{fig.crossrangeellipsebruit} and
\ref{fig.rangeellipsebruit} reproduce the numerical experiments of 
figures \ref{fig.crossrangeellipse} and \ref{fig.rangeellipse} but with
a SNR of $10$dB (i.e. the noise power is 10 times larger than the signal
power). With noise present, the ellipse orientations agree with the
true ones at the focal spot, but are significantly different away from
the focal spot, where the image magnitude is also small. We conclude
that  Kirchhoff imaging is robust to relatively large measurement noise
(as in acoustics, see e.g. \cite{Borcea:2008:EII}).
\begin{figure}[h!]
\begin{center}
\includegraphics[width=6.05cm]{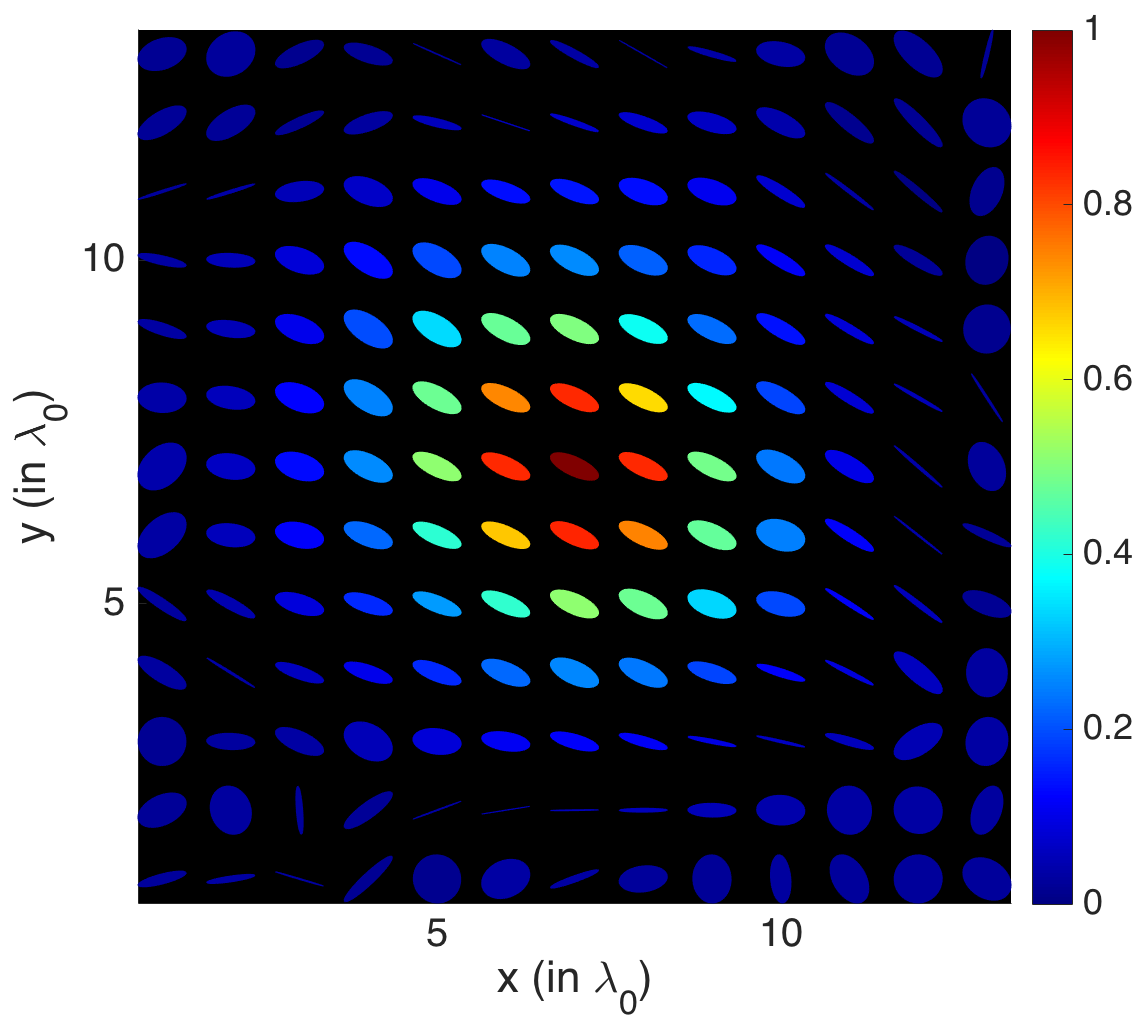}
\hspace{-0.1cm}
\includegraphics[width=6.1cm]{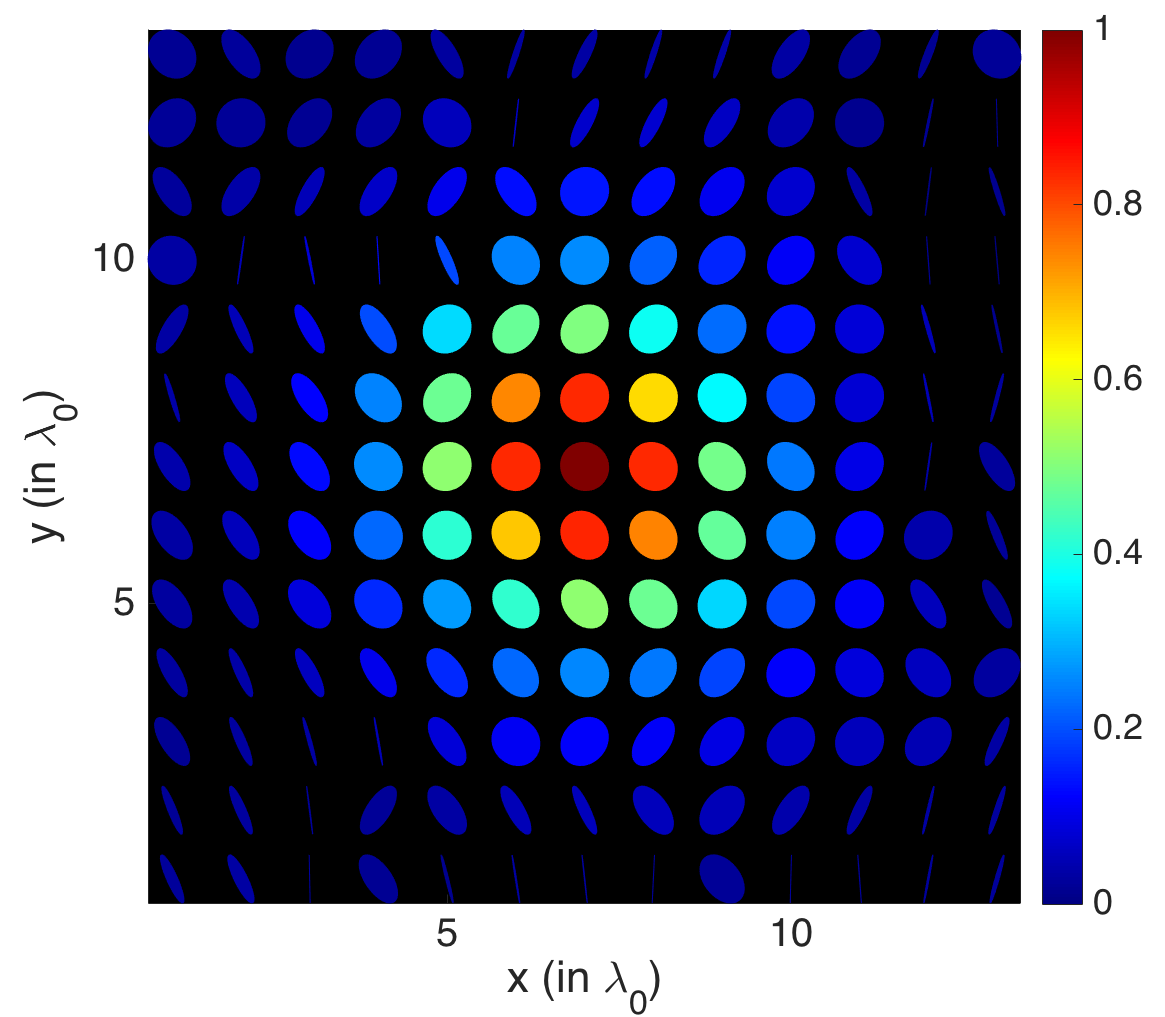}
\end{center}
\caption{Experiment identical to that in figure
\ref{fig.crossrangeellipse}, but with noise in the data and SNR of $10$ dB.}
\label{fig.crossrangeellipsebruit}
\end{figure}

\begin{figure}[h!]
\begin{center}
\includegraphics[width=6.1cm]{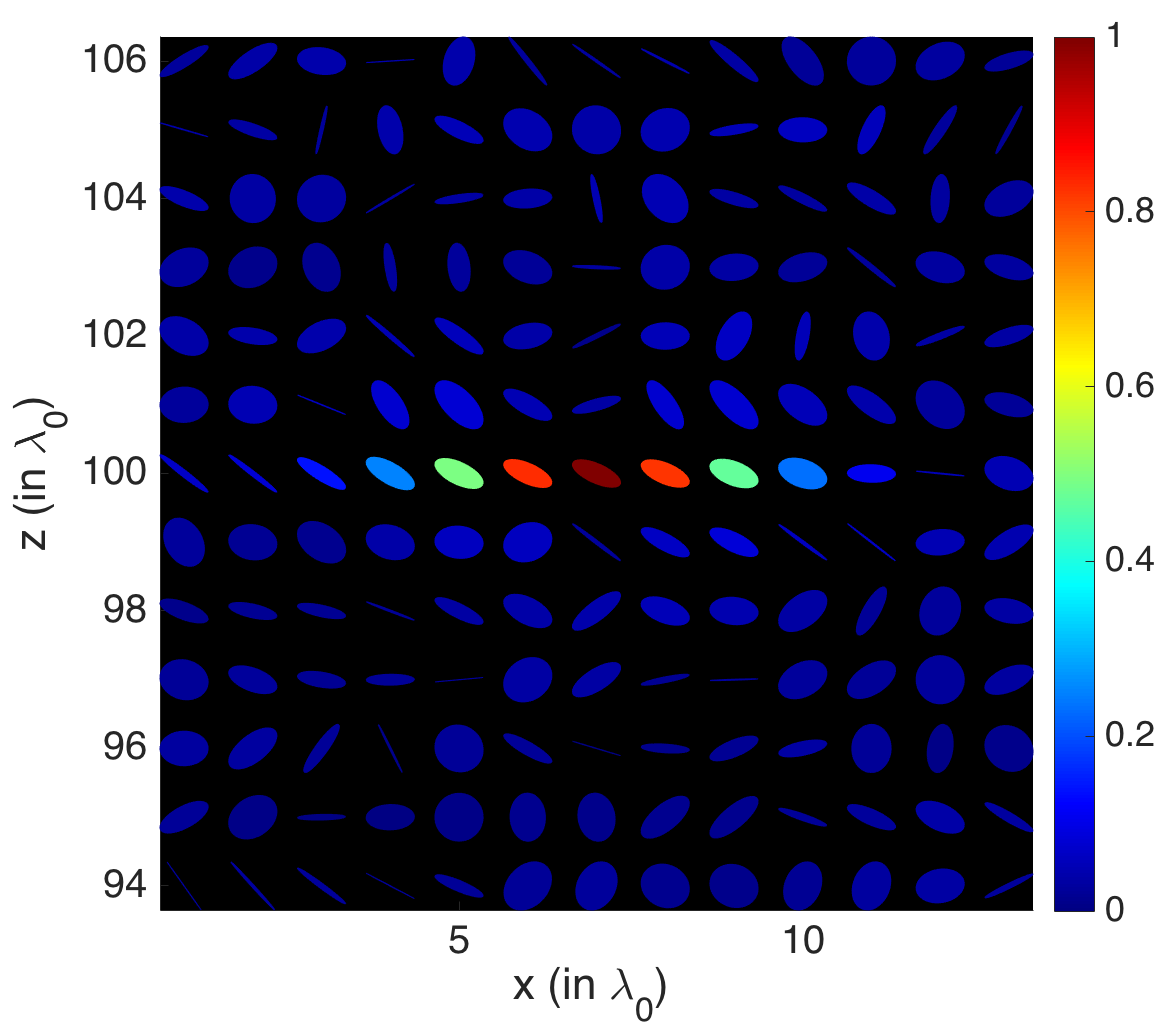}
\hspace{-0.1cm}
\includegraphics[width=6.1cm]{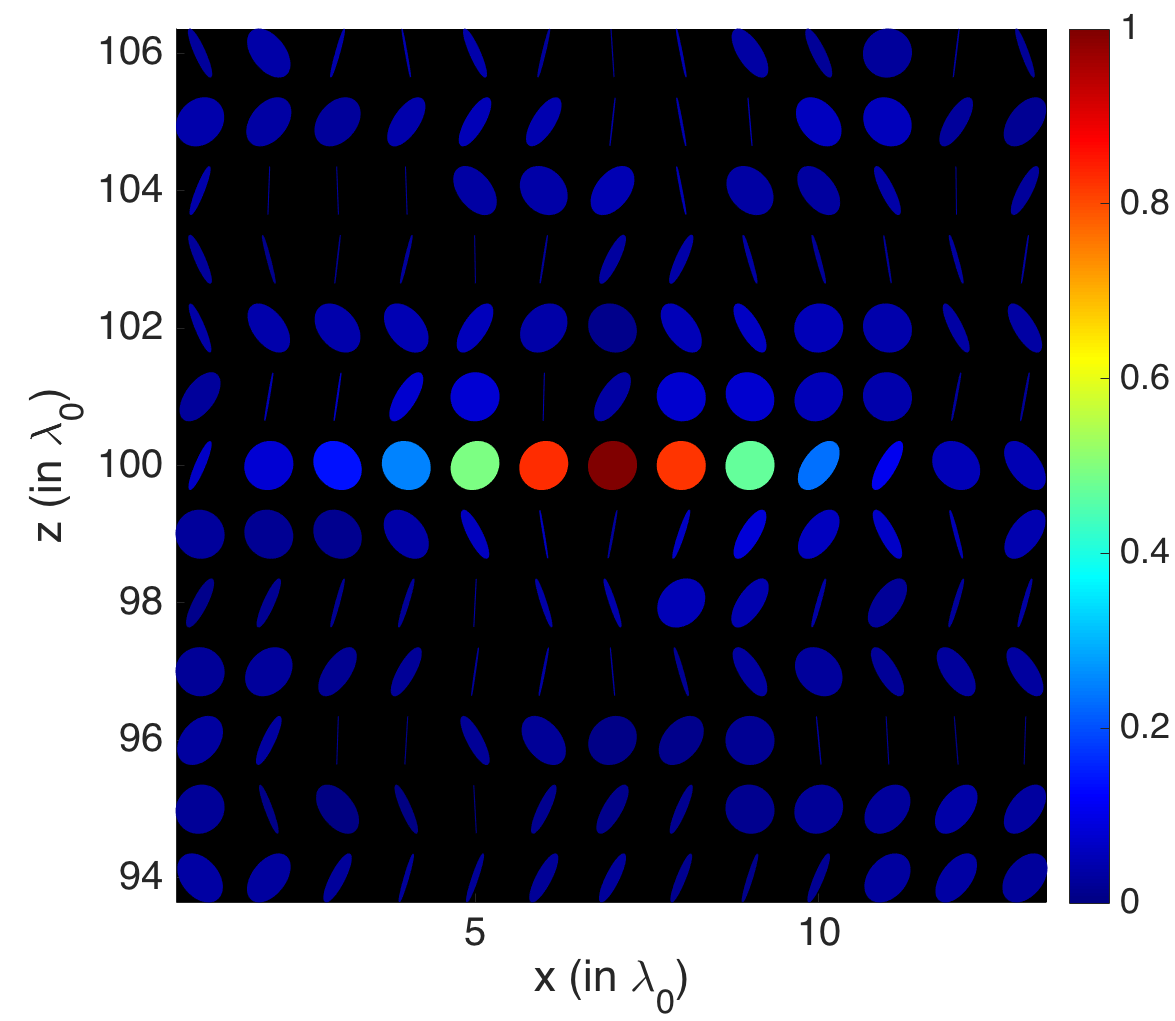}
\end{center}
\caption{Experiment identical to that in figure \ref{fig.rangeellipse},
but with noise and SNR of $10$ dB.}
\label{fig.rangeellipsebruit}
\end{figure}

\subsubsection{Experiments with an extended dipole distribution}
\label{sec:numexp:extended:active}
In the figures \ref{fig.cubictarget1}, \ref{fig.cubictarget2}  and
\ref{fig.cubictarget3} we deal  with the case of a volumetric
polarizability tensor distribution within a cube of side $5\lambda_0$,
with center $(0,0,L)$. This distribution is uniform and generated by a set of dipoles separated by a wavelength of $\lambda_0/4$. All the dipoles are assumed to have the same $3\times 3$ complex symmetric polarizability tensor 
 \[
 \bGa_* = \begin{bmatrix}
    2+1i & 1 & 0\\
    1 & 2+2i & 0\\
    0 & 0 & 1+1i
 \end{bmatrix}.
 \]
The array $\cA$, the central frequency $f_0$ and the frequency band are
identical to the ones of figure \ref{fig.phase}.

In figure \ref{fig.cubictarget1} (left), we observe a good
reconstruction of the location of the scatterer in the cross-range plane
$z=0$. The right figure shows that only the edges of the cube are imaged
in the range direction. Thus the Kirchhoff imaging function detects only
the discontinuities of an extended scattered (as in acoustics
\cite{Blei:2013:MSI}). In contrast to point-like scatterers, the
information about the norm $\|\bGa_{1:2,1:2}\|$ is lost here but one
still observes a very good estimation of the orientations after
normalization and up to a complex sign.  The quality of the
reconstruction can also be seen in figures \ref{fig.cubictarget2}  and
\ref{fig.cubictarget3} which shows the reconstructed tensors are stable
in the vicinity of the scatterer in cross-range (figure
\ref{fig.cubictarget2}) and range (figure \ref{fig.cubictarget3}).

\begin{figure}[h!]
\begin{center}
\includegraphics[width=6.1cm]{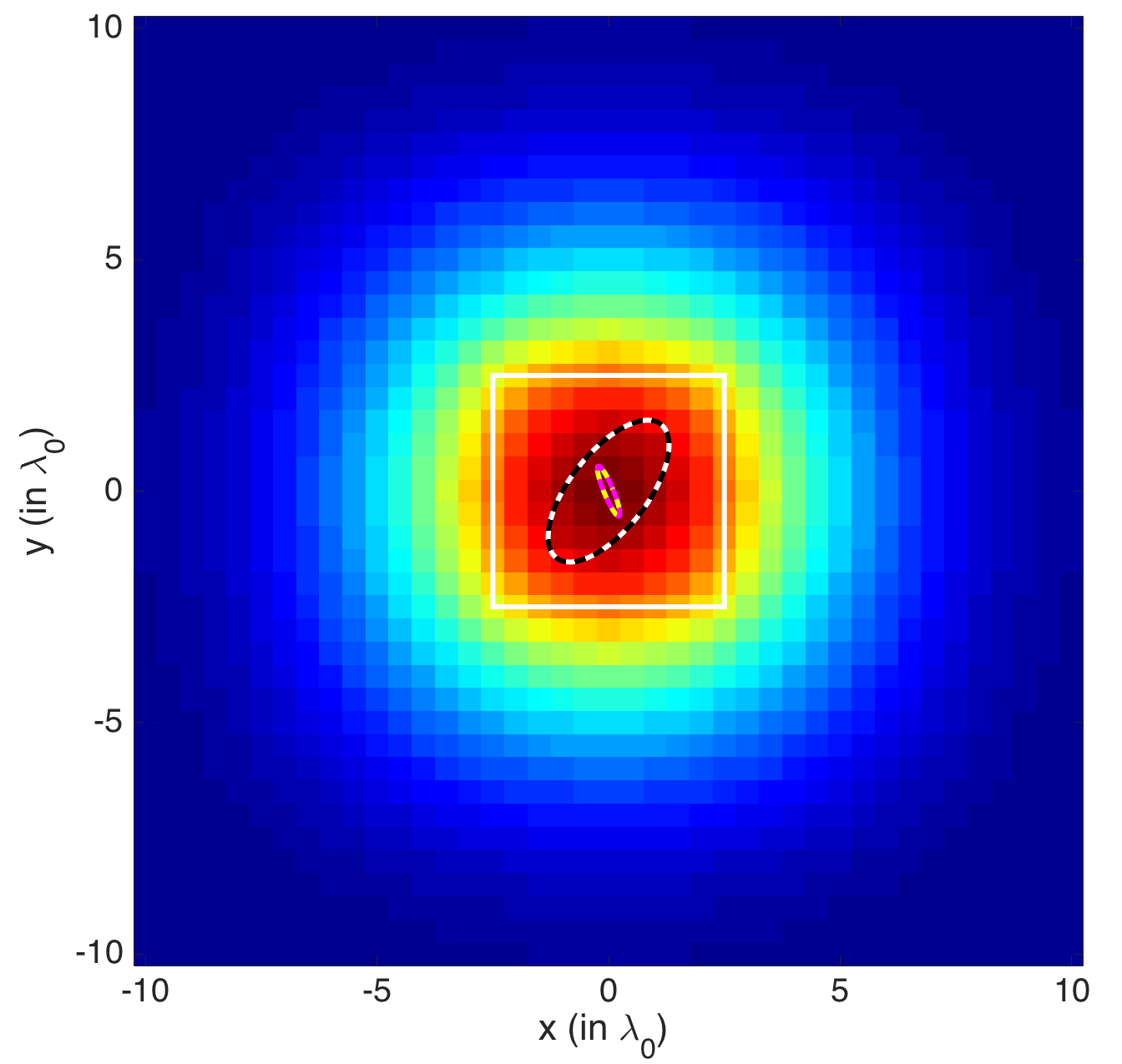}
\hspace{0.1cm}
\includegraphics[width=6.1cm]{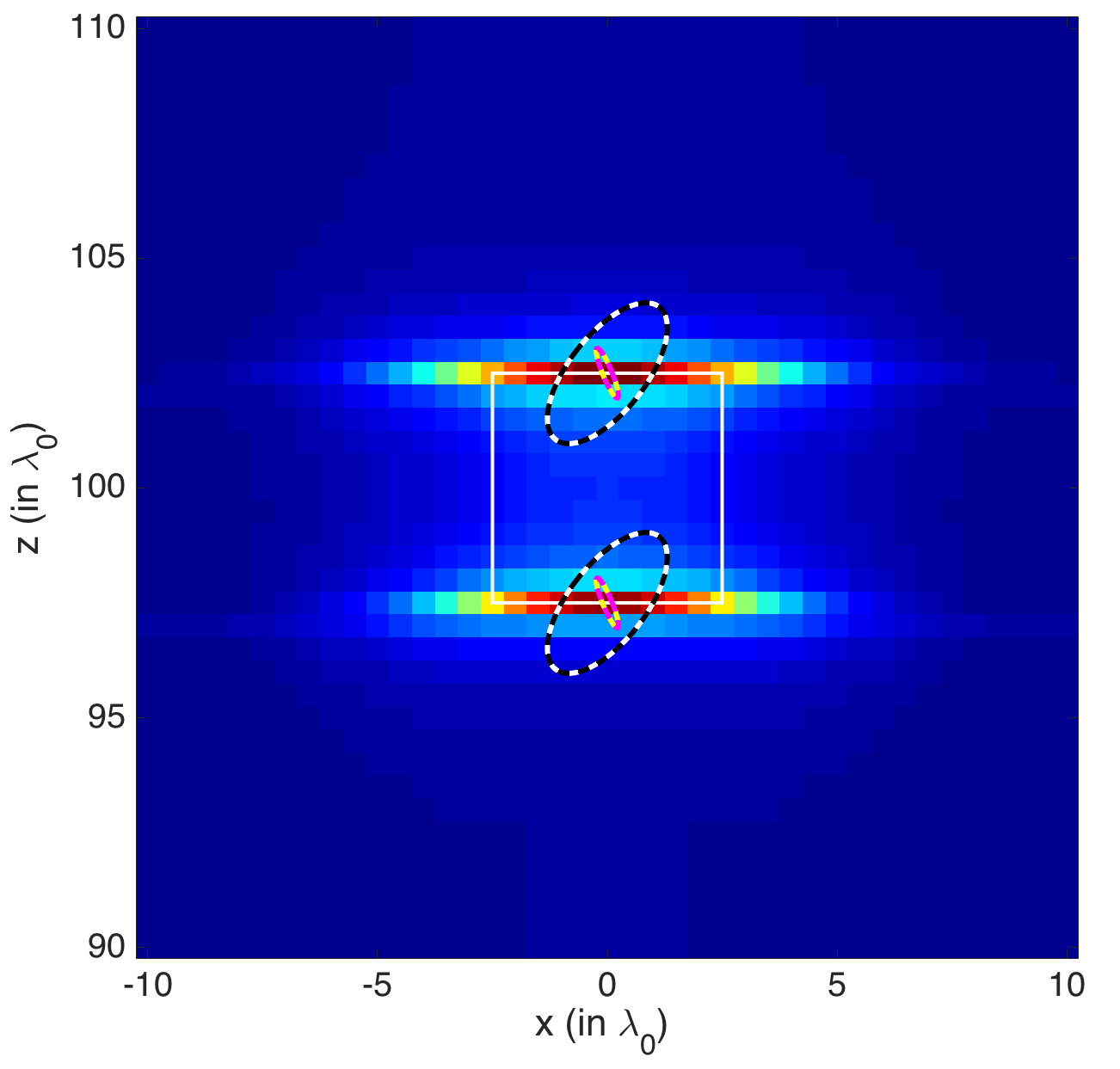}
\end{center}
\caption{Image of $\|\bGa_{1:2,1:2}\|$ (color scale) in the plane $z=L$
(left) and in the plane $y=0$ (right). Visualization of
$\operatorname{Re}(e^{\mi \theta} \bGa_{1:2,1:2})$ with ellipses (white
ones for the exact tensor and black ones for the reconstructed one) and
of $\operatorname{Im}(e^{\mi \theta} \bGa_{1:2,1:2})$ (yellow ellipses
for the exact tensor and purple ones for the reconstructed one). These
ellipses are represented at the center of the cube (left) and at the
centers of the $z=\pm2.5\lambda_0$ faces (right).}
\label{fig.cubictarget1}
\end{figure}

\begin{figure}[h!]
\begin{center}
\includegraphics[width=6.1cm]{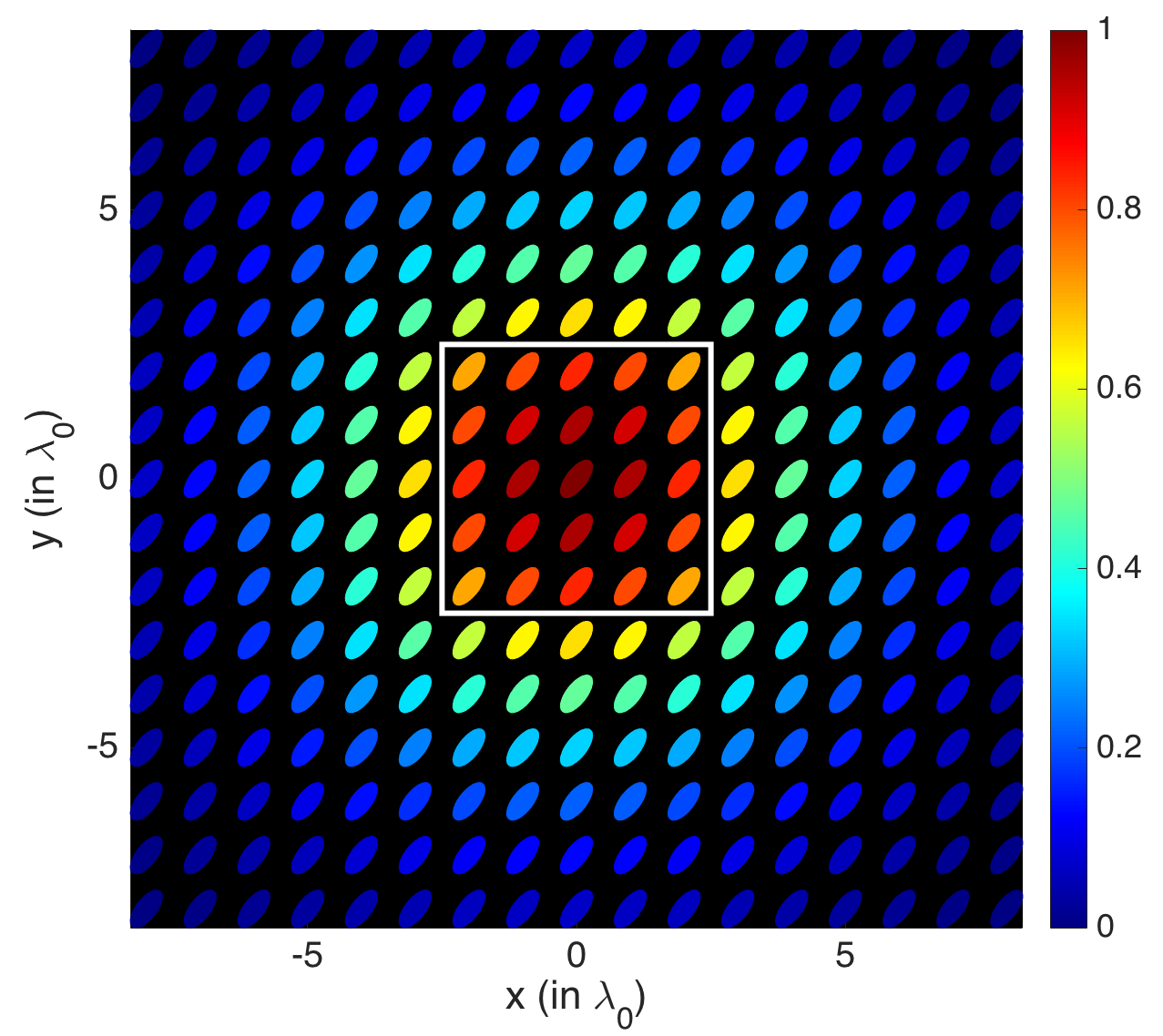}
\hspace{0.1cm}
\includegraphics[width=6.0cm]{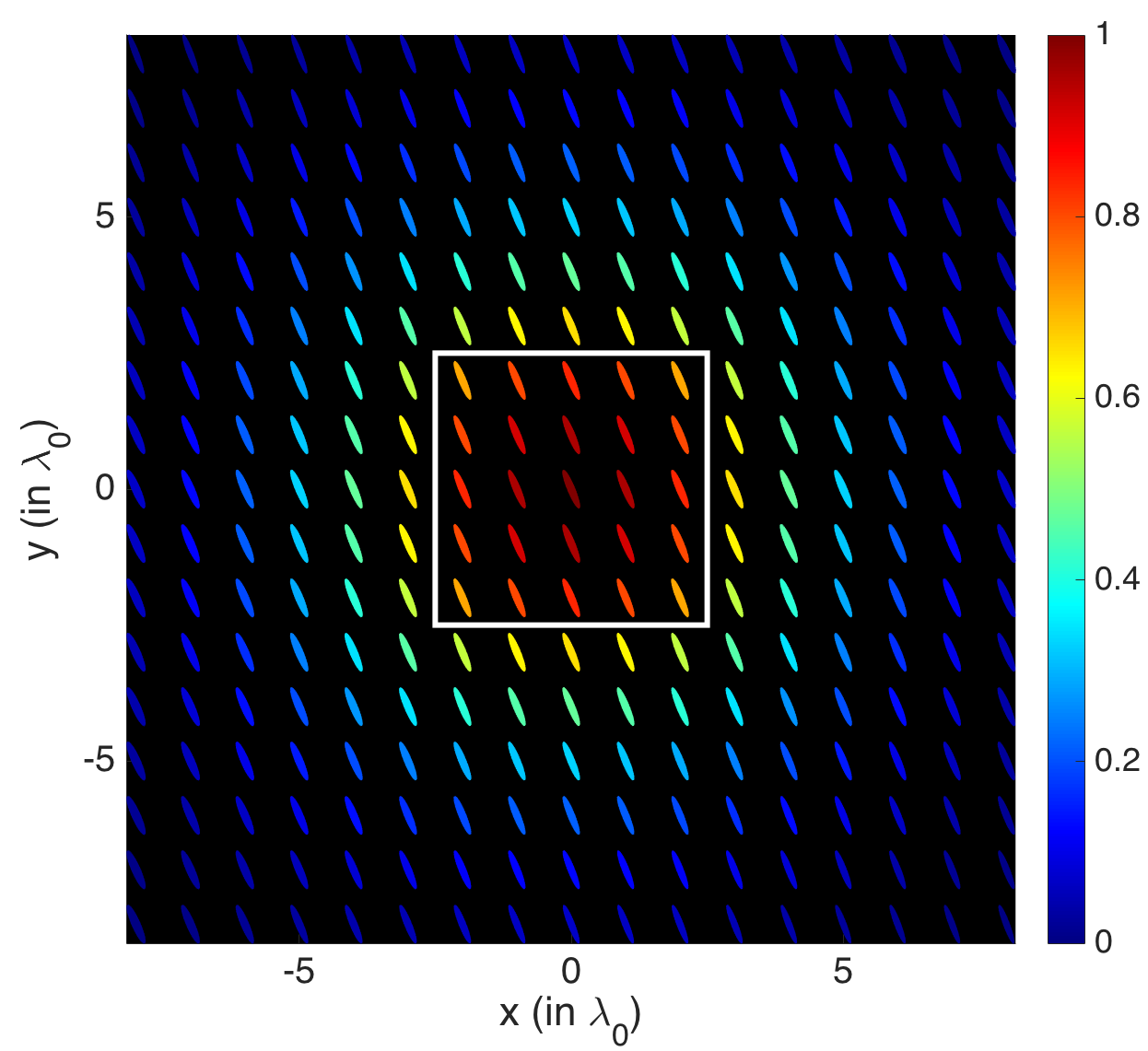}
\end{center}
\caption{Visualization of
$\operatorname{Re}((\overline{\bGa_{1,1}}/|\bGa_{1,1}|) \,
\bGa_{1:2,1:2})$ (left) and
$\operatorname{Im}((\overline{\bGa_{1,1}}/|\bGa_{1,1}|) \,
\bGa_{1:2,1:2})$ (right) in the plane $z=0$.}
\label{fig.cubictarget2}
\end{figure}

\begin{figure}[h!]
\begin{center}
\includegraphics[width=6.1cm]{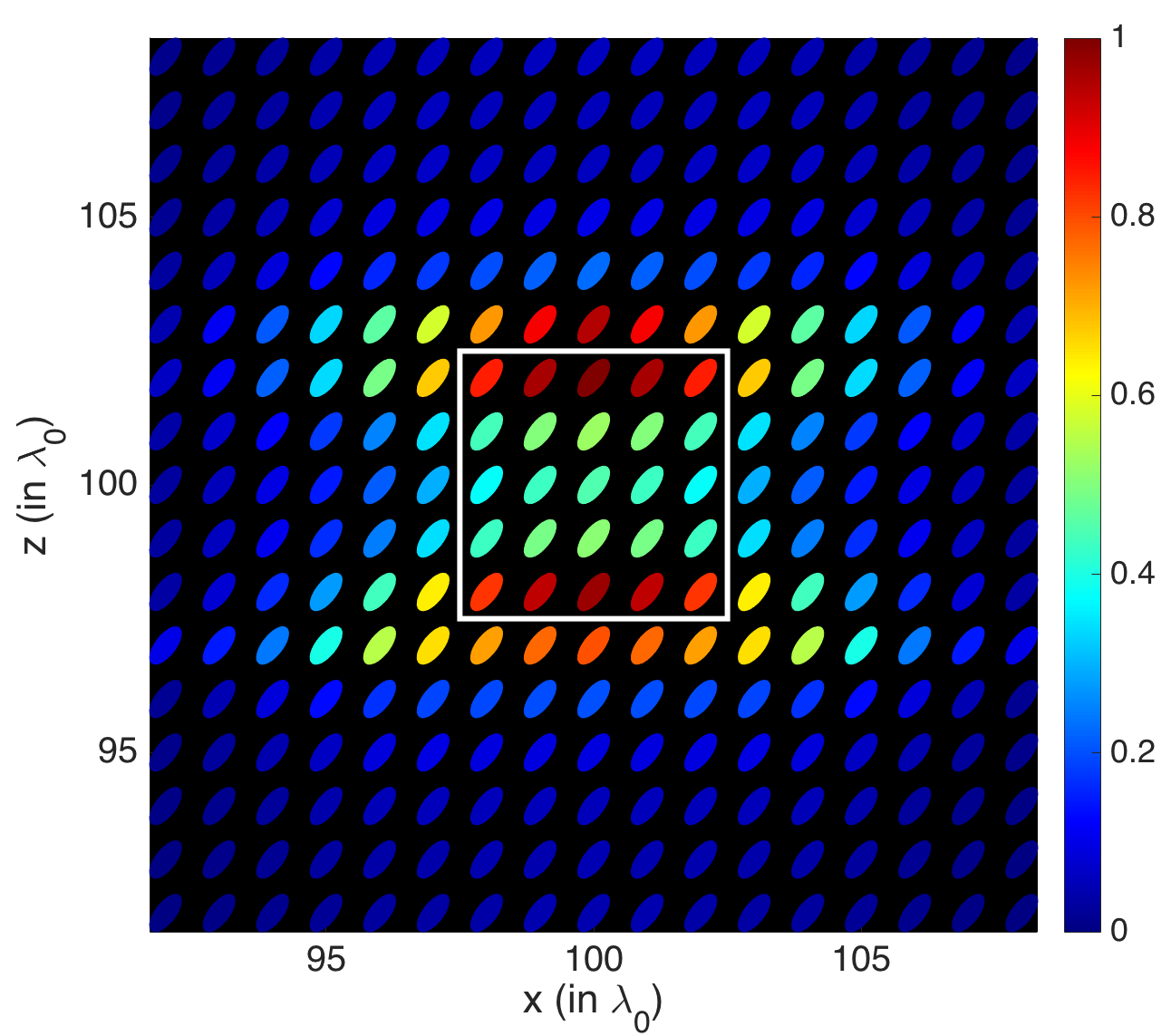}
\hspace{0.1cm}
\includegraphics[width=6.cm]{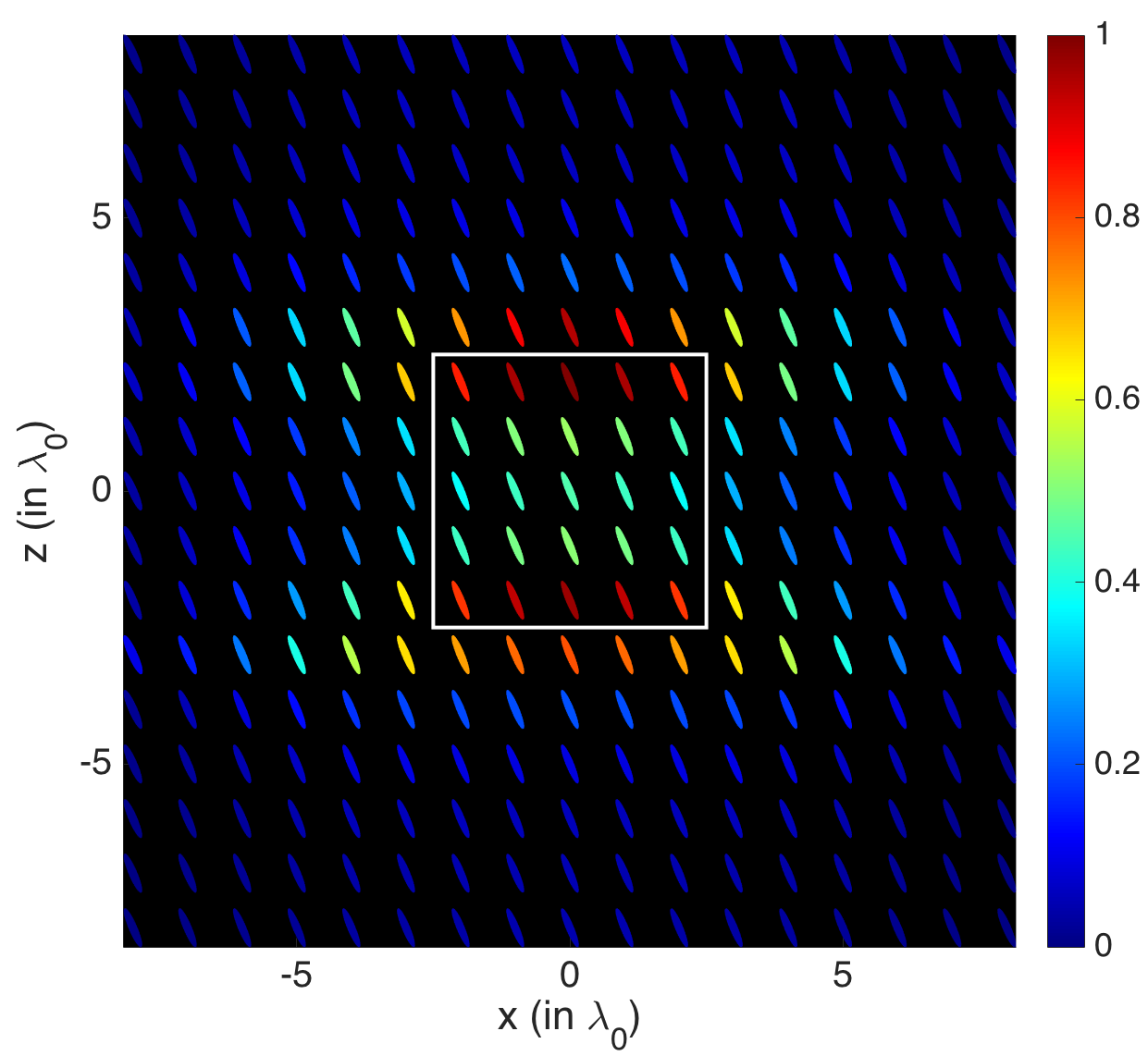}
\end{center}
\caption{Visualization of
$\operatorname{Re}((\overline{\bGa_{1,1}}/|\bGa_{1,1}|) \,
\bGa_{1:2,1:2})$ (left) and
$\operatorname{Im}((\overline{\bGa_{1,1}}/|\bGa_{1,1}|) \,
\bGa_{1:2,1:2})$ (right) in the plane $y=0$.}
\label{fig.cubictarget3}
\end{figure}

\section{Summary and future work}
\label{sec:future}

We used the Fraunhofer asymptotic to study an electromagnetic version of
the Kirchhoff imaging function for both the passive and active imaging
problems. The images we obtain are vector (resp. matrix) valued in the
passive (resp. active case). The norm of these images behaves like the
images in acoustics, meaning that we get identical resolution estimates
for the position of well-separated dipoles (or scatterers) as those we
would obtain in acoustics. The vector (or matrix) valued image contains
information about the polarization vector (or polarizability tensor) of
the point sources (or small scatterers) in the medium. We show how to
extract this information to stably image these quantities. Our
asymptotic reveals that we can only expect to stably recover the
cross-range components of these quantities. The range components are
lost.

This is a first step to understand what quantities can be imaged in an
idealized case. Indeed, the data we used may be complicated to acquire
in practice, as we need to measure at least the two cross-range
components of the electric field in the passive case. In the active case
we also need experiments where one can control the cross-range
polarization vector components of the array sources. 

We are currently adapting this imaging technique to a case where only
cross-correlations of the electric field are available at the array,
i.e. we assume we can only measure the {\em electric coherence matrix} 
\[
C ( \vx_r, \vx_s, \tau)_{i,j} = \left[ \left\langle E_i^*(\vx_r,t) E_j
(\vx_s,t+\tau) \right\rangle \right]_{i,j}, \mbox{ for } i,j=1,2
\] 
for all points $\vx_r, \vx_s$ in the array. Here
$\langle\,\cdot\,\rangle$ represents averaging over realizations of the
electromagnetic field (which is assumed stationary so there is no
dependence on $t$).  This data is equivalent to knowing the {\em Stokes
parameters} (see e.g. \cite{Mandel:OCQ:1995}), which characterize the
polarization of electromagnetic waves.  The technique we plan to use is a
generalization of the imaging with cross-correlations method for
acoustics in \cite{Bardsley:2016:KIW} to the Maxwell equations.

\appendix

\section{Proof of lemma  \ref{lem.crossrange1}}
\label{app:cross:passive}

\begin{proof}
We consider a point $\vvy=(\vy, L+\eta)$ with $\vy=(y_1,y_2)$.
In cylindrical coordinates the array is given by
\[
\cA:=\Mcb{ \vvx_r=(r,\theta, 0) \in \real^3 \mid  0\leq r\leq a \, \mbox{ and } \, \theta \in [0,2\pi)}.
\]
Thus,  it leads to the following expression of $\widetilde{\mH}(\vvy_*,\vvy_*;k)$ in cylindrical coordinates:
\begin{eqnarray} \label{eq.inttermprinc}
\widetilde{\mH}(\vvy_*,\vvy_*;k)&=&\frac{1}{(4\pi L)^2}\int_{0}^{a}r \md r
\int_{0}^{2\pi} \md \theta\, \mP(\vvx_r,\vvy_*)  \nonumber\\
&=&\frac{  a^2 }{16 \pi L^2} \mI -\frac{ 1}{(4\pi L)^2}\int_{0}^{a} r \md r \int_{0}^{2\pi}
\md \theta\,
\frac{(\vvx_r-\vvy_*)(\vvx_r-\vvy_*)^{\tr}}{\|\vvx_r-\vvy_*\|^2}
\end{eqnarray}
with
$$\vvx_r-\vvy_*=(r \cos(\theta)-y_{1,*}, r\sin(\theta)-y_{2,*},-L-\eta_*).$$
Integrating in $\theta$ and using the Fraunhofer asymptotic to simplify
the resulting expression and  finally integrating in $r$ leads to
\begin{eqnarray} \displaystyle \label{eq.projpolar}
&&\frac{1}{(4\pi L)^2}\int_{0}^{a} r \md r \int_{0}^{2\pi}\md \theta \,
\frac{(\vvx_r-\vvy_*)(\vvx_r-\vvy_*)^{\tr}}{\|\vvx_r-\vvy_*\|^2} \\ [5pt]
 &&=   \frac{a^2}{L^2}\begin{pmatrix}
\displaystyle  \cO\left(\frac{a^2}{L^2} \right) &\displaystyle
\cO\left(\frac{b^2}{L^2} \right)+\cO\left(\frac{a^4}{L^4} \right)&
\displaystyle \cO\left(\frac{b}{L} \right)+\cO\left(\frac{a^3}{L^3} \right)\\[10pt]
 \displaystyle  \cO\left(\frac{b^2}{L^2} \right)+\cO\left(\frac{a^4}{L^4}
 \right)  &\displaystyle  \cO\left(\frac{a^2}{L^2} \right)  &
 \displaystyle \cO\left(\frac{b}{L} \right)+\cO\left(\frac{a^3}{L^3}\right)  \\[10pt]
 \displaystyle \cO\left(\frac{b}{L} \right)+\cO\left(\frac{a^3}{L^3}
 \right)&  \displaystyle \cO\left(\frac{b}{L}
 \right)+\cO\left(\frac{a^3}{L^3} \right) & \displaystyle
 \frac{1}{16\pi}+ \cO\left(\frac{h}{L} \right)+ \cO\left(\frac{a^2}{L^2} \right)\nonumber 
\end{pmatrix}.
\end{eqnarray}
The formula (\ref{eq.asymp}) follows by combining
(\ref{eq.inttermprinc}), (\ref{eq.projpolar}) and using the
relation: $\cO(ha^2/L^3)=\cO( (k h a^4)/(L^4 \Theta_a))=o( a^4/L^4)$.
\end{proof}

\section{Polarizability tensors used in numerical experiments}
\label{app:matrices}

 In section~\ref{sec:numexp:active}, the polarizability tensors that we
 used are the $3 \times 3$ complex symmetric matrices given by
 \[
 \begin{aligned}
  \bGa_1 &= \begin{bmatrix}
   2 + \mi &  1 & 0\\
   1 &  2 + 2\mi & 0\\
   0 & 0 &(1+\mi)/2
   \end{bmatrix},
   ~
   \bGa_2 = \begin{bmatrix}
   2 + 2\mi & - 1\mi & 1/2\\
   -1\mi & 1+\mi & 0\\
   1/2 & 0 & 1
   \end{bmatrix}\\
   \text{and}
   ~
   \bGa_3 &= \begin{bmatrix}
   1 + 2\mi & 1 & \mi/2\\
   1 & 3+ 2\mi & 0\\
   \mi/2 & 0 &  \mi/2
   \end{bmatrix}.
 \end{aligned}
\] 

\section*{Acknowledgements}
The work of M. Cassier and F. Guevara Vasquez was partially supported by
the National Science Foundation grant DMS-1411577. FGV would like to
thank Arnold D. Kim for inspiring conversations about this problem.

\bibliographystyle{abbrvnat}
\bibliography{polbib}

\begin{thebibliography}{27}
\providecommand{\natexlab}[1]{#1}
\providecommand{\url}[1]{\texttt{#1}}
\expandafter\ifx\csname urlstyle\endcsname\relax
  \providecommand{\doi}[1]{doi: #1}\else
  \providecommand{\doi}{doi: \begingroup \urlstyle{rm}\Url}\fi

\bibitem[Ammari and Kang(2003)]{Ammari::2003}
H.~Ammari and H.~Kang.
\newblock Properties of the generalized polarization tensors.
\newblock \emph{Multiscale Modeling and Simulation}, 1\penalty0 (2):\penalty0
  335--348, 2003.
\newblock \doi{10.1137/S1540345902404551}.

\bibitem[Ammari and Kang(2007)]{Ammari::2007}
H.~Ammari and H.~Kang.
\newblock \emph{Polarization and moment tensors}, volume 162 of \emph{Applied
  Mathematical Sciences}.
\newblock Springer, New York, 2007.
\newblock ISBN 978-0-387-71565-0.
\newblock \doi{10.1007/978-0-387-71566-7}.
\newblock With applications to inverse problems and effective medium theory.

\bibitem[Ammari and Khelifi(2003)]{Ammari::2003:ESD}
H.~Ammari and A.~Khelifi.
\newblock Electromagnetic scattering by small dielectric inhomogeneities.
\newblock \emph{Journal de math{\'e}matiques pures et appliqu{\'e}es},
  82\penalty0 (7):\penalty0 749--842, 2003.
\newblock \doi{10.1016/S0021-7824(03)00033-3}.

\bibitem[Ammari et~al.(2001)Ammari, Vogelius, and Volkov]{Am::AFP:2001}
H.~Ammari, M.~S. Vogelius, and D.~Volkov.
\newblock Asymptotic formulas for perturbations in the electromagnetic fields
  due to the presence of inhomogeneities of small diameter {II}. {T}he full
  {M}axwell equations.
\newblock \emph{Journal de math{\'e}matiques pures et appliqu{\'e}es},
  80\penalty0 (8):\penalty0 769--814, 2001.
\newblock \doi{10.1016/S0021-7824(01)01217-X}.

\bibitem[Ammari et~al.(2013)Ammari, Garnier, Jing, Kang, Lim, S\o~lna, and
  Wang]{Ammari::2013}
H.~Ammari, J.~Garnier, W.~Jing, H.~Kang, M.~Lim, K.~S\o~lna, and H.~Wang.
\newblock \emph{Mathematical and statistical methods for multistatic imaging},
  volume 2098 of \emph{Lecture Notes in Mathematics}.
\newblock Springer, Cham, 2013.
\newblock ISBN 978-3-319-02584-1; 978-3-319-02585-8.
\newblock \doi{10.1007/978-3-319-02585-8}.

\bibitem[Ammari et~al.(2014)Ammari, Chen, Chen, Garnier, and
  Volkov]{Ammari::2014:TCE}
H.~Ammari, J.~Chen, Z.~Chen, J.~Garnier, and D.~Volkov.
\newblock Target detection and characterization from electromagnetic induction
  data.
\newblock \emph{Journal de math{\'e}matiques pures et appliqu{\'e}es},
  101\penalty0 (1):\penalty0 54--75, 2014.
\newblock \doi{10.1016/j.matpur.2013.05.002}.

\bibitem[Bardsley and Guevara~Vasquez(2016)]{Bardsley:2016:KIW}
P.~Bardsley and F.~Guevara~Vasquez.
\newblock Kirchhoff migration without phases.
\newblock \emph{Inverse Problems}, 32\penalty0 (10):\penalty0 105006, 2016.
\newblock \doi{10.1088/0266-5611/32/10/105006}.

\bibitem[Bleistein et~al.(2001)Bleistein, Cohen, and Stockwell]{Blei:2013:MSI}
N.~Bleistein, J.~K. Cohen, and J.~W. Stockwell, Jr.
\newblock \emph{Mathematics of multidimensional seismic imaging, migration, and
  inversion}, volume~13 of \emph{Interdisciplinary Applied Mathematics}.
\newblock Springer-Verlag, New York, 2001.
\newblock ISBN 0-387-95061-3.
\newblock \doi{10.1007/978-1-4613-0001-4}.
\newblock Geophysics and Planetary Sciences.

\bibitem[Borcea et~al.(2007)Borcea, Papanicolaou, and Tsogka]{Borcea:2007:OWD}
L.~Borcea, G.~Papanicolaou, and C.~Tsogka.
\newblock Optimal waveform design for array imaging.
\newblock \emph{Inverse Problems}, 23\penalty0 (5):\penalty0 1973--2020, 2007.
\newblock ISSN 0266-5611.
\newblock \doi{10.1088/0266-5611/23/5/011}.

\bibitem[Borcea et~al.(2008)Borcea, Papanicolaou, and {Guevara
  Vasquez}]{Borcea:2008:EII}
L.~Borcea, G.~Papanicolaou, and F.~{Guevara Vasquez}.
\newblock Edge illumination and imaging of extended reflectors.
\newblock \emph{SIAM Journal on Imaging Sciences}, 1\penalty0 (1):\penalty0
  75--114, 2008.
\newblock \doi{10.1137/07069290X}.

\bibitem[Born and Wolf(1959)]{Born:1959:POE}
M.~Born and E.~Wolf.
\newblock \emph{Principles of optics: {E}lectromagnetic theory of propagation,
  interference and diffraction of light}.
\newblock Pergamon Press, New York-London-Paris-Los Angeles, 1959.
\newblock ISBN 0521642221.

\bibitem[Br\"uhl et~al.(2003)Br\"uhl, Hanke, and Vogelius]{Bruhl:DIT:2003}
M.~Br\"uhl, M.~Hanke, and M.~S. Vogelius.
\newblock A direct impedance tomography algorithm for locating small
  inhomogeneities.
\newblock \emph{Numer. Math.}, 93\penalty0 (4):\penalty0 635--654, 2003.
\newblock ISSN 0029-599X.
\newblock \doi{10.1007/s002110200409}.

\bibitem[Cassier and Hazard(2014)]{Cass:2014:SFU}
M.~Cassier and C.~Hazard.
\newblock Space-time focusing on unknow scatterers.
\newblock \emph{Wave Motion}, 51\penalty0 (8):\penalty0 1254--1272, 2014.
\newblock \doi{10.1016/j.wavemoti.2014.07.009}.

\bibitem[Cedio-Fengya et~al.(1998)Cedio-Fengya, Moskow, and
  Vogelius]{Ced::ICI:1998}
D.~J. Cedio-Fengya, S.~Moskow, and M.~S. Vogelius.
\newblock Identification of conductivity imperfections of small diameter by
  boundary measurements.continuous dependence and computational reconstruction.
\newblock \emph{Inverse problems}, 14\penalty0 (3):\penalty0 553--595, 1998.
\newblock \doi{10.1088/0266-5611/14/3/011}.

\bibitem[Cheney and Borden(2009)]{Cheney:2009:FRI}
M.~Cheney and B.~Borden.
\newblock \emph{Fundamentals of radar imaging}, volume~79 of \emph{CBMS-NSF
  Regional Conference Series in Applied Mathematics}.
\newblock Society for Industrial and Applied Mathematics (SIAM), Philadelphia,
  PA, 2009.
\newblock ISBN 978-0-898716-77-1.
\newblock \doi{10.1137/1.9780898719291}.

\bibitem[Colton and Kress(2013)]{Colton:2013:IAE}
D.~Colton and R.~Kress.
\newblock \emph{Inverse acoustic and electromagnetic scattering theory},
  volume~93 of \emph{Applied Mathematical Sciences}.
\newblock Springer, New York, third edition, 2013.
\newblock ISBN 978-1-4614-4941-6; 978-1-4614-4942-3.
\newblock \doi{10.1007/978-1-4614-4942-3}.

\bibitem[Devaney(2012)]{Devaney:2012:MFI}
A.~J. Devaney.
\newblock \emph{Mathematical foundations of imaging, tomography and wavefield
  inversion}.
\newblock Cambridge University Press, Cambridge, 2012.
\newblock ISBN 978-0-521-11974-0.
\newblock \doi{10.1017/CBO9781139047838}.
\newblock URL \url{http://dx.doi.org/10.1017/CBO9781139047838}.

\bibitem[Mandel and Wolf(1995)]{Mandel:OCQ:1995}
L.~Mandel and E.~Wolf.
\newblock \emph{Optical coherence and quantum optics}.
\newblock Cambridge {U}niversity {P}ress, 1995.
\newblock ISBN 978-0-521-41711-2.

\bibitem[Marengo and Devaney(1999)]{Marengo:1999:ISP}
E.~A. Marengo and A.~J. Devaney.
\newblock The inverse source problem of electromagnetics: Linear inversion
  formulation and minimum energy solution.
\newblock \emph{IEEE Transactions on Antennas and Propagation}, 47\penalty0
  (2):\penalty0 410--412, 1999.
\newblock \doi{10.1109/8.761085}.

\bibitem[Milton(2002)]{Milton}
G.~W. Milton.
\newblock \emph{The theory of composites}, volume~6 of \emph{Cambridge
  Monographs on Applied and Computational Mathematics}.
\newblock Cambridge University Press, Cambridge, 2002.
\newblock ISBN 978-0521781251.
\newblock \doi{10.1017/CBO9780511613357}.

\bibitem[Novikov et~al.(2015)Novikov, Moscoso, and Papanicolaou]{Nov::15:ISI}
A.~Novikov, M.~Moscoso, and G.~Papanicolaou.
\newblock Illumination strategies for intensity-only imaging.
\newblock \emph{SIAM Journal on Imaging Sciences}, 8\penalty0 (3):\penalty0
  1547--1573, 2015.
\newblock \doi{10.1137/140994617}.

\bibitem[Novotny and Hecht(2012)]{Novotny:2012:PNO}
L.~Novotny and B.~Hecht.
\newblock \emph{Principles of nano-optics}.
\newblock Cambridge {U}niversity {P}ress, 2012.
\newblock ISBN 978-0-521-83224-3.

\bibitem[Ola et~al.(1993)Ola, P\"aiv\"arinta, and Somersalo]{Ola:1993:IBV}
P.~Ola, L.~P\"aiv\"arinta, and E.~Somersalo.
\newblock An inverse boundary value problem in electrodynamics.
\newblock \emph{Duke Math. J.}, 70\penalty0 (3):\penalty0 617--653, 1993.
\newblock ISSN 0012-7094.
\newblock \doi{10.1215/S0012-7094-93-07014-7}.

\bibitem[Peyr\'e(2009)]{Peyre}
G.~Peyr\'e.
\newblock Toolbox image, {MATLAB} {C}entral.
\newblock
  \url{https://www.mathworks.com/matlabcentral/fileexchange/16201-toolbox-image},
  2009.
\newblock Accessed: March 8 2017.

\bibitem[Prada and Fink(1994)]{Pra:94:ETR}
C.~Prada and M.~Fink.
\newblock Eigenmodes of the time reversal operator: a solution to selective
  focusing in multiple-target media.
\newblock \emph{Wave Motion}, 20\penalty0 (2):\penalty0 151--163, 1994.
\newblock \doi{10.1016/0165-2125(94)90039-6}.

\bibitem[Vogelius and Volkov(2000)]{Vog::AFP:2000}
M.~S. Vogelius and D.~Volkov.
\newblock Asymptotic formulas for perturbations in the electromagnetic fields
  due to the presence of inhomogeneities of small diameter.
\newblock \emph{ESAIM: Mathematical Modelling and Numerical Analysis},
  34\penalty0 (4):\penalty0 723--748, 2000.
\newblock \doi{10.1051/m2an:2000101}.

\bibitem[Wong(1989)]{Wong:2014:AAI}
R.~Wong.
\newblock \emph{Asymptotic approximations of integrals}.
\newblock Computer Science and Scientific Computing. Academic Press, Inc.,
  Boston, MA, 1989.
\newblock ISBN 0-12-762535-6.

\end{thebibliography}

\end{document}